\documentclass[10pt, a4paper, reqno]{amsart}

\usepackage[utf8]{inputenc}
\usepackage{a4wide}

\usepackage{amssymb, amsmath, amsthm}
\usepackage{mathtools}
\usepackage{thmtools}
\usepackage{enumitem}
\usepackage[dvipsnames]{xcolor}
\usepackage{ifthen}
\usepackage{tikz}
	\usetikzlibrary{positioning}
\usepackage[colorlinks=true, citecolor=Green, linkcolor=Red]{hyperref}
\usepackage{orcidlink}
\usepackage{bm}
\usepackage{stmaryrd}

\newcommand{\Fcal}{\mathcal{F}}

\newcommand{\Hcal}{\mathcal{H}}

\newcommand{\Pcal}{\mathcal{P}}

\newcommand{\Scal}{\mathcal{S}}

\newcommand{\Xbb}{\mathbb{X}}

\newcommand{\Nsf}{\mathsf{N}}

\newcommand{\Rsf}{\mathsf{R}}

\newcommand{\ta}{
  \mathchoice
    {\taaux{1.0}{true}{0pt}}
    {\taaux{1.0}{true}{0pt}}
    {\taaux{0.7}{false}{0.5ex}}
    {\taaux{0.5}{false}{0.5ex}}
}

\newcommand{\taaux}[3]{%
  \mathord{%
    \ifthenelse{\equal{#2}{true}}{\vcenter}{\raisebox{#3}}{%
      \hbox{%
        \scalebox{#1}{%
          \tikz[baseline=(X.base)]{
            \node (X) at (0,0) {};
            \draw[fill=black] (0,0) circle[radius=0.04];
            \node[font=\tiny, inner sep=0pt, outer sep=0pt] at (0,-0.18) {$\alpha$};
          }%
        }%
      }%
    }%
  }%
}

\newcommand{\tao}{
  \mathchoice
    {\taoaux{1.0}{true}{0pt}}
    {\taoaux{1.0}{true}{0pt}}
    {\taoaux{0.7}{false}{0.5ex}}
    {\taoaux{0.5}{false}{0.5ex}}
}

\newcommand{\taoaux}[3]{%
  \mathord{%
    \ifthenelse{\equal{#2}{true}}{\vcenter}{\raisebox{#3}}{%
      \hbox{%
        \scalebox{#1}{%
          \tikz[baseline=(X.base)]{
            \node (X) at (0,0) {};
            \draw[fill=black] (0,0) circle[radius=0.04];
            \node[font=\tiny, inner sep=0pt, outer sep=0pt] at (0,-0.18) {$ $};
          }%
        }%
      }%
    }%
  }%
}

\newcommand{\tazero}{
  \mathchoice
    {\tazeroaux{1.0}{true}{0pt}}
    {\tazeroaux{1.0}{true}{0pt}}
    {\tazeroaux{0.7}{false}{0.5ex}}
    {\tazeroaux{0.5}{false}{0.5ex}}
}

\newcommand{\tazeroaux}[3]{%
  \mathord{%
    \ifthenelse{\equal{#2}{true}}{\vcenter}{\raisebox{#3}}{%
      \hbox{%
        \scalebox{#1}{%
          \tikz[baseline=(X.base)]{
            \node (X) at (0,0) {};
            \draw[fill=black] (0,0) circle[radius=0.04];
            \node[font=\tiny, inner sep=0pt, outer sep=0pt] at (0,-0.18) {$\alpha_0$};
          }%
        }%
      }%
    }%
  }%
}

\newcommand{\tb}{%
  \mathchoice
    {\tbaux{1.0}{true}{0pt}}
    {\tbaux{1.0}{true}{0pt}}
    {\tbaux{0.7}{false}{0.5ex}}
    {\tbaux{0.5}{false}{0.5ex}}
}

\newcommand{\tbaux}[3]{%
  \mathord{%
    \ifthenelse{\equal{#2}{true}}{\vcenter}{\raisebox{#3}}{%
      \hbox{%
        \scalebox{#1}{%
          \tikz[baseline=(X.base)]{
            \node (X) at (0,0) {};
            \draw[fill=black] (0,0) circle[radius=0.04];
            \node[font=\tiny, inner sep=0pt, outer sep=0pt] at (0,-0.18) {$\beta$};
          }%
        }%
      }%
    }%
  }%
}

\newcommand{\tab}{%
  \mathchoice
    {\tabaux{1.0}{true}{0pt}}
    {\tabaux{1.0}{true}{0pt}}
    {\tabaux{0.7}{false}{0.5ex}}
    {\tabaux{0.5}{false}{0.5ex}}
}

\newcommand{\tabaux}[3]{%
  \mathord{%
    \ifthenelse{\equal{#2}{true}}{\vcenter}{\raisebox{#3}}{%
      \hbox{%
        \scalebox{#1}{%
          \tikz[baseline=(X.base)]{
            \node (X) at (0,0) {};
            \draw[fill=black] (0,0) circle[radius=0.04];
            \draw[fill=black] (0.20,0) circle[radius=0.04];
            \node[font=\tiny, inner sep=0pt, outer sep=0pt] at (0,-0.18) {$\alpha$};
            \node[font=\tiny, inner sep=0pt, outer sep=0pt] at (0.20,-0.18) {$\beta$};
          }%
        }%
      }%
    }%
  }%
}

\newcommand{\taboo}{%
  \mathchoice
    {\tabooaux{1.0}{true}{0pt}}
    {\tabooaux{1.0}{true}{0pt}}
    {\tabooaux{0.7}{false}{0.5ex}}
    {\tabooaux{0.5}{false}{0.5ex}}
}

\newcommand{\tabooaux}[3]{%
  \mathord{%
    \ifthenelse{\equal{#2}{true}}{\vcenter}{\raisebox{#3}}{%
      \hbox{%
        \scalebox{#1}{%
          \tikz[baseline=(X.base)]{
            \node (X) at (0,0) {};
            \draw[fill=black] (0,0) circle[radius=0.04];
            \draw[fill=black] (0.20,0) circle[radius=0.04];
            \node[font=\tiny, inner sep=0pt, outer sep=0pt] at (0,-0.18) {$ $};
            \node[font=\tiny, inner sep=0pt, outer sep=0pt] at (0.20,-0.18) {$ $};
          }%
        }%
      }%
    }%
  }%
}

\newcommand{\tba}{%
  \mathchoice
    {\tbaaux{1.0}{true}{0pt}}
    {\tbaaux{1.0}{true}{0pt}}
    {\tbaaux{0.7}{false}{0.5ex}}
    {\tbaaux{0.5}{false}{0.5ex}}
}

\newcommand{\tbaaux}[3]{%
  \mathord{%
    \ifthenelse{\equal{#2}{true}}{\vcenter}{\raisebox{#3}}{%
      \hbox{%
        \scalebox{#1}{%
          \tikz[baseline=(X.base)]{
            \node (X) at (0,0) {};
            \draw[fill=black] (0,0) circle[radius=0.04];
            \draw[fill=black] (0.20,0) circle[radius=0.04];
            \node[font=\tiny, inner sep=0pt, outer sep=0pt] at (0,-0.18) {$\beta$};
            \node[font=\tiny, inner sep=0pt, outer sep=0pt] at (0.20,-0.18) {$\alpha$};
          }%
        }%
      }%
    }%
  }%
}

\newcommand{\taazero}{%
  \mathchoice
    {\taazeroaux{1.0}{true}{0pt}}
    {\taazeroaux{1.0}{true}{0pt}}
    {\taazeroaux{0.7}{false}{0.5ex}}
    {\taazeroaux{0.5}{false}{0.5ex}}
}

\newcommand{\taazeroaux}[3]{%
  \mathord{%
    \ifthenelse{\equal{#2}{true}}{\vcenter}{\raisebox{#3}}{%
      \hbox{%
        \scalebox{#1}{%
          \tikz[baseline=(X.base)]{
            \node (X) at (0,0) {};
            \draw[fill=black] (0,0) circle[radius=0.04];
            \draw[fill=black] (0.20,0) circle[radius=0.04];
            \node[font=\tiny, inner sep=0pt, outer sep=0pt] at (0,-0.18) {$\alpha_0$};
            \node[font=\tiny, inner sep=0pt, outer sep=0pt] at (0.30,-0.18) {$\alpha_0$};
          }%
        }%
      }%
    }%
  }%
}

\newcommand{\tjd}{%
  \mathchoice
    {\tjdaux{1.0}{true}{0pt}}
    {\tjdaux{1.0}{true}{0pt}}
    {\tjdaux{0.7}{false}{0.5ex}}
    {\tjdaux{0.5}{false}{0.5ex}}
}

\newcommand{\tjdaux}[3]{%
  \mathord{%
    \ifthenelse{\equal{#2}{true}}{\vcenter}{\raisebox{#3}}{%
      \hbox{%
        \scalebox{#1}{%
          \tikz[baseline=(X.base)]{
            \node (X) at (0,0) {};
            \draw[fill=black] (0,0) circle[radius=0.04];
            \draw[fill=black] (0.20,0) circle[radius=0.04];
            \node[font=\tiny, inner sep=0pt, outer sep=0pt] at (0,-0.18) {$1$};
            \node[font=\tiny, inner sep=0pt, outer sep=0pt] at (0.20,-0.18) {$2$};
          }%
        }%
      }%
    }%
  }%
}

\newcommand{\tc}{%
  \mathchoice
    {\tcaux{1.0}{true}}
    {\tcaux{1.0}{true}}
    {\tcaux{0.7}{false}}
    {\tcaux{0.5}{false}}
}

\newcommand{\tcaux}[2]{%
  \mathord{%
    \ifthenelse{\equal{#2}{true}}{\vcenter}{ }{%
      \hbox{%
        \scalebox{#1}{%
          \tikz[baseline=(X.base)]{
            \node (X) at (0,0) {};
            \draw[fill=black] (0,0.10) circle[radius=0.04] -- (0,-0.10) circle[radius=0.04];
            \node[font=\tiny, inner sep=0pt, outer sep=0pt, right] at (0.08,0.10) {$\alpha$};
            \node[font=\tiny, inner sep=0pt, outer sep=0pt, right] at (0.08,-0.10) {$\beta$};
          }%
        }%
      }%
    }%
  }%
}

\newcommand{\td}{%
  \mathchoice
    {\tdaux{1.0}{true}}
    {\tdaux{1.0}{true}}
    {\tdaux{0.7}{false}}
    {\tdaux{0.5}{false}}
}

\newcommand{\tdaux}[2]{%
  \mathord{%
    \ifthenelse{\equal{#2}{true}}{\vcenter}{ }{%
      \hbox{%
        \scalebox{#1}{%
          \tikz[baseline=(X.base)]{
            \node (X) at (0,0) {};
            \draw[fill=black] (0,0.10) circle[radius=0.04] -- (0,-0.10) circle[radius=0.04];
            \node[font=\tiny, inner sep=0pt, outer sep=0pt, right] at (0.08,0.10) {$\beta$};
            \node[font=\tiny, inner sep=0pt, outer sep=0pt, right] at (0.08,-0.10) {$\alpha$};
          }%
        }%
      }%
    }%
  }%
}

\newcommand{\tcjd}{%
  \mathchoice
    {\tcjdaux{1.0}{true}}
    {\tcjdaux{1.0}{true}}
    {\tcjdaux{0.7}{false}}
    {\tcjdaux{0.5}{false}}
}

\newcommand{\tcjdaux}[2]{%
  \mathord{%
    \ifthenelse{\equal{#2}{true}}{\vcenter}{ }{%
      \hbox{%
        \scalebox{#1}{%
          \tikz[baseline=(X.base)]{
            \node (X) at (0,0) {};
            \draw[fill=black] (0,0.10) circle[radius=0.04] -- (0,-0.10) circle[radius=0.04];
            \node[font=\tiny, inner sep=0pt, outer sep=0pt, right] at (0.08,0.10) {$1$};
            \node[font=\tiny, inner sep=0pt, outer sep=0pt, right] at (0.08,-0.10) {$2$};
          }%
        }%
      }%
    }%
  }%
}

\newcommand{\tcoo}{%
  \mathchoice
    {\tcooaux{1.0}{true}}
    {\tcooaux{1.0}{true}}
    {\tcooaux{0.7}{false}}
    {\tcooaux{0.5}{false}}
}

\newcommand{\tcooaux}[2]{%
  \mathord{%
    \ifthenelse{\equal{#2}{true}}{\vcenter}{ }{%
      \hbox{%
        \scalebox{#1}{%
          \tikz[baseline=(X.base)]{
            \node (X) at (0,0) {};
            \draw[fill=black] (0,0.10) circle[radius=0.04] -- (0,-0.10) circle[radius=0.04];
            \node[font=\tiny, inner sep=0pt, outer sep=0pt, right] at (0.08,0.10) {$ $};
            \node[font=\tiny, inner sep=0pt, outer sep=0pt, right] at (0.08,-0.10) {$ $};
          }%
        }%
      }%
    }%
  }%
}

\newcommand{\tczero}{%
  \mathchoice
    {\tczeroaux{1.0}{true}}
    {\tczeroaux{1.0}{true}}
    {\tczeroaux{0.7}{false}}
    {\tczeroaux{0.5}{false}}
}

\newcommand{\tczeroaux}[2]{%
  \mathord{%
    \ifthenelse{\equal{#2}{true}}{\vcenter}{ }{%
      \hbox{%
        \scalebox{#1}{%
          \tikz[baseline=(X.base)]{
            \node (X) at (0,0) {};
            \draw[fill=black] (0,0.10) circle[radius=0.04] -- (0,-0.10) circle[radius=0.04];
            \node[font=\tiny, inner sep=0pt, outer sep=0pt, right] at (0.08,0.10) {$\alpha_0$};
            \node[font=\tiny, inner sep=0pt, outer sep=0pt, right] at (0.08,-0.10) {$\beta_0$};
          }%
        }%
      }%
    }%
  }%
}

\newcommand{\tvazero}{%
  \mathchoice
    {\tvazeroaux{1.0}{true}}
    {\tvazeroaux{1.0}{true}}
    {\tvazeroaux{0.7}{false}}
    {\tvazeroaux{0.5}{false}}
}

\newcommand{\tvazeroaux}[2]{%
  \mathord{%
    \ifthenelse{\equal{#2}{true}}{\vcenter}{ }{%
      \hbox{%
        \scalebox{#1}{%
          \tikz[baseline=(X.base)]{
            \node (X) at (0,0) {};
            \draw[fill=black] (0,0.10) circle[radius=0.04] -- (0,-0.10) circle[radius=0.04];
            \node[font=\tiny, inner sep=0pt, outer sep=0pt, right] at (0.08,0.10) {$\alpha_0$};
            \node[font=\tiny, inner sep=0pt, outer sep=0pt, right] at (0.08,-0.10) {$\alpha_0$};
          }%
        }%
      }%
    }%
  }%
}

\theoremstyle{plain}
\declaretheorem[name=Proposition, numberwithin=section]{proposition}
\declaretheorem[name=Theorem, sibling=proposition]{theorem}
\declaretheorem[name=Lemma, sibling=proposition]{lemma}

\theoremstyle{definition}
\declaretheorem[name=Remark, sibling=proposition]{remark}
\declaretheorem[name=Definition, sibling=proposition]{definition}

\declaretheorem[name=Example, sibling=proposition]{example}

\numberwithin{equation}{section}

\renewcommand{\d}[1]{\, \mathrm{d} #1}

\begin{document}

\title{Rough differential equations on manifolds via natural bundles}

\author{Ivan B\v{e}lohl\'{a}vek}
\address{
	Charles University, 
	Faculty of Mathematics and Physics,
	Sokolovsk\'{a} 49/83,
	186 75, Prague 8, Czech Republic
}
\email{ivan.belohlavek@pm.me}

\author{Petr \v{C}oupek\orcidlink{0000-0002-5360-6095}}
\address{
	Charles University, 
	Faculty of Mathematics and Physics,
	Sokolovsk\'{a} 49/83,
	186 75, Prague 8, Czech Republic
}
\email{coupek@karlin.mff.cuni.cz}

\thanks{P.\ \v{C}oupek was supported by the Czech Science Foundation, project no.\ 26-21423S}

\begin{abstract}
In the article, a novel framework for rough differential equations on finite-dimensio-nal smooth manifolds driven by branched rough paths is developed utilizing the theory of natural bundles. The role of vector fields is played by sections of certain associated fiber bundles. The solutions are defined in a generalized Davie sense via a local approximation in such a way that they are invariant under changes of coordinates. Existence and uniqueness of the solutions is proved and a necessary and sufficient condition for the invariance of a submanifold for the solution is given. The approach allows the treatment of rough differential equations driven by fully branched rough paths on manifolds without directly relying on a shuffle product formula, bracket extension, or the Connes--Kreimer Hopf algebra and without imposing additional structure on the manifold.
\end{abstract}

\keywords{Branched rough path, Rough differential equation, Manifold, Invariant manifold}

\subjclass[2020]{34F05, 60H10, 60L20}


\maketitle
\tableofcontents

\section{Introduction}
In the article, we develop a novel framework for the treatment of rough differential equations (RDEs) on smooth, finite-dimensional manifolds that are driven by $\Rsf^n$-valued level-2 branched rough paths via the theory of natural bundles. We prove existence and uniqueness of solutions and find necessary and sufficient conditions for the invariance of submanifolds.

\subsection{State of the art}
\subsubsection{Controlled differential equations}

Ordinary differential equations (ODEs) of the form 
	\begin{equation}
	\label{eq:ODE-intro}
		\d{y}^i = \sum_{\alpha=1}^n f_\alpha^i(y) \d{x}^\alpha, \quad i\in\{1,2,\ldots, m\},
	\end{equation}
where $f_\alpha^i :\Rsf^m\to \Rsf$ (for $i\in\{1,2,\ldots, m\}$ and $\alpha\in \{1,2,\ldots, n\}$) are smooth functions and $x^\alpha:[0,T]\to \Rsf$ (for $\alpha\in \{1,2,\ldots, n\}$) are continuous but not differentiable functions are ill-posed. 

The need for a rigorous mathematical treatment of such equations, besides general theoretical interest, arises from stochastic modeling---in this field, differential equations of the form~\eqref{eq:ODE-intro} describe the behavior of a dynamical system under uncertainty or its response to random forces. If, for example, we wish to model random non-systematic error, a natural choice for $x$ is a path of a \emph{Wiener process} \cite{KarShr98}. If, on the other hand, we wish to model errors with memory effects (such as the mean-reverting microstructure of financial markets in which overshoots tend to be quickly corrected or network traffic in which busy periods bring about more busy periods), a natural choice for $x$ could be a path of an \emph{$H$-fractional Brownian motion} (with $H<1/2$ in the former case and $H>1/2$ in the latter) \cite{BiaHuOksZha08} or a path of another non-Gaussian fractional process \cite{BaiTaq14} if we wish to account for asymmetry of the data.

\subsubsection{RDEs in Euclidean space}
One framework that allows for a rigorous treatment of equations of the form~\eqref{eq:ODE-intro} is the \emph{theory of rough paths} initiated in \cite{Lyo98}. The theory is based on the observation that, in the smooth case, the solution has a Taylor approximation
	\begin{equation*}
		y^i(t)-y^i(s) 
			\approx
				\sum_{\alpha=1}^n f^i_\alpha(y(s))\int_{s}^t \d{x}^\alpha(u) 
				+ \sum_{j=1}^m \sum_{\alpha,\beta=1}^n \left(\frac{ \partial f^i_\beta}{\partial y^j}f^j_\alpha\right)(y(s))\int_{s}^t\int_s^{u_1}\d{x}^\alpha(u_2)\d{x}^\beta(u_1) + \cdots
	\end{equation*}
up to any order. This suggests that, in general, the solution depends on the (iterated) integrals
	\[
		\int_{s}^t \d{x}^\alpha(u), 
			\quad 
		\int_{s}^t\int_s^{u_1}\d{x}^\alpha(u_2)\d{x}^\beta(u_1), 
		 	\quad 
		 \cdots
	\]
By seeking a \emph{classical} solution, we implicitly interpret the integrals as Riemann--Stieltjes integrals which are, if the path $x$ is smooth, well-defined and determined by $\int_s^t \d{x}^\alpha$. If, however, the path $x$ is less regular, then the higher-order iterated integrals cannot be understood as Riemann--Stietljes integrals and they must be supplied as additional input, the so-called \emph{rough path}.

Generally, a rough path is a two-variate map that takes values in the character group of a graded Hopf algebra (which encodes shuffle relations between iterated integrals) that satisfies Chen's identity (which encodes time multiplicativity) and analytic regularity conditions (e.g., H\"older \cite{FriHai20}, $p$-variation \cite{FriVic10}, Sobolev \cite{LiuProTei21}, Besov \cite{FriSee22}, or Besov--Orlicz \cite{CouHenSla26}). 

For example, for the so-called \emph{geometric} rough paths, the underlying algebra is the truncated tensor algebra which encodes the shuffle rules that are satisfied by iterated integrals of smooth paths. A prototypical example of a geometric rough path is the Stratonovich lift of a Wiener path; see \cite{FriVic10}. Another example are the \emph{branched} rough paths whose underlying algebra is the Connes--Kraimer Hopf algebra of rooted trees which arises when the shuffle relations are dropped. An example of a branched rough path is the It\^o lift of a Wiener path; \cite{Gub04}.

There are several notions of RDE solutions; see, e.g., \cite{Bai15, Gub04, Lyo98}. A particularly transparent solution concept is that of Davie \cite{Dav08} (see also \cite{FriVic10}): For a given lift of a path $x$ to a geometric rough path $(X,\Xbb)$ of finite $p$-variation with $p\in [2,3)$, a solution to equation~\eqref{eq:ODE-intro} is defined as the unique path satisfying the local Taylor expansion 
	\begin{equation}
	\label{eq:Taylor-intro}
		y^i(t)-y^i(s) 
			\approx 
				\sum_{\alpha=1}^n f_\alpha^i(y(s))X^\alpha(s,t)
				+ \sum_{j=1}^m \sum_{\alpha,\beta=1}^n \left(\frac{ \partial f_\beta^i}{\partial y^j}f^j_\alpha\right)(y(s))\Xbb^{\alpha,\beta}(s,t).
	\end{equation}
Such a definition is quite natural as it does not involve any a priori defined form of abstract integration and the solution can be constructed as the limit of an Euler-type discrete approximation scheme. Moreover, Davie solutions can be generalized to RDEs driven by branched rough paths \cite{Bai21, CasWei17, Gub10}.

\subsubsection{RDEs on manifolds}

In the past decade, there has been significant effort to extend the rough path theory to the manifold setting. Such attempts, however, typically only focus on geometric rough paths. We list the main approaches in the geometric setting.

In \cite{CasLitLyo11}, the notion of \emph{Lipschitz-$\gamma$ manifolds} is defined. In there, the transition functions between coordinate patches are Lipschitz-$\gamma$ in the sense of \cite{Ste70} with a global constant. Such setting allows one to define a $p$-rough path on a manifold intrinsically as a linear functional on sufficiently regular one-forms satisfying certain continuity and consistency properties. The RDE is then transferred onto a local coordinate chart, identified with a Euclidean RDE there, solved by classical RDE arguments, and the solution is then transferred back onto the manifold. The $\gamma$-Lipschitzianity of the transition maps allows one to obtain uniform estimates on the norms of the transformed vector fields across different charts so that the Picard iteration estimates are stable under changes of coordinates. This ensures that local solutions can be glued into a global solution. As such, the Lipschitz-$\gamma$ property of the manifold is essential to the approach but presents an additional structure that is intimately linked to the regularity of the driving path ($\gamma>p$). It is not clear which Lipschitz-$\gamma$ atlas should be chosen given that a smooth manifold is compatible with many distinct ones and neither it is clear how this choice effects the resulting objects. The work is extended in \cite{BouLyo22}.

A different, extrinsic, view is taken in \cite{CasDriLit15}. In there, the authors define a class of the so-called \emph{constrained rough paths} that consists of rough paths in the ambient Euclidean space whose first level lies on an embedded smooth submanifold and higher levels are constrained so that their tangential components lie in the tensor algebra generated by the tangent spaces along the path with any normal components being determined by the curvature of the submanifold. Solutions to RDEs are then intrinsically defined via a Davie-type approximation adapted to constrained rough paths and their existence and uniqueness is proved by extending the vector fields to the ambient Euclidean space, solving the thus obtained Euclidean RDE by classical arguments, and showing that the solution is a constrained rough path in the embedded submanifold. Integration of $1$-forms is developed to describe the solutions intrinsically using such integrals.

Yet another approach is taken in \cite{DriSem17}. In that article, two additional structures on the manifold---the so-called \emph{logarithm} and \emph{parallelism}---are introduced to give meaning to the difference of two points. This allows to define the notions of a controlled rough path, controlled rough $1$-form along such paths, and that of a rough integral of a (controlled rough) $1$-form and, subsequently, consider RDEs on manifolds. The solutions to such RDEs are again defined in a Davie sense (but shown to be equivalent to other definitions) and its existence and uniqueness are again proved by expressing the equations in local coordinates and solving thus obtained RDEs in the Euclidean space by classical arguments.

There are two works that develop a theory of RDEs on manifolds driven by branched rough paths of which we are aware: \cite{ArmBriCasFer22} (and its extension in \cite{Fer25}) and \cite{CurEbrManMun20}. 

In \cite{ArmBriCasFer22}, a branched rough path is represented locally in charts and by choosing a connection on the manifold, the authors are able to define a rough integral of controlled paths in a coordinate-invariant manner. Solutions to RDEs are defined in such a way that their coordinate expressions satisfy associated RDEs in the Euclidean space that explicitly depend on the correction terms coming from the non-geometric nature of the driving rough path and the chosen connection. In \cite{Fer25}, it is further clarified that the theory essentially relies on the explicit choice of the \emph{bracket extension} of a branched rough path \cite{Kel12} which specifies the form of its change-of-variable formula. Because of the complexity of this notion, only RDEs driven by \emph{quasi-geometric rough paths} \cite{Bel23} are considered.

In \cite{CurEbrManMun20}, the so-called \emph{planarly branched rough paths} are considered. Planarly branched rough paths are branched rough paths indexed only by planar rooted trees, i.e.\ by rooted trees with a specified ordering of the children of each vertex. This is realized by replacing the Connes--Kraimer Hopf algebra used for a general branched rough path by the non-commutative Munthe--Kaas--Wright Hopf algebra \cite{MunWri08}. This extra structure allows to consider and solve RDEs in \emph{homogeneous} spaces, i.e.\ in manifolds equipped with a transitive action of a finite-dimensional Lie group, because the planar structure encodes the order in which infinitesimal group actions are composed in the Davie-type expansion defining the solution.

\subsubsection{Invariance of submanifolds for RDEs}

An important problem is the characterization of invariant submanifolds for the solutions to RDEs. Let $f_\alpha$ (for $\alpha\in \{1,2,\ldots,n\}$) be smooth, compactly supported vector fields on a finite-dimensional smooth manifold $M$ and let $S$ be a properly embedded submanifold of manifold $M$. It is well-known that if at all points $p\in S$, the vector fields $f_\alpha$ are tangent to $S$, then for all smooth drivers $x^\alpha: [0,T]\to\Rsf$ (for $\alpha\in\{1,2,\ldots, n\})$, the solution $y$ to the ODE
	\[ 
		\d{y} = \sum_{\alpha=1}^n f_\alpha(y)\d{x}^\alpha
	\]
started at a point $p\in S$, will stay in $S$; see, e.g., \cite{Lee12}. It is an important question whether, and in which form, this result transfers to RDEs. But while there are many results concerning invariance of sets for classical ODEs and stochastic differential equations (SDEs), there are only a few works dealing with some form of invariance of sets for RDEs.

In \cite{CouMar17}, the viability conditions for a compact or convex set in the spirit of \cite{Nag42} (for ODEs) and \cite{AubDaP90} (for SDEs) for an RDE in $\Rsf^m$ driven by $\Rsf^n$-valued geometric rough path of arbitrary regularity are given.

In \cite{NeaKue21} and \cite{VarRie25}, the long-time behavior of solutions to RDEs in $\Rsf^m$ driven by $\Rsf^n$-valued rough path from the perspective of dynamical systems is studied and sufficient conditions for the existence of a local stable, unstable, and center manifolds for the induced flows and given. In \cite{NeaKue21}, the rough path is not necessarily geometric but it is of level-$2$ while in \cite{VarRie25} only geometric rough paths are considered but it is of level-$3$.

The work most closely related to our approach is the article \cite{Tap24}. In there, the author characterizes the invariance of finite-dimensional $C^3$-submanifolds for an RDE in a Banach space in the sense above. In his setting, the RDE is driven by a level-$2$ rough path $X$ in a Banach space $V$ and its bracket is assumed to be of the form $[X]_t=xt$ for a fixed symmetric element $x\in V\otimes V$.

\subsection{Current approach and description of results}
In the present work, we develop an alternative approach to RDEs on a smooth $m$-dimensional manifold $M$ driven by $\Rsf^n$-valued level-2 branched rough paths that is based on generalized Davie-type local expansions of the solutions. Subsequently, we use the obtained results to study invariance of submanifolds for such RDEs.

Within our framework, $\Rsf^n$-valued level-2 branched rough paths are simply tuples $(X^\tau)$ of two-variate functions indexed as in \cite{Fer25} by forests $\tau$ of the form $\ta$, $\tab$, or $\tc$ that satisfy the usual algebraic and analytic conditions; see \autoref{def:rough_path}. In order to describe RDEs, we define the notion of \emph{rough velocity fields} on $M$. Rough velocity fields on $M$ are sections of a certain associated fiber bundle; see \autoref{def:rvf}. They are constructed in such a way that they encode the response of the modeled system to the infinitesimal change in each of the components of the driving rough path and their local coordinate representations---given, in a chart $(U,x)$ on $M$, by a collection $(f_\tau)$ of smooth functions $f_\tau: U\to \Rsf^m$---transform naturally under changes of coordinates. As suggested by expansion~\eqref{eq:Taylor-intro}, for a rough velocity field to describe an RDE, it has to satisfy a compatibility condition, which we call \emph{admissibility}, between the first- and second-order data. Precisely, we require that
	\[
		\sum_{j=1}^m \frac{\partial f_{\tb}}{\partial y^j} f^j_{\ta} = f_{\tab} + f_{\tc}
	\]
holds in every chart $(U,x)$ on $M$, see \autoref{def:admissible-rvf}. Intuitively, this means that the first-order response $f_{\ta}$ determines the overall second-order response 
	$
		\sum_{j=1}^m \frac{\partial f_{\tb}}{\partial y^j} f^j_{\ta}
	$
which is distributed among the second-order components $f_{\tab}$ and $f_{\tc}$. Finally, given an $\Rsf^n$-valued level-$2$ branched rough path $X=(X^\tau)$ and an admissible rough velocity field $f$ on $M$, we define a solution to the RDE 
	\begin{equation}
	\label{eq:RDE-intro}
		\d{Y} = f(Y)\d{X}
	\end{equation}
as an $M$-valued function $Y$ that can be locally approximated in any chart $(U,x)$ as 
	\[ 
		Y^i(t)-Y^i(s) \approx \sum_{\alpha=1}^n f^i_{\ta}(Y(s))X^{\ta}(s,t) + \frac{1}{2} \sum_{\alpha,\beta=1}^n f^i_{\tab}(Y(s)) X^{\tab}(s,t) + \sum_{\alpha,\beta=1}^n f^i_{\tc}(Y(s)) X^{\tc}(s,t).
	\]
The construction of admissible rough velocity fields ensures that these local expansions are compatible under changes of coordinates and define a unique curve that we call the \emph{integral curve of $f$ driven by $X$}; see \autoref{def:integral-curve}. Initially, we assume that $M=\Rsf^m$ and that the rough velocity field $f$ is compactly supported. We argue why the existence of a global solution cannot be proved by Picard's iterations and use Euler's approximation as in \cite{Dav08} instead; see \autoref{thm:R-existence}. Uniqueness is also proved; see \autoref{thm:R-uniqueness}. On a general manifold $M$, we start by a uniqueness result; see \autoref{thm:M-uniqueness}, and then prove local existence of the solution by transferring the global existence result from $\Rsf^m$ to $M$; see \autoref{thm:M-local-existence}. If $f$ is compactly supported, we obtain global existence of the solution by gluing solutions constructed in overlapping charts; see \autoref{thm:M-global-existence}. 

Subsequently, we apply this framework to study the invariance of an embedded manifold for the RDE~\eqref{eq:RDE-intro}. We obtain necessary and sufficient conditions on the rough velocity field under which the solution started on a submanifold does not leave this submanifold; see \autoref{thm:invariance}.

Our approach allows for the treatment of RDEs on finite-dimensional smooth manifolds driven by, possibly fully branched, level-2 rough paths without relying on a shuffle product formula, bracket extension, or the Connes--Kraimer Hopf algebra; and without imposing additional structure on the manifold. The necessary structure is encoded in the rough velocity field instead. The obtained characterization of the invariance of submanifolds for RDEs describes the invariance in full generality and it is not limited to the special situation of \cite{Tap24} in which the RDE is Euclidean and the driving rough path has a specifically fixed bracket.

\subsection{Organization of the article} The article is organized as follows. In \autoref{sec:brp-davie}, we define level-2 branched rough paths and study transformation laws of Davie-type expansions. In \autoref{sec:rvf}, we recall basic results on jets and define rough velocities, the composition of jets and rough velocities, and construct the rough velocity bundle. Finally, we define rough velocity fields as smooth sections of the rough velocity bundle. \autoref{sec:RDEs} is devoted to RDEs on manifolds. In \autoref{sec:integral-curves}, we define integral curves and study their properties. In \autoref{sec:admissible-rvf}, we argue that not all rough velocity fields can be expected to have integral curves for all drives and arrive at the notion of admissible rough velocity fields. In \autoref{sec:R-existence} and \autoref{sec:R-uniqueness}, we treat the existence and uniqueness of integral curves in $\Rsf^m$, respectively, while in \autoref{sec:M-existence-uniqueness}, existence an uniqueness of integral curves on manifolds are treated. Finally, \autoref{sec:invariance} is devoted to the invariance of submanifolds for RDEs. 

\subsection{Notation and conventions} The following notation and conventions are used throughout the article. We use the superscript to indicate the coordinate; e.g.\ $x^i$ is the $i$\textsuperscript{th} coordinate of $x$. We also use the summation convention that two consecutive indexes in an expression are summed over; e.g., the expression $a_jx^j$ stands for $\sum_j a_jx^j$. On $\Rsf^m$, we use the norm $|x|:=\max_{i\in\{1,2,\ldots, m\}} |x^i|$ for simplicity. Whenever we talk about smooth objects, we always mean $C^\infty$. (It is obvious that this assumption can be weakened in most cases.) For integers $i\leq j$, we will use $\llbracket i,j \rrbracket$ to denote $\{i,i+1,\ldots, j-1,j\}$. We will also denote $\Delta I:= \{(s,t)\in I^2: s\leq t\}$ for a set $I\subseteq\Rsf$. Finally, we use the symbol $A\lesssim B$ to indicate that there exists a finite positive constant $C$ such that $A\leq CB$.

\section{Branched rough paths and Davie solutions}
\label{sec:brp-davie}

In this section, we define the notion of a branched rough path and recall the notion of Davie solutions to RDEs. We show that Davie solutions are invariant under change of coordinates only if the driving rough path is geometric. Then we show how Davie solutions should be extended so that the new solution concept is invariant under change of coordinates even for branched rough paths. Fix an interval $[S,T]\subseteq\Rsf$ throughout this section.

\subsection{Branched rough paths} 
Recall that a function $\omega: \Delta[S,T]\to [0,\infty)$ is called a \emph{control function on the interval $[S,T]$} if it is continuous, zero on the diagonal (i.e.\ $\omega(s,s) = 0$ holds for all $S\leq s\leq T$), and superadditive (i.e.\ $\omega(s,t) \geq \omega(s,u) + \omega(u,t)$ holds for all $S\leq s\leq u\leq t\leq T$). We will use the following observation: If $(s,t)\in\Delta[S,T]$ satisfies $\omega(s,t)\leq 1$, then
	\begin{equation}
	\label{eq:control}
			\omega(x,y)^{\gamma_1} \leq \omega(s,t)^{\gamma_2}, \quad (x,y)\in\Delta[s,t], \quad \gamma_1\geq \gamma_2.
	\end{equation}

For $n\in\Nsf$, denote by $\Fcal_n^2$ the set of forests of rooted trees labeled by $\llbracket 1,n \rrbracket $ with at most two vertices but excluding the empty forest. That is, $\Fcal_n^2$ consists of $n$ one-vertex trees of the form $\ta$, $n(n+1)/2$ two-vertex forests of the form $\tab$, and $n^2$ two-vertex trees of the form $\tc$, where $\alpha$ and $\beta$ are labels from the set $\llbracket 1,n \rrbracket$. A rough path is defined now.

\begin{definition}
\label{def:rough_path}
Let $n\in\Nsf$, $p\in [1,3)$, and let $\omega$ be a control function on $[S,T]$. An \emph{$n$-dimensional branched rough path on $[S,T]$ with regularity $p$ and control $\omega$} is a tuple $X=(X^\tau)$ consisting of functions $X^\tau: \Delta[S,T]\to\Rsf$ (for $\tau\in\Fcal_n^2$) such that the equalities
	\begin{align}
		X^{\ta}(s,t) &= X^{\ta}(s,u) + X^{\ta}(u,t),
		\label{eq:rp-additive}
		\\
		X^{\tab}(s,t) & = X^{\ta}(s,t)X^{\tb}(s,t)
		\label{eq:rp-multiplicative}
		\\
		X^{\tc}(s,t) & = X^{\tc}(s,u) + X^{\tc}(u,t) + X^{\ta}(s,u)X^{\tb}(u,t),
		\label{eq:rp-chen}
	\end{align}
hold for for every $\alpha,\beta\in\llbracket 1,n \rrbracket$ and every $(s,u,t)\in [S,T]^3$ satisfying $s\leq u\leq t$ and such that there exists a constant $C\in (0,\infty)$ such that the inequalities
	\begin{equation}
	\label{eq:rp_regularity}
		\left|X^{\ta}(s,t)\right| \leq C \omega(s,t)^\frac{1}{p},
		\quad 
		\left|X^{\tab}(s,t)\right| \leq C \omega(s,t)^\frac{2}{p}, 
		\quad\mbox{and}\quad 
		\left|X^{\tc}(s,t)\right| \leq C \omega(s,t)^\frac{2}{p}
	\end{equation}
hold for for all $\alpha,\beta\in\llbracket 1,n \rrbracket$ and $(s,t)\in\Delta[S,T]$. The least such $C$ is denoted by the symbol $\|X\|$.
\end{definition}

\begin{example}
\label{ex:natural-lift}
Let $n\in\Nsf$ and $p\in [1,3)$. Denote the space of continuous functions $x:[S,T]\to\Rsf^n$ of finite $p$-variation by $C^{p-\mathrm{var}}([S,T], \Rsf^n)$, see \cite[Definition 5.1]{FriVic10}, and let $x\in C^{p-\mathrm{var}}([S,T], \Rsf^n)$. If $p\in [1,2)$, then $x$ can be lifted to an $n$-dimensional branched rough path $X=(X^\tau)$ of regularity $p$ and control $\omega$ defined by $\omega(s,t):= \|x\|_{p-\mathrm{var};[s,t]}^p$ (for $(s,t)\in\Delta[S,T]$) by setting 
	\begin{align*}
		X^{\ta}(s,t)  := x^{\alpha}_t-x^{\alpha}_s,
			\quad 
		X^{\tab}(s,t) := (x^{\alpha}_t-x^{\alpha}_s)(x^{\beta}_t-x^{\beta}_s), 
			\quad\mbox{and}\quad
		X^{\tc}(s,t)  := \int_s^t (x^\alpha_r-x^\alpha_s) \d{x}_r^\beta
	\end{align*}
for $\alpha,\beta\in \llbracket 1,n \rrbracket $ and $(s,t)\in \Delta[S,T]$. The integral in the last expression is the classical Young integral, see, e.g., \cite[Chapter 6]{FriVic10}. If $p\in [2,3)$, then one cannot use the Young integral and the last object $X^{\tc}$ that satisfies Chen's relation~\eqref{eq:rp-chen} and the third regularity condition in~\eqref{eq:rp_regularity} has to be supplied as additional input. 
\end{example}

\begin{remark}
In most definitions of branched rough paths found in the literature, some variant of the Connes--Kreimer Hopf algebra $\mathcal{H}_{CK}$ is used. The underlying vector space of this algebra is the free ($\Rsf$-)vector space generated by $\Fcal_{n}$, the set of forests labeled by $\llbracket 1,n \rrbracket$. For example, in the article~\cite{Gub10}, which introduced branched rough paths, they are defined as algebra morphisms from $\Hcal_{CK}$ to an algebra of two-parameter paths. In~\cite{TapZam20}, they are defined as functions from $[0, 1]$ to the character group of $\Hcal_{CK}$---the group of multiplicative linear functionals on $\Hcal_{CK}$. In~\cite{Fer25}, they are defined as functions from an interval $[S,T]$ to the truncated Grossman--Larson Hopf algebra. This is again an algebra whose underlying vector space is generated by a set of labeled forests. 
In any case, a branched rough path may be seen as having components indexed by labeled forests which satisfy~\eqref{eq:rp-additive}--\eqref{eq:rp-chen}. We do not require the algebraic framework
but our \autoref{def:rough_path} encompasses the same idea.
\end{remark}

\subsection{Davie solutions and change of coordinates}
We begin by recalling a solution concept to RDEs in $\Rsf^m$ which captures the core of Davie's original definition; cf.\ \cite[Definition 3.1]{Dav08}.

\begin{definition}
\label{def:davie-solution}
Let $m,n\in\Nsf$, $p\in [1,3)$, $x\in C^{p-\mathrm{var}}([S,T],\Rsf^n)$, and let $f_\alpha^i:\Rsf^m\to\Rsf$ (for $i\in\llbracket 1, m \rrbracket$ and $\alpha\in\llbracket 1,n \rrbracket$) be smooth functions. Let also $Y_S\in\Rsf^m$. Consider the system of controlled differential equations
	\begin{equation}
	\label{eq:ode}
		\d{Y}^i  = f_{\alpha}^i(Y)\d{x}^\alpha, 
			\quad 
		Y^i(S) = Y_S^i,
	\end{equation}
for $i\in \llbracket 1, m \rrbracket$. Given a rough path lift $(X^\tau)$ of function $x$ (see \autoref{ex:natural-lift}), we say that a function $Y: [S,T]\to\Rsf^m$ is a \emph{Davie solution} to problem~\eqref{eq:ode} if $Y(S) = Y_S$ and if there exists a constant $C\in (0,\infty)$ such that the inequality
	\begin{equation}
	\label{eq:davie}
		\left|
			Y^i(t) - Y^i(s) 
			- f_\alpha^i(Y(s)) X^{\ta}(s,t) 
			- \left(
				\frac{\partial f_\beta^i}{\partial y^j}f_\alpha^j
			  \right)
			(Y(s)) X^{\tc} (s,t)
		\right| 
		\leq C\omega(s,t)^\frac{3}{p}
	\end{equation}
holds for every $i\in \llbracket 1,m \rrbracket$ and $(s,t)\in \Delta [S,T]$.
\end{definition}

Let us now investigate how expansions of the form~\eqref{eq:davie} behave under compositions with smooth functions. To this end, we have the following

\begin{lemma}
\label{thm:ch-var}
Let $m,n,\ell\in\Nsf$, let $U\subseteq\Rsf^m$ be an open convex set, and let $\phi:U\to\Rsf^\ell$ be a smooth function. Let $p\in [1,3)$, let $\omega$ be a control function on interval $[S,T]$, and let $X=(X^\tau)$ be an $n$-dimensional branched rough path on interval $[S,T]$ with regularity $p$ and control $\omega$. Let $Y: [S,T]\to U$ be a continuous curve in $U$ and let $f_\tau^i:U\to\Rsf$ (for $\tau\in\Fcal_n^2$ and $i\in\llbracket 1,m \rrbracket$) be smooth functions. Assume that there exists $C\in (0,\infty)$ such that the inequality 
	\begin{equation}
	\label{eq:full-davie}
		\bigg| 
			Y^i(t) - Y^i(s) 
			- f_{\ta}^i(Y(s)) X^{\ta}(s,t) 
			- \frac{1}{2} f_{\tab}^i(Y(s)) X^{\tab}(s,t) 
			- f_{\tc}^i(Y(s)) X^{\tc}(s,t) 
		\bigg| 
		\leq C \omega(s,t)^\frac{3}{p}
	\end{equation}
holds for every $i\in\llbracket 1,m \rrbracket$ and $(s,t)\in\Delta[S,T]$. Then there is a constant $\widetilde{C}\in (0,\infty)$ such that 
	\[
		\bigg|
			\phi^i(Y(t)) - \phi^i(Y(s)) 
			- g^i_{\ta}(Y(s))X^{\ta}(s,t) 
			- \frac{1}{2} g^i_{\tab}(Y(s)) X^{\tab}(s,t) 
			- g^i_{\tc}(Y(s))X^{\tc}(s,t) 
		\bigg| 
		\leq \widetilde{C}\omega(s,t)^\frac{3}{p},
	\]
where 
	\[
		g^i_{\ta} := \frac{\partial\phi^i}{\partial y^j}f_{\ta}^j, 
			\quad 
		g_{\tab}^i := \frac{\partial\phi^i}{\partial y^j}f_{\tab}^j + \frac{\partial^2 \phi^i}{\partial y^j\partial y^k} f_{\ta}^jf_{\tb}^k, 
			\quad\mbox{and}\quad  
		g_{\tc}^i := \frac{\partial\phi^i}{\partial y^j}f_{\tc}^j,
	\]
holds for every $i\in \llbracket 1,\ell \rrbracket$ and $(s,t)\in\Delta[S,T]$.
\end{lemma}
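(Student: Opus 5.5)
The plan is a second-order Taylor expansion of $\phi$ along $Y$, substituting the assumed expansion~\eqref{eq:full-davie} for the increments. Write $\delta Y^j := Y^j(t)-Y^j(s)$ and $R^j(s,t)$ for the remainder inside the absolute value in~\eqref{eq:full-davie}, so $|R^j(s,t)|\le C\omega(s,t)^{3/p}$. Since $U$ is convex, the segment from $Y(s)$ to $Y(t)$ lies in $U$, and Taylor's formula with integral remainder gives
\begin{equation*}
\phi^i(Y(t))-\phi^i(Y(s)) = \frac{\partial\phi^i}{\partial y^j}(Y(s))\,\delta Y^j + \frac12\frac{\partial^2\phi^i}{\partial y^j\partial y^k}(Y(s))\,\delta Y^j\delta Y^k + \rho^i(s,t),
\end{equation*}
where $|\rho^i(s,t)|\lesssim |\delta Y|^3$. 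The constant here is a bound on the third derivatives of $\phi$ on the convex hull of $Y([S,T])$; this hull is a compact subset of $U$ because $Y$ is continuous on a compact interval.

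The first step is an a priori bound $|\delta Y|\lesssim \omega(s,t)^{1/p}$. By~\eqref{eq:full-davie} and~\eqref{eq:rp_regularity}, $|\delta Y|\lesssim \omega^{1/p}+\omega^{2/p}+\omega^{3/p}$, where the functions $f_\tau$ are bounded on the compact set $Y([S,T])$. When $\omega(s,t)\le 1$ this is $\lesssim\omega^{1/p}$. When $\omega(s,t)>1$, I use instead that $Y$ is bounded, so $|\delta Y|\le \operatorname{diam} Y([S,T])\le \operatorname{diam}Y([S,T])\,\omega^{1/p}$. In every case $|\rho^i|\lesssim\omega^{3/p}$.

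Next I substitute $\delta Y^j = f^j_{\ta}X^{\ta}+\tfrac12 f^j_{\tab}X^{\tab}+f^j_{\tc}X^{\tc}+R^j$, with summation over labels and the $f$'s evaluated at $Y(s)$, into both terms of the expansion.
\begin{itemize}
\item The linear term gives $g_{\ta}X^{\ta}+\tfrac12\frac{\partial\phi^i}{\partial y^j}f^j_{\tab}X^{\tab}+g_{\tc}X^{\tc}$, plus $\frac{\partial\phi^i}{\partial y^j}R^j=O(\omega^{3/p})$.
\item In the quadratic term I keep only the product of the first-order parts, $\tfrac12\frac{\partial^2\phi^i}{\partial y^j\partial y^k}f^j_{\ta}f^k_{\tb}X^{\ta}X^{\tb}$. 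Every other cross product is $O(\omega^{3/p})$ when $\omega\le1$, since it has degree at least $1/p+2/p$. When $\omega>1$ it is handled as in the next step.
\end{itemize}
By~\eqref{eq:rp-multiplicative}, $X^{\ta}X^{\tb}=X^{\tab}$, so the kept term equals $\tfrac12\frac{\partial^2\phi^i}{\partial y^j\partial y^k}f^j_{\ta}f^k_{\tb}X^{\tab}$. Together with the $f_{\tab}$ contribution from the linear term, this is exactly $\tfrac12 g^i_{\tab}X^{\tab}$.

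The main obstacle is the bookkeeping of the index convention rather than the analysis. The statement sums over all ordered pairs $(\alpha,\beta)$ with $X^{\tab}$ symmetric, which matches the double sum coming from the quadratic form; this has to be checked against how~\eqref{eq:full-davie} counts $\tab$, and I would adopt the ordered-pair reading throughout. The analytic issue is the regime $\omega(s,t)>1$, where higher powers of $\omega$ dominate $\omega^{3/p}$. I would avoid it globally: the left-hand side of the desired inequality is bounded by $\sup|\phi\circ Y|$ plus the $g$-terms. Those are polynomial in $\omega$ and bounded on $\Delta[S,T]$, because $\omega$ is continuous on a compact set and hence bounded by $\omega(S,T)$. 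On $\{\omega>1\}$ this bounded quantity is $\le \text{const}\cdot\omega^{3/p}$. Taking $\widetilde C$ as the maximum of the constants from the two regimes finishes the proof.
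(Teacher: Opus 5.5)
Your proposal is correct and follows the paper's proof. Both arguments use a second-order Taylor expansion of $\phi$ at $Y(s)$, with the cubic remainder controlled on the compact convex hull of $Y([S,T])$ inside $U$; the a priori bound $|Y(t)-Y(s)|\lesssim\omega(s,t)^{1/p}$; and the identity $X^{\ta}X^{\tb}=X^{\tab}$ to assemble $\frac12 g_{\tab}X^{\tab}$. The only difference is in the regime $\omega(s,t)\geq 1$: the paper takes the maximum of the normalized remainder over the compact set $\{\omega\ge 1\}$, while you bound every term uniformly, which works equally well and does not need continuity of the $X^\tau$.
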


\begin{proof}
Let $i\in \llbracket 1,\ell \rrbracket$ and let $(s,t)\in\Delta[S,T]$ satisfy $\omega(s,t)\leq 1$. Taylor's expansion of the function $\phi$ at $Y(s)$ yields
	\begin{multline*}
		\Bigg| 
			\phi^i(Y(s)) - \phi^i(Y(s)) 
			+ \frac{\partial\phi^i}{\partial y^j}(Y(s))(Y^j(t) - Y^j(s)) \\
			+ \frac{1}{2} \frac{\partial^2 \phi^i}{\partial y^j\partial y^k} (Y(s)) (Y^j(t)-Y^j(s))(Y^k(t)-Y^k(s))
		\Bigg| \lesssim |Y(t)-Y(s)|^3,
	\end{multline*}
where the constant can be taken independent of $s$, as it only depends on the derivatives of $\phi$ at a point in the convex hull $K$ of the trajectory of the curve $Y$ that is a compact subset of $U$. Now, by using inequality~\eqref{eq:full-davie}, the regularity estimates~\eqref{eq:rp_regularity}, and the control estimate~\eqref{eq:control}, we obtain
	\[
		|Y(t)-Y(s)| \lesssim \omega(s,t)^\frac{3}{p} + \omega(s,t)^\frac{2}{p} + \omega(s,t)^\frac{1}{p} \lesssim \omega(s,t)^\frac{1}{p}
	\]
where the constant only depends on the values of $f_\tau^i$ in $K$ for $\tau\in\Fcal_n^2$ and $\|X\|$. It follows from these two estimates that there exists a constant $C_0\in (0,\infty)$ such that
	\[
		\bigg|
			\phi^i(Y(t)) - \phi^i(Y(s)) 
			- g^i_{\ta}(Y(s)) X^{\ta}(s,t) 
			- \frac{1}{2} g^i_{\tab}(Y(s)) X^{\tab}(s,t) 
			- g^i_{\tc}(Y(s)) X^{\tc}(s,t) 
		\bigg| 
		\leq C_0\, \omega(s,t)^\frac{3}{p}.
	\]
It remains to realize that if the above inequality holds for $(s,t)\in\Delta[S,T]$ such that $\omega(s,t)\leq 1$, then it holds, possibly with a different constant, for any $(s,t)\in\Delta[S,T]$. To this end, note that the set $\{(s,t)\in\Delta[S,T]: \omega(s,t)\geq 1\}$ is compact so that the continuous function
	\[ 
		(s,t) \mapsto \omega(s,t)^{-\frac{3}{p}} 
						\bigg| 
							\phi^i(Y(t)) - \phi^i(Y(s)) 
							- g^i_{\ta} X^{\ta}(s,t) 
							- \frac{1}{2} g^i_{\tab}(Y(s)) X^{\tab}(s,t) 
							- g^i_{\tc}(Y(s)) X^{\tc}(s,t) 
						\bigg|
	\]
attains its maximum there. Therefore, if we denote the maximum by $C_i$, replacing constant $C_0$ by $\widetilde{C}:= \max\{C_0, C_1, C_2, \ldots, C_\ell\}$ concludes the proof.
\end{proof}

\begin{remark} 
\label{rem:invariance_of_davie_solutions}
In the setting of \autoref{thm:ch-var}, even if we have that 
	\[
		f^i_{\tab}=0
	\] 
holds for every $i\in\llbracket 1,m\rrbracket$ and every $\alpha,\beta\in \llbracket 1,n \rrbracket$, we get  
	\[ 
		g_{\tab}^i = \frac{\partial^2\phi^i}{\partial y^j\partial y^k} f_{\ta}^jf_{\tb}^k
	\]
for every $i\in\llbracket 1,\ell \rrbracket$ and $\alpha,\beta\in \llbracket 1,n \rrbracket$. Generally, this is not zero. If, however, $X$ is \emph{geometric}, i.e.~if 
	\[ 
		X^{\tc}+X^{\td} = X^{\tab}
	\]
holds for every $\alpha,\beta \in \llbracket 1,n \rrbracket$, we could simplify the resulting expression by grouping together coefficients $g^i_{\tab}$ and $g^i_{\tc}$ and obtain that the estimate 
	\[
		\left|
			\phi^i(Y(t)) - \phi^i(Y(s)) 
			- \widetilde{g}^{\,i}_{\ta}(Y(s)) X^{\ta}(s,t) 
			- \widetilde{g}^{\,i}_{\tc}(Y(s)) X^{\tc}(s,t) 
		\right| 
		\leq \widetilde{C}\omega(s,t)^{\frac{3}{p}},
	\]
where 
	\[
		\widetilde{g}^{\,i}_{\ta} := g^i_{\ta}
			\quad\mbox{and}\quad 
		\widetilde{g}^{\,i}_{\tc} := \frac{1}{2} \left(g_{\tab}^i + g_{\tba}^i\right) + g_{\tc}^i,
	\]
holds for every $i\in\llbracket 1,\ell \rrbracket$ and $(s,t)\in\Delta[S,T]$. It follows that, unless the driving rough path is geometric, Davie solutions are not invariant under smooth changes of coordinates. Therefore, if we wish to arrive at a solution concept with this property, we must consider expansions of the form~\eqref{eq:full-davie}. An immediate consequence is that it will not be possible to express such solutions as a rough integral with respect to $X$, because by construction, rough integrals satisfy an expansion of the form~\eqref{eq:full-davie} with $f^i_{\tab}=0$. Therefore, Picard iteration will not be possible within our framework.
\end{remark} 

\section{Rough velocity fields}
\label{sec:rvf}

In this section, we define a fiber bundle (called the rough velocity bundle) whose sections (called rough velocity fields) will correspond to RDEs in \autoref{sec:RDEs}. To this end, we recall some notions related to jets and jet groups first. We refer to \cite{KolMichSlo93} for the details.

Let $M$ and $N$ be two smooth, finite-dimensional manifolds. (Throughout the paper, we will only consider such manifolds and therefore, we shall omit these two epithets.) Recall that two curves $\gamma,\delta: \Rsf \to M$ are said to have the \emph{second-order contact at zero} if, for every smooth function $\varphi$ on $M$, the difference $\varphi\circ\gamma-\varphi\circ\delta$ and its first and second derivatives are zero at $0\in\Rsf$. In this case, we write $\gamma\sim_2\delta$. Two smooth functions $f,g:M\to N$ are said to \emph{determine the same $2$-jet at $x\in M$} if for every curve $\gamma:\Rsf\to M$ with $\gamma(0)=x$, the curves $f\circ\gamma$ and $g\circ\gamma$ have the second-order contact at zero. An equivalence class of this relation is called a \emph{$2$-jet (of smooth functions) from $M$ into $N$}. Given a smooth function $f:M\to N$, its jet is denoted by $j_x^2f$. The point $x$ is called the \emph{source} of $j_x^2f$ and the value $f(x)$ is called its \emph{target}. 

Let $m,n\in\Nsf$ and denote by $L_{mn}^2$ the set of $2$-jets of smooth functions from $\Rsf^n$ to $\Rsf^m$ with source $0\in\Rsf^n$ and target $0\in\Rsf^m$.\footnote{Note that our notation differs from the one in  \cite{KolMichSlo93}. Namely, in \cite{KolMichSlo93}, $L_{mn}^2$ denotes the set of $2$-jets of smooth functions from $\Rsf^m$ to $\Rsf^n$.} Jets in $L^2_{mn}$ can be identified with their \emph{polynomial representative}, an $m$-tuple of second-order polynomials in $n$ variables with no constant term: Given a jet $a\in L_{mn}^2$, its polynomial representative can be written as 
	\[ 
		a_j^ix^j + \frac{1}{2} a_{jk}^ix^jx^k 
			\quad 
		(i\in\llbracket 1,m \rrbracket;\, j,k\in \llbracket 1,n \rrbracket),
	\]
in which the coefficients satisfy $a_{jk}^i=a_{kj}^i$. The functions $a\mapsto a^i_j$ and $a\mapsto a_{jk}^i$ ($j\leq k$) define a global chart on $L^2_{mn}$. We will simply write $(a_j^i, a_{jk}^i)$ to denote the polynomial representative of $a$. 

Jets with matching source and target can be composed by composing their polynomial representatives and discarding terms with degree larger than two: For $\ell, m, n\in\Nsf$ and jets $a\in L_{\ell m}^2$ and $b\in L_{mn}^2$ with polynomial representatives $(a_j^i, a_{jk}^i)$ and $(b_j^i, b_{jk}^i)$, respectively, the \emph{composition} $c=ab\in L_{\ell n}^2$ is the jet whose polynomial representative $(c_j^i, c_{jk}^i)$ is given by
	\[ 
		c_j^i := a_p^ib_j^p 
			\quad \mbox{and} \quad
		c_{jk}^i := a_p^i b_{jk}^p + a_{pq}^ib_j^pb_k^q
			\quad 
		(i\in \llbracket 1,\ell \rrbracket; \, j,k\in \llbracket 1,n \rrbracket).
	\]

The set of all invertible elements of $L_{mm}^2$ with the jet composition defined above forms a Lie group called the \emph{second-order jet group in dimension $m$} which we denote by $G_m^2$. A jet $a\in L^2_{mm}$ represented by $(a_j^i, a_{jk}^i)$ is invertible if and only if the matrix $(a_j^i)$ is invertible. As the set of invertible matrices is open, the global chart on $L^2_{mm}$ is still a global chart on $G_m^2$. The identity jet is represented by $(a_j^i, a_{jk}^i)=(\delta_j^i, 0)$ where $\delta_j^i$ is the Kronecker delta.

\subsection{Rough velocities}
Recall that for $n\in \Nsf$, the symbol $\Fcal_n^2$ denotes the set of forests of rooted trees labeled by $\llbracket 1,n \rrbracket$ with at most two vertices but excluding the empty forest.

\begin{definition}
For $n\in\Nsf$, denote by $F_n^2$ the real vector space with basis $\Fcal_n^2$ and for $m,n\in\Nsf$ denote by $F_{mn}^2$ the $m$-fold Cartesian product of $F_n^2$. The elements of $F_{mn}^2$ are called \emph{rough velocities}.
\end{definition}

Elements of $F_n^2$ may be seen as polynomials in the variables $\ta$, $\tab$, and $\tc$ with no constant term. Therefore, rough velocities, while being tuples $(f_\tau^i) \in F_{mn}^2$ of real numbers, can be interpreted as $m$-tuples of polynomials in $\ta$, $\tab$, and $\tc$. The change-of-variables formula in \autoref{thm:ch-var} can then be interpreted as an action of a jet on a rough velocity. This action is given by composing two tuples of polynomials---the polynomial representative of the jet and the rough velocity interpreted as an $m$-tuple of polynomials in $\ta$, $\tab$, and $\tc$---and discarding terms that would correspond to forests with more than two vertices. 

\begin{definition}
\label{def:jet-velocity_composition}
For $\ell, m, n\in\Nsf$, a jet $a\in L_{\ell m}^2$ represented by $(a_j^i, a_{jk}^i)$, and a rough velocity $f=(f_\tau^i)\in F_{mn}^2$, the \emph{jet-velocity composition} is defined as $af=(g_\tau^i)\in F_{\ell n}^2$ where
	\[
		g^i_{\ta} := a_j^i f^j_{\ta}, 
			\quad 
		g_{\tab}^i:= a_j^if_{\tab}^j + a_{jk}^i f_{\ta}^j f_{\tb}^k, 
			\quad 
		g_{\tc}^i := a_j^i f^j_{\tc} 
			\quad 
		(i\in\llbracket 1,\ell \rrbracket;\, \alpha,\beta\in \llbracket 1,n \rrbracket).
	\] 
\end{definition}

Clearly, if $a\in G_m^2$ is the identity jet, then we have $af=f$ for any $f\in F_{mn}^2$. It is also clear that the jet-rough velocity composition is a smooth mapping $L^2_{\ell m}\times F_{mn}^2 \to F_{\ell n}^2$. We also have

\begin{lemma}
The jet-rough velocity composition is associative. That is, for $k,\ell, m,n\in\Nsf$, jets $a\in L^2_{k\ell}$, $b\in L^2_{\ell m}$, and a rough velocity $f\in F_{mn}^2$, we have the equality $(ab)f = a(bf)$. 
\end{lemma}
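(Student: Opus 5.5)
The plan is a direct comparison of coordinates: compute both sides of $(ab)f=a(bf)$ component by component using the jet composition formula and \autoref{def:jet-velocity_composition}. Write $a=(a^i_j,a^i_{jk})$, $b=(b^p_j,b^p_{jk})$ and $f=(f^q_\tau)$. The components indexed by $\ta$ and $\tc$ are linear in $f$ and only involve first-order coefficients. For them associativity reduces to associativity of matrix multiplication. For instance,
\[
((ab)f)^i_{\ta} = (a^i_p b^p_j) f^j_{\ta} = a^i_p (b^p_j f^j_{\ta}) = (a(bf))^i_{\ta},
\]
and the $\tc$ component is identical with $\ta$ replaced by $\tc$.

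The only real content is the $\tab$ component. On the left side, $ab$ has first-order part $c^i_j=a^i_pb^p_j$ and second-order part $c^i_{jk}=a^i_pb^p_{jk}+a^i_{pq}b^p_jb^q_k$. Hence
\[
((ab)f)^i_{\tab} = a^i_p b^p_j f^j_{\tab} + a^i_p b^p_{jk} f^j_{\ta} f^k_{\tb} + a^i_{pq} b^p_j b^q_k f^j_{\ta} f^k_{\tb}.
\]
On the right side, write $g=bf$, so that $g^p_{\ta}=b^p_jf^j_{\ta}$ and $g^p_{\tab}=b^p_jf^j_{\tab}+b^p_{jk}f^j_{\ta}f^k_{\tb}$. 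Then
\[
(a(bf))^i_{\tab} = a^i_p g^p_{\tab} + a^i_{pq} g^p_{\ta} g^q_{\tb},
\]
and expanding this gives exactly the same three terms, after relabelling summation indices and using distributivity.

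The one point needing care is that the index $\tab$ is an unordered forest, so $f_{\tab}=f_{\tba}$, while the defining formula singles out an order $\alpha,\beta$. I would check that the formula is well-defined, meaning symmetric in $\alpha\leftrightarrow\beta$. This follows from the symmetry $a^i_{jk}=a^i_{kj}$ of the polynomial representative, and the same symmetry holds for the composed jet $ab$, whose second-order coefficients $c^i_{jk}$ are symmetric. Once this is noted, the bookkeeping above is the whole proof, and I expect no genuine obstacle.
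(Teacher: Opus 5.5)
Your proposal is correct and is essentially the paper's proof. The $\ta$ and $\tc$ components reduce to associativity of matrix multiplication, and the $\tab$ component is verified by expanding $a(bf)$ into the same three terms and matching them against the coefficients of $c=ab$. Your extra check that the $\tab$ formula is symmetric under $\alpha\leftrightarrow\beta$, via $a^i_{jk}=a^i_{kj}$ and the symmetry of $c^i_{jk}$, is a correct well-definedness point that the paper leaves implicit.
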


\begin{proof}
	Let $a$ and $b$ be represented by $(a_{r}^i, a_{rs}^i)$ and $(b_{r}^i, b_{rs}^i)$, respectively. Denote by $c:=ab\in L_{km}^2$, $g:=bf\in F_{\ell n}^2$, $h:=ag\in F_{kn}^2$, and $e:=cf\in F_{kn}^2$. Denote also $h=:(h^i_\tau)$ and $e=:(e^i_\tau)$. Let $i\in \llbracket 1,k \rrbracket$ and $\alpha,\beta \in \llbracket 1,n \rrbracket$. Clearly $h^i_{\ta} = e^i_{\ta}$ and $h^i_{\tc} = e^i_{\tc}$. We also have
	\begin{align*}
   		 h^{i}_{\tab}
   		 	& = a^{i}_{r}g^{r}_{\tab} 
   		 		+ a^{i}_{rs}g^{r}_{\ta}g^{s}_{\tb}
  			\\
 			& = a^{i}_{r}\left(b^{r}_{p}f^{p}_{\tab} 
 				+ b^{r}_{pq}f^{p}_{\ta}f^{q}_{\tb}\right) 
 				+ a^{i}_{rs}b^{r}_{p}f^{p}_{\ta}b^{s}_{q}f^{q}_{\tb}
   			\\
   			& = a^{i}_{r}b^{r}_{p}f^{p}_{\tab} 
   				+ \left(a^{i}_{r}b^{r}_{pq} 
   				+ a^{i}_{rs}b^{r}_{p}b^{s}_{q}\right)f^{p}_{\ta}f^{q}_{\tb}
   			\\
   			& = c^{i}_{p}f^{p}_{\tab} 
   				+ c^{i}_{pq}f^{p}_{\ta}f^{q}_{\tb}  
   			\\
			& = g^{i}_{\tab}. 
			\qedhere
  \end{align*}
\end{proof}

\subsection{Rough velocity bundles}
Let $M$ be an $m$-dimensional manifold and let $P^2M$ be the \emph{second-order frame bundle} of $M$, i.e.\ 
the set of $2$-jets of local diffeomorphisms from $\Rsf^m$ to $M$ with source $0\in\Rsf^m$. Recall that $P^2M$ is a \emph{principal fiber bundle} with structure group $G_m^2$: It is a smooth manifold with a smooth projection $P^2M\to M$, $j_0^2\psi \mapsto \psi(0)$, the mapping $P^2M\times G_m^2\to P^2M$, $(j_0^2\varphi, j_0^2\psi)\mapsto j_0^2(\varphi\circ\psi)$, is a smooth right action of the Lie group $G_m^2$ on $P^2M$ that is free and transitive on fibers, and local trivializations exists---each chart $(U,x)$ on $M$ induces a fiber chart $(\widetilde{U},\widetilde{x})$ on $P^2M$ where $\widetilde{U}:=\{j_0^2\psi: \psi(0)\in U\}$ and the chart map $\widetilde{x}: \widetilde{U}\to U\times G_m^2$ is given by $j_0^2\psi\mapsto (\psi(0), j_0^2(x\circ\psi))$. Here, $j_0^2(x\circ\psi)$ is interpreted as an element of $G_m^2$. We refer to \cite[Section 12.12]{KolMichSlo93} for details. Now, as the jet--rough velocity composition is a smooth (left) action of $G_m^2$ on $F_{mn}^2$, one can consider the \emph{associated bundle} to $P^2M$ for this action.

\begin{definition}
For $m,n\in\Nsf$ and an $m$-dimensional manifold $M$, the associated bundle to the second-order frame bundle $P^2M$ for the action of jet-rough velocity composition $G_m\times F_{mn}^2\to F_{mn}^2$ is called the \emph{rough velocity bundle} and denoted by $R^2_nM$.
\end{definition}

Let us recall the main points of the construction. The reader can refer to \cite[Section 10.7]{KolMichSlo93} for details. The underlying set of $R_n^2M$ is the set of orbits of the right action of $G_m^2$ on $P^2M\times F_{mn}^2$ given by $(j_0^2\psi,f)\cdot a= (j_0^2(\psi\circ a), a^{-1}f)$, $(j_0^2\psi,f)\in P^2M\times F_{mn}^2$, $a\in G_m^2$. Its elements are thus equivalence classes of pairs $(j_0^2\psi,f)$ of elements $j_0^2\psi\in P^2M$ and $f\in F_{mn}^2$. The bundle projection $\pi$ maps the class $[(j_0^2\psi,f)]$ to $\psi(0)$. Each chart $(U,x)$ on $M$ induces a fiber chart $(\overline{U},\overline{x})$ on $R^2_nM$ where $\overline{U}:=\{ [(j_0^2\psi,f)]: \psi(0)\in U\}$ and where the chart map $\overline{x}: \overline{U}\to U\times F_{mn}^2$ is given by $[(j_0^2\psi,f)]\mapsto (\psi(0),af)$ with $a= j_0^2(x\circ \psi)$ interpreted as an element of $G_m^2$. By composing this fiber chart with $x$ in the first component, we obtain a chart on $R^2_nM$ with domain $\overline{U}$ and components $(x^i, f_\tau^i)$ for $i\in \llbracket 1,m \rrbracket$ and $\tau\in \Fcal_n^2$. Whenever we perform local calculations on $R_n^2M$, we shall do so using these induced charts.

Given two charts $(U,x)$ and $(V,y)$ in $M$, let $a_{xy}:U\cap V\to G_m^2$ be the map defined by $p\mapsto j_{y(p)}^2 x\circ y^{-1}$, i.e.\ for $p\in U\cap V$, $a_{xy}(p)$ is the $2$-jet represented by $(a_j^i, a_{jk}^i)$ where
	\[
		a_j^i = \frac{\partial x^i}{\partial y^j}(y(p)) 
			\quad \mbox{and} \quad 
		a^i_{jk} = \frac{\partial^2 x^i}{\partial y^j y^k} (y(p)) 
			\quad 
		(i,j,k\in \llbracket 1,m \rrbracket).
	\]
Let $(\overline{U}, \overline{x})$ and $(\overline{V},\overline{y})$ be the fiber charts on $R_n^2M$ induced by $(U,x)$ and $(V,y)$, respectively. The transition function between them is the mapping 
	\[
		\overline{x}\circ\overline{y}^{-1}: (U\cap V)\times F_{mn}^2\to(U\cap V)\times F_{mn}^2, 
			\quad 
		(p,f)\mapsto (p,a_{yx}(p)f).
	\]
This is our transformation law for the coefficients of solutions to RDEs. 
	
The rule associating $M\to R^2_nM$ is in fact a functor on the category of smooth manifolds, see \cite[Section 12.12]{KolMichSlo93}. Let us describe its action on morphisms. To this end, let $M$ and $M'$ be manifolds of dimension $m$ and $m'$, respectively. A smooth mapping $\phi: M\to M'$ maps to a fiber bundle morphism $R_n^2\phi: R_n^2M\to R_n^2M'$ given by $[(j_0^2\psi,f)]\mapsto [(j_{\psi(0)}^2(\phi\circ\psi),f)]$. For $\bm{f} = [(j_0^2\psi,f)]\in R_n^2M$ with image $\bm{g}=[(j_{\psi(0)}^2(\phi\circ\psi),f)]\in R_n^2M'$, let $p:=\psi(0)$ and let $(U,x)$ be a chart around $p$ and $(V,y)$ be a chart around $\phi(p)$. Denote the induced coordinates of $\bm{f}$ by $(x^i, f_\tau^i)$ and the induced coordinates of $\bm{g}$ by $(y^i, g_\tau^i)$. Then
	\[
		y^i  = \phi^i(p)
	\]
and 
	\[
		g_{\ta}^i = \frac{\partial \phi^i}{\partial x^j}(p)f_{\ta}^j, 
			\quad 
		g_{\tab}^i = \frac{\partial \phi^i}{\partial x^j}(p) f^j_{\tab} 
						+ \frac{\partial^2 \phi^i}{\partial y^j\partial y^k}(p) f^j_{\ta} f^k_{\tb}, 
			\quad 
		g^i_{\tc} = \frac{\partial \phi^i}{\partial x^j}(p) f^j_{\tc}
	\]
hold for every $i\in \llbracket 1,m' \rrbracket$ and every $\alpha,\beta\in \llbracket 1,n \rrbracket$. Due to the functoriality of the construction, we obtain the following

\begin{lemma}
\label{thm:r2-functorial}
Let $M, M'$, and $M''$ be manifolds and let $\phi: M\to M'$ and $\psi: M'\to M''$ be smooth maps. Then $R_n^2(\psi\circ\phi) = R^2_n\psi \circ R^2_n\phi$. 
\end{lemma}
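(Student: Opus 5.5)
The claim follows from the definition of $R^2_n$ on morphisms together with the chain rule for $2$-jets. Fix $\bm f=[(j_0^2\chi,f)]\in R^2_nM$; we use $\chi$ for the frame so as not to clash with the map $\psi$ in the statement. By definition,
\[
R^2_n\phi(\bm f)=[(j_{\chi(0)}^2(\phi\circ\chi),f)]\in R^2_nM'.
\]
Applying $R^2_n\psi$ to this class gives
\[
[(j^2_{\phi(\chi(0))}(\psi\circ\phi\circ\chi),f)],
\]
and this is exactly $R^2_n(\psi\circ\phi)(\bm f)$ by associativity of composition of maps.

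For this to make sense, we first have to check that the rule $[(j_0^2\chi,f)]\mapsto[(j^2(\phi\circ\chi),f)]$ is well defined on classes. Replace the representative by $(j_0^2(\chi\circ a),a^{-1}f)$ with $a\in G_m^2$. The image pair becomes $(j^2(\phi\circ\chi\circ a),a^{-1}f)$. We must show that this pair determines the same element of $R^2_nM'$ as $(j^2(\phi\circ\chi),f)$.

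The subtle point is that when $\phi$ is not a local diffeomorphism (for instance, when $\dim M\neq\dim M'$), $j^2(\phi\circ\chi)$ is not a frame of $M'$. The paper's description must therefore be read as follows. The first component is a jet from $\mathbb R^m$ to $M'$, and pairs $(j^2_0\eta,f)$ with $\eta:\mathbb R^m\to M'$ are identified with elements of $R^2_nM'$ through the jet-velocity composition of \autoref{def:jet-velocity_composition}. Concretely, in a chart $(V,y)$ at $\phi(p)$ the pair has coordinates $(y(\eta(0)),\,j_0^2(y\circ\eta)\,f)$, where $j_0^2(y\circ\eta)$ is translated to have target $0$ and lies in $L^2_{m'm}$.

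With this reading, well-definedness and independence of the chart both reduce to associativity of the jet-velocity composition, proved in the preceding lemma, together with the chain rule for $2$-jets, $j^2(\alpha\circ\beta)=j^2\alpha\, j^2\beta$. For example,
\[
j^2(y\circ\eta\circ a)\,a^{-1}f=\bigl(j^2(y\circ\eta)\,a\bigr)a^{-1}f=j^2(y\circ\eta)f.
\]

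I would therefore carry out the proof in induced coordinates, which is cleaner. Choose charts $(U,x)$ at $p$, $(V,y)$ at $\phi(p)$ and $(W,z)$ at $\psi(\phi(p))$, and write $\bm f$ as $(x(p),f)$ with $f\in F^2_{mn}$. The coordinate formulas stated before the lemma express $R^2_n\phi(\bm f)$ as $(y(\phi(p)),A f)$, where $A=j^2_{x(p)}(y\circ\phi\circ x^{-1})\in L^2_{m'm}$ (with translations to target $0$ understood). Likewise, with $B=j^2_{y(\phi(p))}(z\circ\psi\circ y^{-1})$,
\[
R^2_n\psi\bigl(R^2_n\phi(\bm f)\bigr)=\bigl(z(\psi(\phi(p))),\,B(Af)\bigr).
\]
Associativity gives $B(Af)=(BA)f$. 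The chain rule for $2$-jets, which in polynomial representatives is the composition formula for $c_j^i$ and $c_{jk}^i$, gives
\[
BA=j^2_{x(p)}\bigl(z\circ\psi\circ\phi\circ x^{-1}\bigr).
\]
Hence $(BA)f$ is precisely the coordinate expression of $R^2_n(\psi\circ\phi)(\bm f)$, and the two sides of the lemma agree.

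The main obstacle is only the bookkeeping that justifies applying these coordinate formulas, i.e. checking that $R^2_n\phi$ is well defined when $\phi$ is not a diffeomorphism. This is settled by associativity as above. The jet chain rule itself is standard: it is the Faà di Bruno formula at second order,
\[
\partial_{jk}(z\circ\psi)^i=\partial_{pq}z^i\,\partial_j\psi^p\,\partial_k\psi^q+\partial_p z^i\,\partial_{jk}\psi^p ,
\]
where here $z$ and $\psi$ stand for the outer and inner coordinate maps being composed.
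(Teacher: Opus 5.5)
Your proof is correct, but it follows a different route from the paper. The paper gives no argument of its own: it asserts that $M\mapsto R^2_nM$ is a functor ``due to the functoriality of the construction'' and refers to Kol\'a\v{r}--Michor--Slov\'ak.

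You instead verify the identity in induced charts. There it reduces to the associativity of the jet--velocity composition (the lemma just before) together with the second-order chain rule, i.e.\ $B(Af)=(BA)f$ with $BA=j^2_{x(p)}(z\circ\psi\circ\phi\circ x^{-1})$.

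The paper's route buys brevity. Yours is self-contained and, more to the point, covers maps that are not local diffeomorphisms. That is exactly the case used later: embeddings in \autoref{thm:r2-preserve-embedding} and the submersion $F$ in \autoref{thm:zero-set}. For such maps the defining formula $[(j_0^2\chi,f)]\mapsto[(j^2_{\chi(0)}(\phi\circ\chi),f)]$ does not produce an element of $P^2M'$. Functoriality of the frame bundle $P^2$, on the other hand, only makes sense for local diffeomorphisms between manifolds of equal dimension.

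Your reading of a pair $(j_0^2\eta,f)$ through its chart expression $(y(\eta(0)),\,j_0^2(y\circ\eta)f)$ is what gives the paper's formula a meaning in general. Independence of the representative and of the chart both follow from associativity and the chain rule.

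An equivalent way to see that the functor extends: the transformation laws identify $R^2_nM$ with $T^2_nM\times_M TM\times_M\cdots\times_M TM$. This is the bundle of $(n,2)$-velocities $J^2_0(\Rsf^n,M)$ fibred with $n^2$ copies of $TM$, and both factors are functorial for arbitrary smooth maps.
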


We will be interested in the structure of $R_n^2M$ when $M$ is an embedded submanifold of manifold $M'$. Let us prove that in this situation, $R_n^2M$ is an embedded submanifold of $R_n^2M'$. 

\begin{lemma}
\label{thm:r2-preserve-embedding}
Let $M, M'$ be manifolds and let $\phi: M\to M'$ be an embedding. Then the mapping $R_n^2\phi:R_n^2M\to R_n^2M'$ is also an embedding.
\end{lemma}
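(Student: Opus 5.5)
The plan is to verify that $R^2_n\phi$ is an injective immersion that is a homeomorphism onto its image, working in slice charts adapted to the embedding. Let $m=\dim M$ and $m'=\dim M'$. Since $\phi$ is an embedding, for every $p\in M$ there exist a chart $(U,x)$ around $p$ and a chart $(V,y)$ around $\phi(p)$ with $\phi(U)=\phi(M)\cap V$ and $y\circ\phi\circ x^{-1}(x^1,\ldots,x^m)=(x^1,\ldots,x^m,0,\ldots,0)$. This is the standard slice chart; shrinking $V$ is allowed because $\phi$ is a homeomorphism onto its image.

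First I compute $R^2_n\phi$ in the induced charts $(x^i,f^i_\tau)$ on $\overline U$ and $(y^i,g^i_\tau)$ on $\overline V$, using the coordinate formulas stated just before \autoref{thm:r2-functorial}. In slice coordinates the first derivatives of $\phi$ form the inclusion matrix $(\delta^i_j)$, and all second derivatives vanish. Hence the local expression is
\[
(x,f)\mapsto \big((x,0),\,(f,0)\big),
\]
where $g^i_\tau=f^i_\tau$ for $i\le m$ and $g^i_\tau=0$ for $i>m$. This map is a linear inclusion $\Rsf^m\times F^2_{mn}\to\Rsf^{m'}\times F^2_{m'n}$. It is therefore a smooth immersion and a homeomorphism onto its image, which is the closed slice $\{y^i=0,\ g^i_\tau=0 \text{ for } i>m\}$ of $\overline V$. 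In particular, $R^2_n\phi$ is an immersion everywhere, since the fiber charts cover $R^2_nM$.

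Next comes global injectivity. Suppose $R^2_n\phi(\bm f)=R^2_n\phi(\bm f')$. Applying the bundle projections gives $\phi(\pi\bm f)=\phi(\pi\bm f')$, so injectivity of $\phi$ shows that $\bm f$ and $\bm f'$ lie over the same point $p$. Both then lie in the single chart $\overline U$, where the local form is injective, so $\bm f=\bm f'$.

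Finally, the homeomorphism-onto-image property: I need $R^2_n\phi$ to be open onto its image. Take an open set $W\subseteq\overline U$ for a slice chart as above. Then $R^2_n\phi(W)$ is open in $R^2_n\phi(\overline U)$, which equals $R^2_n\phi(R^2_nM)\cap\overline V$ by the choice $\phi(U)=\phi(M)\cap V$ together with the projection argument. The sets $\overline U$ cover $R^2_nM$, so this suffices.

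The only potential subtlety is the last step: one has to make sure that points of the image lying over $\phi(M)\cap V$ come only from $\overline U$. This is exactly why the slice chart is chosen with $\phi(U)=\phi(M)\cap V$, which is possible precisely because $\phi$ is a topological embedding rather than merely an injective immersion.
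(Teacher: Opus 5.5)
Your proof is correct and takes essentially the same route as the paper: charts from the rank theorem with $\phi(M)\cap V=\phi(U)$, the coordinate computation showing $R^2_n\phi$ is the linear inclusion $(x,f)\mapsto((x,0),(f,0))$, injectivity via the bundle projection, and the identity $R^2_n\phi(R^2_nM)\cap\pi^{-1}(V)=R^2_n\phi(\pi^{-1}(U))$. The only difference is in the last step: you use that identity to check directly that $R^2_n\phi$ is a homeomorphism onto its image, whereas the paper uses it to verify the local slice condition for the image, and your version matches the definition of an embedding slightly more directly.
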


\begin{proof}
Denote by $m$ and $m'$ the dimensions of manifolds $M$ and $M'$, respectively. Let $p\in M$ be given. By the rank theorem (see, e.g., \cite[Theorem 4.12]{Lee12}), there exist charts $(U,x)$ around $p$ and $(V,y)$ around $\phi(p)$ such that $\phi(U)\subseteq V$ and in which the coordinate representation of $\phi$ is $(x^1,x^2,\ldots, x^m)\mapsto (x^1, x^2, \ldots, x^m, 0,0, \ldots, 0)$. Then the coordinate representation of $R_n^2\phi$ in the induced charts on $R_n^2M$ and $R_n^2M'$ maps $(x^i, f_\tau^i)$ to $(y^i, g_\tau^i)$ that is, for $i\in \llbracket 1, m' \rrbracket$  and $\tau\in\Fcal_n^2$ given by 
	\[
		y^i = \begin{cases}
				x^i, & \quad \mbox{if } i\leq m,\\
				0, & \quad \mbox{otherwise},
			   \end{cases}
		\qquad 
		g^i_{\tau} = \begin{cases}
						f_\tau^i, & \quad \mbox{if } i\leq m,\\
						0, & \quad\mbox{otherwise}.
					 \end{cases}
	\]
It follows that $R_n^2\phi$ is an immersion. Moreover, if $R_n^2\phi(\bm{f}) = R_n^2\phi(\bm{g})$ for $\bm{f},\bm{g}\in R_n^2M$, then $\phi(\pi\bm{f}) = \pi (R_n^2\phi(\bm{f})) = \pi(R_n^2\phi(\bm{g})) = \phi (\pi(\bm{g}))$ so that we have, by injectivity of $\phi$, that $\pi(\bm{f})=\pi(\bm{g})$ and it follows from the above coordinate representation taken around $p=\pi(\bm{f})$ that $\bm{f}=\bm{g}$. In other words, $R_n^2\phi$ is injective. Finally, because $\phi$ is an embedding, we may choose the charts $(U,x)$ and $(V,y)$ in such a way that $\phi(M)\cap V = \phi(U)$. Then $R_n^2\phi(R_n^2M)\cap \pi^{-1}(V) = R_n^2\phi (\pi^{-1}(U))$ which is a slice in $\pi^{-1}(V)$. This verifies the local slice condition (see, e.g., \cite[Theorem 5.8]{Lee12}) and it follows that the image of $R_n^2\phi$ is an embedded submanifold.
\end{proof}

\subsection{Rough velocity fields}

Let us now define the notion of rough velocity fields. We will see in \autoref{sec:RDEs} that they correspond to RDEs.

\begin{definition}
\label{def:rvf}
Let $n\in\Nsf$ and let $M$ be a manifold. A \emph{rough velocity field} is a smooth section of $R_n^2M$, i.e.\ a smooth function $f:M\to R_n^2M$ such that $(\pi\circ f)(p) = p$ holds for every $p\in M$. 
\end{definition}

\begin{remark}
Let $m,n\in\Nsf$, let $M$ be an $m$-dimensional manifold, and let $(U,x)$ be a chart on~$M$. If $f: M\to R_n^2M$ is a rough velocity field, then it has the coordinate representation $p\mapsto (p^i, f_\tau^i(p))$ for some smooth functions $f_\tau^i:U\to\Rsf$ ($i\in \llbracket 1, m \rrbracket$, $\tau\in \Fcal_n^2$) in the induced chart $(\overline{U},\overline{x})$ on $R_n^2M$.
\end{remark}

Let $m,n\in\Nsf$ and let $M$ be an $m$-dimensional manifold. The element $0\in F_{mn}^2$ is invariant under the action of $G_m^2$ and therefore, each fiber $\pi^{-1}(p)$ of $R_n^2M$ has a distinguished element whose coordinate representation in any fiber chart is $(p,0)$. We will denote this element $0_p$. 

\begin{definition}
\label{def:support-rvf}
Let $n\in\Nsf$, let $M$ be a manifold, and let $f: M\to  R_n^2M$ be a rough velocity field. By the \emph{support} of $f$ we mean the closure of the set $\{p\in M: f(p)\neq 0_p\}$.
\end{definition}

Similarly to vector fields, we have a notion of relatedness which allows us to transform solutions to RDEs between manifolds.

\begin{definition}
\label{def:related-rvf}
Let $n\in\Nsf$, let $M$ and $M'$ be manifolds, $\phi: M\to M'$ be a smooth map, $f:M\to R_n^2M$ be a rough velocity field on $M$, and $g: M'\to R_n^2M'$ be a rough velocity field on $M'$. We say $g$ \emph{$\phi$-related} to $f$ if $g(\phi(p))=R^2_n\phi(f(p))$ holds for all $p\in M$. 
\end{definition}

In \autoref{sec:M-existence-uniqueness}, we will prove the existence and uniqueness of solutions to RDEs on manifolds. This will be done by gluing solutions constructed in coordinate patches. To make such a procedure feasible, we will need a mechanism that  allows the restriction of rough velocity fields to coordinate patches. This is not trivial because $R_n^2M$ is not a vector bundle and we cannot simply multiply rough velocity fields by bump functions. In what follows, we describe a suitable operation.

\begin{definition}
Let $m,n\in\Nsf$, let $M$ be an $m$-dimensional manifold, and let $(U,x)$ be a chart on $M$. Let $f: M\to R_n^2M$ be a rough velocity field whose representation in the induced chart $(\overline{U},\overline{x})$ on $R_n^2M$ is $(f^i_\tau)$. Let also $\delta: M\to\Rsf$ be a smooth function. A \emph{rescaling} of $f$ by $\delta$ is the rough velocity field $\widetilde{f}: M\to R_n^2M $ whose representation $(\widetilde{f}^i_\tau)$ in chart $(\overline{U},\overline{x})$ is given by 
	\begin{equation}
	\label{eq:rescaling}
		\widetilde{f}^i_{\ta} = \delta f^i_{\ta}, 
			\quad 
		\widetilde{f}^i_{\tab} = \delta^2f^i_{\tab},
			\quad \mbox{and} \quad 
		\widetilde{f}^i_{\tc} = \frac{\partial \delta}{\partial x^j}f^j_{\ta}\delta f^i_{\tb} + \delta^2f^i_{\tc}
	\end{equation}
for $i\in \llbracket 1,m \rrbracket$ and $\alpha,\beta\in\llbracket 1,n \rrbracket$.
\end{definition}

\begin{remark}
Rescaling of a rough velocity field by a smooth function is an intrinsic notion: Let $(U,x)$ and $(V,y)$ be charts on manifold $M$ and let $(f^i_\tau)$ and $(g^i_\tau)$ be the representations of a rough velocity field $M\to R_n^2M$ in the induced charts $(\overline{U},\overline{x})$ and $(\overline{V},\overline{y})$ on $R_n^2$, respectively. Then on $U\cap V$, we have 
	\[
		\widetilde{g}^i_{\ta} 
			= \frac{\partial y^i}{\partial x^j}\widetilde{f}^j_{\ta} 
			= \frac{\partial y^i}{\partial x^j} \delta f^j_{\ta} 
			= \delta g^i_{\ta}
	\]
and, similarly, 
	\[
 		\widetilde{g}^{i}_{\tab} 
 	 		= \frac{\partial y^{i}}{\partial x^{j}}\widetilde{f}^{j}_{\tab} 
 	 			+ \frac{\partial^{2} y^{i}}{\partial x^{j}\partial x^{k}}\widetilde{f}^{j}_{\ta}\widetilde{f}^{k}_{\tb}
  			= \delta^{2}
				\left(
 					\frac{\partial y^{i}}{\partial x^{j}}f^{j}_{\tab} 
 						+ \frac{\partial^{2} y^{i}}{\partial x^{j}\partial x^{k}}f^{j}_{\ta}f^{k}_{\tb}
  				\right)
  			= \delta^{2}g^{i}_{\tab},
	\]
and 
	\[
  		\widetilde{g}^{i}_{\tc} 
  			= \frac{\partial y^{i}}{\partial x^{k}}\widetilde{f}^{k}_{\tc}
 			= \frac{\partial y^{i}}{\partial x^{k}}
  				\left(
 					\frac{\partial \delta}{\partial x^{j}}f^{j}_{\ta}\delta f^{k}_{\tb} 
 						+ \delta^{2}f^{k}_{\tc}
 				 \right)
  			= \frac{\partial y^{i}}{\partial x^{k}}
 				\left(
  					\frac{\partial \delta}{\partial y^{\ell}}\frac{\partial y^{\ell}}{\partial x^{j}}f^{j}_{\ta}\delta f^{k}_{\tb} 
 					 	+ \delta^{2}f^{k}_{\tc}
				\right)
  			= \frac{\partial \delta}{\partial y^{\ell}}g^{\ell}_{\ta}\delta g^{i}_{\tb} 
  				+ \delta^{2}g^{i}_{\tc}
	\]
for every $i\in \llbracket 1,m \rrbracket$ and $\alpha,\beta\in\llbracket 1,n\rrbracket$.
\end{remark}

If $M=\Rsf^m$, we may identify a rough velocity field $f$ with its global coordinate representation and view it as a tuple $(f_{\tau}^i)$ of functions $f_\tau^i:\Rsf^m\to\Rsf$ ($i\in \llbracket 1,m\rrbracket$, $\tau\in\Fcal_n^2$). In this way, it will be possible to speak about its $C^k$-norm.

\begin{definition}
Let $m,n\in\Nsf$ and let $f=(f^i_\tau)$ be the representation of a rough velocity field $f: \Rsf^m\to R_n^2\Rsf^m$ in the global chart. For $k\in\Nsf_0$, the $C^k$-norm of $f$ is then defined by
	\[
		\|f\|_k := \sum_{i=1}^m \sum_{\tau\in \Fcal_n^2} \sum_{|\alpha|\leq k} \sup_{x\in\Rsf^m} |D^\alpha f_{\tau}^i (x)|
	\]
where we use the standard multiindex notation (see, e.g., \cite{Tri92}).
\end{definition}

Finally, we give examples of rough velocity fields that arise naturally from a vector field.

\begin{example}
\label{ex:canonical-rvf}
Let $m,n\in\Nsf$ and let $f^i_{\alpha}:\Rsf^m\to\Rsf$ (for $i\in \llbracket 1,m \rrbracket$ and $\alpha\in \llbracket 1,n \rrbracket$) be smooth functions. Then we can define a rough velocity field $f: \Rsf^m\to R_n^2\Rsf^m$ by defining its coordinate representation in the global chart on $\Rsf^m$ as
\begin{equation}
\label{eq:canonical-rvf} 
    f^i_{\ta}:= f^i_\alpha, 
        \quad 
    f^i_{\tc}:= \frac{\partial f^i_\beta}{\partial x^j}f^j_{\alpha}
        \quad 
    f^i_{\tab}:= 0, 
\end{equation}
for $i\in \llbracket 1,m \rrbracket$ and $\alpha,\beta\in \llbracket 1,n \rrbracket$. Solutions of RDEs corresponding to rough velocity fields of this form are solutions in the Davie sense; see \autoref{def:davie-solution}.
\end{example}

\begin{example}
\label{ex:connection-rvf}
More generally, we can define a rough velocity field as follows: Let $m,n\in\Nsf$, $M$ be an $m$-dimensional manifold, $\nabla$ be a torsion-free affine connection on $M$, $f_{\alpha}$ (for $\alpha\in \llbracket 1,n \rrbracket$) be smooth vector fields on $M$, and let $(U, x)$ be a chart on $M$. On $U$, define
\begin{equation*}
    f^{i}_{\ta} := f^{i}_{\alpha},
        \quad
    f^{i}_{\tc} := f^{j}_{\alpha}\left(\frac{f^{i}_{\beta}}{\partial x^{j}} + \Gamma^{i}_{jk}f^{k}_{\beta}\right),
        \quad
    f^{i}_{\tab} := -\Gamma^{i}_{jk}f^{j}_{\alpha}f^{k}_{\beta},
\end{equation*}
for $i\in \llbracket 1,m \rrbracket$, $\alpha,\beta\in \llbracket 1,n \rrbracket$, where $(f^i_\alpha)$ is the coordinate representation of $f_\alpha$ in the chart $(U,x)$ and where $\Gamma^{i}_{jk}$ are the Christoffel symbols given by
\begin{equation*}
    \nabla_{\frac{\partial}{\partial x^{j}}}\frac{\partial}{\partial x^{k}} = \Gamma^{i}_{jk}\frac{\partial}{\partial x^{i}}.
\end{equation*}
A routine calculation shows that the above expression for $(f^i_\tau)$ is independent of the chosen chart and thus defines a rough velocity field $f: M\to R_n^2M$.
\end{example}

\section{RDEs on manifolds}
\label{sec:RDEs}

In this section, we define a concept of solutions to RDEs---the so-called integral curves. These generalize Davie solutions and are invariant with respect to coordinate changes which allows us to consider RDEs on (smooth, finite-dimensional) manifolds driven by branched rough paths. We describe the class of RDEs---the admissible rough velocity fields---that will be solvable within our framework and give the local existence and uniqueness results. We also give a sufficient condition for the existence of a global solution. 

\subsection{Integral curves}
\label{sec:integral-curves}

Initially, we define a suitable concept of solutions to RDEs on manifolds that will be invariant under smooth changes of coordinates. As we have seen in \autoref{rem:invariance_of_davie_solutions}, Davie solutions that satisfy~\eqref{eq:davie} do not, in general, have this property. Therefore, we will impose condition~\eqref{eq:full-davie} instead. To simplify notation, let us introduce the \emph{symmetry factor} $\sigma(\tau)$ defined as the number of label-preserving automorphisms of $\tau \in\Fcal_n^2$. Explicitly, we have 
	\[
		\sigma(\ta) = 1, 
			\quad 
		\sigma(\tab) = \begin{cases} 
							1, & \quad \alpha\neq \beta,\\
							2, & \quad \alpha=\beta, 
						\end{cases} 
			\quad\mbox{and}\quad 
		\sigma(\tc) = 1
	\]
for $\alpha,\beta\in \llbracket 1,n \rrbracket$. The notion of an integral curve is defined now.

\begin{definition}
\label{def:integral-curve}
Let $m,n\in\Nsf$, let $[S,T]\subseteq\Rsf$ be an interval, let $p\in [1,3)$, and let $\omega$ be a control function on the interval $[S,T]$. Let $X=(X^\tau)$ be an $n$-dimensional branched rough path on $[S,T]$ with regularity $p$ and control $\omega$. Let $M$ be an $m$-dimensional manifold and let $f: M\to R_n^2M$ be a rough velocity field. We say that a function $Y: [S,T]\to M$ is an \emph{integral curve} of the rough velocity field $f$ driven by the rough path $X$ if it is continuous and if for every compact interval $I\subseteq [S,T]$ and every chart $(U,x)$ around $Y(I)$ there exists $C\in (0,\infty)$ such that the inequality
		\begin{equation}
		\label{eq:integral-curve}
			\left|
				Y^i(t) - Y^i(s) 
				- \sum_{\tau\in \Fcal_n^2} \frac{1}{\sigma(\tau)} f_{\tau}^i(Y(s)) X^{\tau}(s,t) 
			\right| 
			\leq C\omega(s,t)^\frac{3}{p}
		\end{equation}
holds for every $i\in \llbracket 1,m \rrbracket$ and $(s,t)\in\Delta I$. Here, $Y^i=x^i\circ Y$ ($i\in \llbracket 1,m \rrbracket$) are the components of the coordinate representation of $Y$ in chart $(U,x)$ and $f_\tau^i$ ($i\in \llbracket 1,m \rrbracket$ and $\tau\in\Fcal_n^2$) are the components of the representation of $f$ in the induced chart $(\overline{U},\overline{x})$ on $R_n^2M$.
\end{definition}

The following example relates integral curves to Davie solutions to RDEs.

\begin{example}
Let $m,n\in\Nsf$, let $[S,T]\subseteq\Rsf$ be an interval, let $p\in [1,3)$, $x\in C^{p-\mathrm{var}}([S,T],\Rsf^n)$, and let $f_\alpha^i:\Rsf^m\to\Rsf$ (for $i\in \llbracket 1,m \rrbracket$ and $\alpha\in \llbracket 1,n \rrbracket$) be smooth functions. Let also $Y_S\in\Rsf^m$. Consider the system of controlled differential equations~\eqref{eq:ode}. Let $X=(X^\tau)$ be a rough path lift of function $x$ (see \autoref{ex:natural-lift}) and let $f=(f^i_\tau)$ be the rough velocity field on $\Rsf^m$ defined by~\eqref{eq:canonical-rvf}. Then a function $Y:[S,T]\to\Rsf^m$ is a Davie solution to problem~\eqref{eq:ode} if and only if $Y$ is an integral curve of $f$ driven by $X$ satisfying $Y(S)=Y_S$. 
\end{example}

The next example demonstrates the need for a different constant on each compact interval $I$.

\begin{example}
Let $X=(X^\tau)$ be the (scalar) branched rough path with regularity $p=1$ and control $\omega(s,t):=t-s$, $(s,t)\in \Delta[0,1]$, built from the smooth path $x(t)=t$ on the interval $[0,1]$ as in \autoref{ex:natural-lift}. Let $M:=\Rsf$ and consider the rough velocity field $(f_\tau)$ given by 
	\[
		f_{\tao}(y):=1, 
			\quad 
		f_{\taboo}(y):=0, 
			\quad 
		f_{\tcoo}(y):=0
	\]
for $y\in\Rsf$ in the global chart. One would naturally expect the path $Y(t) := t$, $t\in [0,1]$, to be an integral curve of $f$ driven by $X$. In the standard chart, we have 
	\[
		Y(t) - Y(s) - \sum_{\tau\in\Fcal_1^2} \frac{1}{\sigma(\tau)} f_{\tau}(Y(s)) X^{\tau}(s,t) 
			= t-s - 1(t-s) 
			= 0
	\]
for $(s,t)\in \Delta[0,1]$. In the chart $((0,\infty), \log)$, the rough velocity field $f$ is given by 
	\[
		f_{\tao}(y) = y^{-1}, 
			\quad 
		f_{\taboo}(y) = -y^{-2}, 
			\quad 
		f_{\tcoo}(y) = 0
	\]
for $y\in (0,\infty)$ and so
	\[
		Y(t)-Y(s) - \sum_{\tau\in\Fcal_1^2} \frac{1}{\sigma(\tau)} f_{\tau}(Y(s)) X^{\tau}(s,t) 
			= \log(t) - \log(s) - \frac{1}{s}(t-s) + \frac{1}{2s^2}(t-s)^2
	\]
for $(s,t)\in \Delta(0,1]$. If $s\in (0,1/2]$ and $t=2s$, we obtain 
	\[
		\omega(s,t)^{-\frac{3}{p}} 
		\left|
			Y(t)-Y(s) 
			- \sum_{\tau\in\Fcal_1^2} \frac{1}{\sigma(\tau)} f_{\tau}(Y(s)) X^{\tau}(s,t)
		\right| 
		= \frac{\log (2)-\frac{1}{2}}{s^3}
	\]
which grows without bound as $s\to 0+$. 
\end{example}

Being an integral curve of a rough velocity field driven by a rough path is a local property. It suffices to check~\eqref{eq:integral-curve} in one chart in a neighborhood of each $r\in [S,T]$. To prove this claim, we first give the following

\begin{lemma}
\label{thm:integral-curve-delta}
Let $m,n\in\Nsf$, $I\subseteq\Rsf$ be a compact interval, $p\in[1,3)$, and $\omega$ a control function on interval $I$. Let $X=(X^\tau)$ be an $n$-dimensional branched rough path on $I$ with regularity $p$ and control $\omega$. Let $M$ be an $m$-dimensional manifold and let $f: M\to R_n^2M$ be a rough velocity field. Let $Y:I\to M$ be a continuous function, $\widetilde{C}\in (0,\infty)$, $\delta\in (0,\infty)$, and let $(U,x)$ be a chart around $Y(I)$ such that inequality~\eqref{eq:integral-curve} with $\widetilde{C}$ in place of $C$ holds for all for every $i\in \llbracket 1,m \rrbracket$ and every $(s,t)\in\Delta I$ satisfying $\omega(s,t)<\delta$. Then there exists a constant $C\in (0,\infty)$ such that inequality~\eqref{eq:integral-curve} holds for every $i\in \llbracket 1,m \rrbracket$ and every $(s,t)\in \Delta I$ .
\end{lemma}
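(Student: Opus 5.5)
The plan is to repeat the compactness argument from the end of the proof of \autoref{thm:ch-var}, with a little extra care. The function
\[
	R^i(s,t) := Y^i(t) - Y^i(s) - \sum_{\tau\in\Fcal_n^2} \frac{1}{\sigma(\tau)} f_\tau^i(Y(s)) X^\tau(s,t)
\]
should be bounded on all of $\Delta I$, and $\omega$ should be bounded away from zero on the set where the assumed estimate fails. If both hold, the ratio $|R^i(s,t)|\,\omega(s,t)^{-3/p}$ is bounded there.

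The first step is to bound $R^i$ on $\Delta I$. The trajectory $Y(I)$ is compact because $Y$ is continuous and $I$ is compact. Hence each $Y^i = x^i\circ Y$ is bounded, and each $f^i_\tau\circ Y$ is bounded, since $f^i_\tau$ is continuous on $U$ and $Y(I)\subseteq U$.

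The rough path components are also bounded. By \eqref{eq:rp_regularity}, $|X^\tau(s,t)| \le \|X\|\,\omega(s,t)^{k/p}$ with $k\in\{1,2\}$. By superadditivity and nonnegativity, $\omega(s,t)\le \omega(\min I,\max I)=:W<\infty$. Consequently $\sup_{\Delta I}|R^i| =: K <\infty$.

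I would deliberately avoid relying on continuity of $X^\tau$. The definition does not state it, and the bound via $\omega$ suffices.

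The second step handles pairs with $\omega(s,t)\ge\delta$. For such pairs, $|R^i(s,t)| \le K \le K\delta^{-3/p}\,\omega(s,t)^{3/p}$.

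For pairs with $\omega(s,t)<\delta$, the hypothesis gives the constant $\widetilde C$ directly. Taking $C := \max\{\widetilde C,\, K\delta^{-3/p}\}$ then covers all $(s,t)\in\Delta I$ and all $i\in\llbracket 1,m\rrbracket$.

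There is no real obstacle here. The only point needing care is the uniform boundedness of the terms $f^i_\tau(Y(s))X^\tau(s,t)$. I would justify it through the regularity bounds and the finiteness of $\omega$ on the compact interval, rather than through an appeal to continuity of $R^i$ together with compactness of $\{\omega\ge\delta\}$.
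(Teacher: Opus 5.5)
Your proof is correct. It has the same skeleton as the paper's: use the hypothesis on $\{\omega<\delta\}$, and bound the ratio $\omega(s,t)^{-3/p}|R^i(s,t)|$ separately on $\{\omega\ge\delta\}$.

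The difference is only in that second step. The paper observes that $\{(s,t)\in\Delta I:\omega(s,t)\ge\delta\}$ is compact and that the ratio is continuous there, so it attains a maximum. You instead bound $R^i$ uniformly on all of $\Delta I$, using
\begin{itemize}
\item continuity of $Y$, and of $f^i_\tau$ on the compact set $Y(I)$,
\item the regularity estimates \eqref{eq:rp_regularity},
\item $\omega(s,t)\le\omega(\min I,\max I)$,
\end{itemize}
and then use $\omega^{-3/p}\le\delta^{-3/p}$.

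Your route gives an explicit constant and, as you note, does not need continuity of the $X^\tau$. The paper uses that continuity tacitly, but this is harmless, because it follows from \autoref{def:rough_path}. Additivity \eqref{eq:rp-additive} and Chen's relation \eqref{eq:rp-chen} write increments of $X^{\ta}$ and $X^{\tc}$ in terms of values of $X$ on short intervals. Those values tend to zero by \eqref{eq:rp_regularity}, together with the continuity of $\omega$ and $\omega(s,s)=0$. So both arguments are complete; yours is slightly more self-contained.
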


\begin{proof}
The set $\{ (s,t)\in \Delta I: \omega(s,t)\geq \delta\}$ is compact so that for $i\in\llbracket 1,m \rrbracket$, the continuous function 
	\[ 
		(s,t) \mapsto \omega(s,t)^{-\frac{3}{p}} 
							\left| 
								Y^i(t) - Y^i(s) 
								- \sum_{\tau\in\Fcal_n^2} \frac{1}{\sigma(\tau)} f_\tau^i(Y(s))X^\tau(s,t)  
							\right| 
	\]
attains its maximum there. If the maximum is $\widetilde{C}_i$, set $C:=\max\{\widetilde{C}, \widetilde{C}_1,\widetilde{C}_2,\ldots, \widetilde{C}_m\}$.
\end{proof}

\begin{lemma}
\label{thm:int-curve-local}
Let $m,n\in\Nsf$, $[S,T]\subseteq\Rsf$ be an interval, $p\in[1,3)$, and $\omega$ a control function on interval $[S,T]$. Let $X=(X^\tau)$ be an $n$-dimensional branched rough path on $[S,T]$ with regularity $p$ and control $\omega$. Let $M$ be an $m$-dimensional manifold and let $f: M\to R_n^2M$ be a rough velocity field. Let $Y:[S,T]\to M$ be a continuous function such that for every $r\in [S,T]$ there exists
\begin{enumerate}[label=(\alph*)]
\itemsep0em
    \item an interval $[u,v] \subset [S,T]$ that contains $r$ in its relative interior and such that we either have $v = T$ or $\omega(r,v) > 0$,
    \item a chart $(U, x)$ whose domain contains the image of $[u,v]$ under $Y$, and
    \item a constant $C\in (0,\infty)$
\end{enumerate}
such that inequality~\eqref{eq:integral-curve}, where $(Y^i)$ is the representation of $Y$ in the chart $(U,x)$ and $(f^i_\tau)$ is the representation of $f$ in the induced chart $(\overline{U},\overline{x})$, holds for every $i\in \llbracket 1,m \rrbracket$ and $(s,t)\in\Delta [u,v]$. Then $Y$ is an integral curve of the rough velocity field $f$ driven by the rough path $X$.
\end{lemma}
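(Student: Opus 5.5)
Fix a compact interval $I\subseteq[S,T]$ and a chart $(V,y)$ around $Y(I)$. We must produce a constant $C$ for which \eqref{eq:integral-curve} holds in the chart $y$ for all $(s,t)\in\Delta I$. The plan has three stages: transfer the hypothesised local estimates into the single chart $y$, cover $I$ by finitely many pieces on which this transferred estimate holds, and then use \autoref{thm:integral-curve-delta} to pass from small pairs to all pairs.

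\emph{Step 1 (change of chart).} For each $r\in I$ take $[u_r,v_r]$, $(U_r,x_r)$ and $C_r$ as in the hypothesis. By continuity of $Y$, choose a subinterval $J_r\ni r$, relatively open in $I$ and contained in $[u_r,v_r]$, and a convex open set $B_r\subseteq x_r(U_r\cap V)$ such that $x_r(Y(\overline{J_r}))\subseteq B_r$. The estimate \eqref{eq:integral-curve} in the chart $x_r$ restricts to $\Delta\overline{J_r}$. Now apply \autoref{thm:ch-var} with $U=B_r$ and $\phi=y\circ x_r^{-1}$. The coefficients $g_\tau$ it produces are exactly the jet--velocity composition $a_{yx_r}f$, that is, the components of $f$ in the induced chart $(\overline{V},\overline{y})$. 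One must check that the factor $\tfrac12$ in \autoref{thm:ch-var} matches the $1/\sigma(\tau)$ convention; this should follow once the $\tab$ and $\tba$ terms are grouped. Hence \eqref{eq:integral-curve} holds in the chart $y$ on $\Delta\overline{J_r}$ with some constant $\widetilde C_r$.

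\emph{Step 2 (finite cover).} By compactness, $I$ is covered by finitely many of these pieces, say $J_{r_1},\dots,J_{r_N}$. Set $\widetilde C:=\max_k\widetilde C_{r_k}$. Every pair $(s,t)$ contained in a single $J_{r_k}$ then satisfies the estimate in the chart $y$ with constant $\widetilde C$.

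\emph{Step 3 (from small pairs to all pairs).} It now suffices, by \autoref{thm:integral-curve-delta}, to find $\delta>0$ such that every $(s,t)\in\Delta I$ with $\omega(s,t)<\delta$ either lies in some single $J_{r_k}$ or satisfies the estimate for another reason. I expect this to be the main obstacle. Smallness of $\omega(s,t)$ does not force $t-s$ to be small, because $\omega$ may vanish on a nondegenerate interval. I would argue by contradiction. Take pairs $(s_k,t_k)$ with $\omega(s_k,t_k)\to0$ that are not covered, and pass to a subsequence converging to some $(s,t)$ with $\omega(s,t)=0$.

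If $s=t$, then eventually $(s_k,t_k)$ lies in the cover element containing $s$. This uses condition (a), which makes the interval around $r$ extend to the right whenever $\omega$ increases there, so that $s$ is interior to some $J_{r_k}$ in the relevant sense.

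If $s<t$, then by superadditivity $\omega\equiv0$ on $\Delta[s,t]$. The regularity bounds \eqref{eq:rp_regularity} then give $X^\tau\equiv0$ there, and chaining the local estimates across the finite cover shows that $Y$ is constant on $[s,t]$.

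In that second case I would treat $(s_k,t_k)$ directly rather than derive a contradiction. Split $[s_k,t_k]$ into a short piece near $s$, the flat middle, and a short piece near $t$. Combine the local expansions on the end pieces using Chen's relation \eqref{eq:rp-chen} and the multiplicativity \eqref{eq:rp-multiplicative}, with the middle contributing nothing. Then check that the cross terms coming from re-expanding the coefficients at $s_k$ are $O(\omega(s_k,t_k)^{3/p})$, using the smoothness of $f_\tau$ and $|Y(t')-Y(s')|\lesssim\omega(s',t')^{1/p}$. Once the small-$\omega$ pairs are controlled in this way, \autoref{thm:integral-curve-delta} yields the global constant $C$.
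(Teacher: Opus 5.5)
\textbf{Gap in Step 3, case $s<t$.} When you glue the expansions on $[s_k,s]$ and $[t,t_k]$ across the flat middle, Chen's relation \eqref{eq:rp-chen}, \eqref{eq:rp-multiplicative} and the symmetry of $f_{\tab}$ produce a cross term. Smoothness of $f$ and $|Y(t')-Y(s')|\lesssim\omega(s',t')^{1/p}$ reduce it only as follows (using $Y(t)=Y(s)$ and $X^{\ta}(s_k,t)=X^{\ta}(s_k,s)$):
\[
\Big(f^i_{\tb}(Y(s))-f^i_{\tb}(Y(s_k))-\big(f^i_{\tab}+f^i_{\tc}\big)(Y(s_k))\,X^{\ta}(s_k,s)\Big)X^{\tb}(t,t_k)
=\Big(\frac{\partial f^i_{\tb}}{\partial x^j}f^j_{\ta}-f^i_{\tab}-f^i_{\tc}\Big)(Y(s_k))\,X^{\ta}(s_k,s)X^{\tb}(t,t_k)+O\big(\omega(s_k,t_k)^{3/p}\big).
\]
The leading term has size $\omega(s_k,s)^{1/p}\omega(t,t_k)^{1/p}$. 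This is not $O(\omega(s_k,t_k)^{3/p})$: take $\omega(s_k,s)=\omega(t,t_k)=\varepsilon$. It vanishes only if $f$ is admissible, and this lemma is stated for arbitrary rough velocity fields; admissibility is introduced only later.

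This is not a missing estimate that could be supplied. Your argument never really uses the clause ``$v=T$ or $\omega(r,v)>0$'' of (a). In the case $s=t$ you need only that the $J_{r_k}$ are relatively open, whatever you say about (a) there. Without that clause the statement is false. Take $M=\Rsf$, $n=2$, $p=1$, and the canonical lift of $x^1(t)=t\wedge 1$, $x^2(t)=(t-2)\vee 0$ on $[0,3]$, with $\omega$ the $1$-variation of $x$. Take the constant field with $f_{\ta}=1$ for $\alpha\in\{1,2\}$, $f_{\tcjd}=c\neq 0$, and all other components zero; it is not admissible. For $Y=x^1+x^2$ the remainder in \eqref{eq:integral-curve} is $-c\int_s^t(x^1_r-x^1_s)\d{x}^2_r$. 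It vanishes identically on $\Delta[0,1.9]$ and on $\Delta[1.1,3]$, so every $r$ has an interval as in your Steps 1--2. Yet at $(s,t)=(1-\varepsilon,2+\varepsilon)$ the remainder is $-c\varepsilon^2$ while $\omega(s,t)^{3}=8\varepsilon^3$. Your Steps 1--3 would ``prove'' that this $Y$ is an integral curve, so the gluing step must be where the argument breaks.

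\textbf{Missing idea.} Spend the clause of (a) exactly at the flat region, instead of shrinking $[u_r,v_r]$ by continuity (your Step 1 is what throws this information away). In your compactness argument, suppose $(s_k,t_k)\to(s,t)$ with $s<t$ and $\omega(s,t)=0$. The interval $[u,v]$ given by the hypothesis at $r=s$ has either $v=T$ or $\omega(s,v)>0=\omega(s,t)$, hence $v>t$. So $[s_k,t_k]\subseteq[u,v]$ for large $k$, and the hypothesised estimate applies directly with no gluing. For the chart change, note that $Y$ is constant on $[s,t]$, so a slight enlargement of $[s,t]$ maps into a small convex coordinate ball.

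The paper does the same thing without contradiction. It keeps the full intervals and picks $\tilde v_r\in(r,v_r)$ so that the buffer $[\tilde v_r,v_r]$ has $\omega$-mass $\delta_r>0$. It then covers $I$ by finitely many $[u_{r_k},\tilde v_{r_k}]$. If $s\in[u_{r_k},\tilde v_{r_k}]$ and $\omega(s,t)$ is below the minimum of $\delta_{r_k}$ over those $k$ with $v_{r_k}\neq T$, then $t\le v_{r_k}$. One change-of-variables estimate per $k$ plus Lemma~\ref{thm:integral-curve-delta} then finish the proof.
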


\begin{proof}
Let $I\subseteq [S,T]$ be a compact interval and let $(V,y)$ be a chart around $Y(I)$. For every $r \in [S,T]$, let $[u_r, v_r]$,  $(U_r,x_r)$ and $C_r \in (0, \infty)$ be as in the statement of the lemma. Denote $\delta_r = \omega(r, v_r) / 2$. We can shrink $U_r$ to ensure that $U_r\subseteq V$ and that $x_r(U_r)$ is a convex set. We can further find $\tilde{v}_r \in (r, v_r)$ such that $\omega(r, \tilde{v}_r) = \omega(\tilde{v}_r, v_r) = \delta_r$ and such that the relative interiors of $[u_r, \tilde{v}_r]$ cover $[S,T]$. Then there exist a finite number of points, say $r_1,r_2, \ldots, r_K\in I$, such that $I\subseteq \bigcup_{k=1}^K [u_{r_k}, \tilde{v}_{r_k}]$.
Let $\delta = \min \{\delta_{r_k} : v_k \neq T \}$. Then $\delta > 0$. Let $(s,t) \in \Delta I$ be such that $\omega(s,t) < \delta$. Then there exists $k \in \llbracket 1, K \rrbracket$ be such $s \in [u_{r_k}, \tilde{v}_{r_k}]$. Either $v_{r_k} = T$ or $v_{r_k} < T$. In the first case, clearly $t \in [u_{r_k}, v_{r_k}]$. In the second case, we must also have $t \in [u_{r_k}, v_{r_k}]$, for if $t > v_{r_k}$, then we would have $\delta \le \delta_{r_k} = \omega(\tilde{v}_{r_k}, v_{r_k}) \le \omega(s, t) < \delta$.

Applying \autoref{thm:ch-var} to the smooth function $y\circ x^{-1}_{r_k}$ and to the representation of the path $Y$ restricted to $[u_{r_k}, v_{r_k}]$ in chart $(U_{r_k},x_{r_k})$ yields a constant $\tilde{C}_{k}\in (0,\infty)$ such that 
\begin{equation*}
    \left|
        Y^i(t) - Y^i(s)
        - \sum_{\tau\in\Fcal_n^2} \frac{1}{\sigma(\tau)} f_\tau^i(Y(s)) X^\tau(s,t)
    \right|
    \leq \tilde{C}_k \omega(s,t)^\frac{3}{p}
\end{equation*}
holds for every $i\in\llbracket 1,m\rrbracket$ and $(s,t)\in\Delta [u_{r_k}, v_{r_k}]$, where $(Y^i)$ is the coordinate representation of $Y$ in chart $(V,y)$ and $(f^i_\tau)$ is the coordinate representation of $f$ in the induced chart $(\overline{V}, \overline{y})$ on $R_n^2M$. Appealing to \autoref{thm:integral-curve-delta} (with $\tilde{C}:= \max_{k\in \llbracket 1,\tilde{K}\rrbracket} \tilde{C}_k$) completes the proof.
\end{proof}

\begin{lemma}
\label{thm:related-fields}
Let $m,m',n\in\Nsf$, let $[S,T]\subseteq\Rsf$ be an interval, let $p\in [1,3)$, and let $\omega$ be a control function on interval $[S,T]$. Let $X$ be an $n$-dimensional branched rough path on $[S,T]$ with regularity $p$ and control $\omega$. Let $M, M'$ be two manifolds of dimensions $m$ and $m'$, respectively, let $\phi: M\to M'$ be a smooth map, and let $f:M\to R_n^2M$ be a rough velocity field on $M$ and $g: M'\to R_n^2M'$ be a rough velocity field on $M'$ that is $\phi$-related to $f$. If $Y:[S,T]\to M$ is an integral curve of $f$ driven by $X$, then $\phi\circ Y:[S,T]\to M'$ is an integral curve of $g$ driven by $X$. 
\end{lemma}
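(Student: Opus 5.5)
The plan is to reduce to a local statement and then apply \autoref{thm:int-curve-local} to $\phi\circ Y$ on $M'$. Fix $r\in[S,T]$. Choose a chart $(V,y)$ on $M'$ around $\phi(Y(r))$, and a chart $(U,x)$ on $M$ around $Y(r)$ with $\phi(U)\subseteq V$; shrink $U$ so that $x(U)$ is convex. By continuity of $Y$, there is an interval $[u,v]\ni r$ with $r$ in its relative interior (relative to $[S,T]$) and $Y([u,v])\subseteq U$. If $v<T$ and $\omega(r,v)=0$, enlarge $v$ slightly while keeping $Y([u,v])\subseteq U$. If that does not give $\omega(r,v)>0$, then $\omega$ vanishes on a right neighbourhood of $r$, so we instead pick $v$ as large as allowed by continuity. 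Then $Y$ maps $[u,v]$ into $U$, and condition (a) of \autoref{thm:int-curve-local} is arranged either this way or by noting that $\omega\equiv 0$ on $[r,v]$ forces the relevant increments to be trivial. I expect this bookkeeping of condition (a) to be the fiddliest, though not deep, point. The chart $(V,y)$ then contains $\phi(Y([u,v]))$.

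Since $Y$ is an integral curve and $[u,v]$ is compact with $Y([u,v])\subseteq U$, \eqref{eq:integral-curve} holds on $\Delta[u,v]$ in the chart $(U,x)$ with some constant. This is exactly hypothesis \eqref{eq:full-davie} of \autoref{thm:ch-var}, because $\frac{1}{\sigma(\tau)}$ summed over unordered forests $\tab$ equals $\frac12$ summed over ordered pairs $(\alpha,\beta)$. We apply \autoref{thm:ch-var} on $[u,v]$ with the smooth map $y\circ\phi\circ x^{-1}:x(U)\to\Rsf^{m'}$ and the curve $x\circ Y$. It yields an expansion of $y\circ\phi\circ Y$ with coefficients $g_{\ta}^i=\partial_j(y\circ\phi\circ x^{-1})^i f^j_{\ta}$, $g_{\tab}^i=\partial_j(\cdot)^i f^j_{\tab}+\partial^2_{jk}(\cdot)^i f^j_{\ta}f^k_{\tb}$, and $g^i_{\tc}=\partial_j(\cdot)^i f^j_{\tc}$, all evaluated at $Y(s)$.

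The key identification is that these coefficients are precisely the coordinates, in the induced chart $(\overline V,\overline y)$, of $R^2_n\phi(f(Y(s)))$. This is the explicit coordinate formula for $R^2_n\phi$ stated before \autoref{thm:r2-functorial}. By $\phi$-relatedness, $R^2_n\phi(f(Y(s)))=g(\phi(Y(s)))$, so the expansion is \eqref{eq:integral-curve} for $\phi\circ Y$ and $g$ in chart $(V,y)$ on $\Delta[u,v]$. Since each $r$ admits such $[u,v]$, $(V,y)$ and $C$, \autoref{thm:int-curve-local} shows that $\phi\circ Y$, which is continuous, is an integral curve of $g$ driven by $X$.
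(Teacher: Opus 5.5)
Your argument is the paper's own proof except at one step. The shared part is the choice of charts $(U,x)$ and $(V,y)$ with $x(U)$ convex and $\phi(U)\subseteq V$, then \autoref{thm:ch-var} applied to $y\circ\phi\circ x^{-1}$. The resulting coefficients are identified with the coordinates of $R^2_n\phi(f(Y(s)))=g(\phi(Y(s)))$, and the proof closes with \autoref{thm:int-curve-local}. Your remark matching $\frac{1}{\sigma(\tau)}$ with the factor $\frac12$ over ordered pairs is also correct.

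The step that is not yet an argument is condition (a) of \autoref{thm:int-curve-local}. ``Pick $v$ as large as allowed by continuity'' only keeps $Y([u,v])\subseteq U$. It does not exclude that $\omega(r,\cdot)$ is still zero at the moment $Y$ is about to leave $U$, in which case (a) fails. Your alternative, that $\omega\equiv 0$ makes the increments trivial, cannot stand in for (a), because (a) is a hypothesis of that lemma.

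What rules out the bad case is that $Y$ cannot move while $\omega$ vanishes. If $Y([r,v])\subseteq U$ and $\omega(r,v)=0$, then \eqref{eq:integral-curve} in the chart $(U,x)$ on $\Delta[r,v]$, together with \eqref{eq:rp_regularity}, gives $Y(v)=Y(r)$. The paper packages this as follows:
\begin{itemize}
\item Take a neighbourhood $\widetilde U$ of $Y(r)$ with $\overline{\widetilde U}\subseteq U$.
\item Set $u:=\sup\{s\in[S,r]:Y(s)\notin\widetilde U\}\vee S$ and $v:=\inf\{t\in[r,T]:Y(t)\notin\widetilde U\}\wedge T$. Then $Y([u,v])\subseteq\overline{\widetilde U}\subseteq U$.
\item If $v<T$, the exit set is closed and nonempty, so $Y(v)\notin\widetilde U$.
\item If moreover $\omega(r,v)=0$, the estimate forces $Y(v)=Y(r)\in\widetilde U$, a contradiction. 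Hence $v=T$ whenever $\omega(r,v)=0$, which is exactly (a).
\end{itemize}
With $[u,v]$ chosen this way, the rest of your proof goes through unchanged.
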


\begin{proof}
Let $r\in [S,T]$ and let $(U,x)$ be a chart around $Y(r)$ and $(V,y)$ be a chart around $\phi(Y(r))$ such that $x(U)$ is convex and $\phi(U)\subseteq V$. Let $\widetilde{U}$ be a neighborhood of $Y(r)$ whose closure is contained in $U$ and set
\begin{align*}
    u = \sup\left\{s \in [S, r] : Y(s) \notin \widetilde{U}\right\} \vee S,
    \\
    v = \inf\left\{t \in [r, T] : Y(t) \notin \widetilde{U}\right\} \wedge T.
\end{align*}
Then $u<v$, $r$ is contained in the relative interior of $[u,v]$, and $Y([u,v])\subseteq U$. Since $Y$ is an integral curve of $f$ driven by $X$, there exists a constant $C\in (0,\infty)$ such that the inequality
	\[
		\left|
			Y^i(t) - Y^i(s) 
			- \sum_{\tau\in\Fcal_n^2} \frac{1}{\sigma(\tau)} f^i_\tau(Y(s)) X^\tau(s,t)
		\right| 
		\leq C\omega(s,t)^\frac{3}{p},
	\]
where $(Y^i)$ is the representation of $Y: [u,v]\to M$ in chart $(U,x)$ and $(f^i_\tau)$ is the representation of $f$ in the induced chart $(\overline{U},\overline{x})$, holds for every $i\in\llbracket 1,m \rrbracket$ and $(s,t)\in\Delta [u,v]$. Consequently, if $\omega(r,v) = 0$, we have $Y(v) = Y(r) \in \widetilde{U}$, and thus $v = T$. As $g$ is $\phi$-related to $f$, by applying \autoref{thm:ch-var} to the smooth function $y\circ\phi\circ x^{-1}$ and the representation $(Y^i)$, we see that there exists a constant $\widetilde{C}\in (0,\infty)$ such that the inequality
	\[
		\left|
			\phi^i(Y(t)) - \phi^i(Y(s)) 
			- \sum_{\tau\in \Fcal_n^2} \frac{1}{\sigma(\tau)} g^i_\tau(Y(s))X^{\tau}(s,t)
		\right| 
		\leq \widetilde{C}\omega(s,t)^\frac{3}{p},
	\]
where $(\phi^i(Y))$  is the representation of $\phi\circ Y$ in chart $(V,y)$ and $(g^i_\tau)$ is the representation of $g$ in the induced chart $(\overline{V},\overline{y})$, holds for every $i\in \llbracket 1,m' \rrbracket$ and $(s,t)\in \Delta [u,v]$. Thus, by \autoref{thm:int-curve-local}, $\phi\circ Y$ is an integral curve of $g$ driven by $X$.
\end{proof}

\subsection{Admissible rough velocity fields}
\label{sec:admissible-rvf}

Not all rough velocity fields can be expected to have integral curves for all drivers. Consider the following

\begin{example}
Let $m,n\in\Nsf$ and let $x: [0,T]\to\Rsf^n$ be a smooth path. Let $(X^{\tau})$ be the canonical rough path lift with regularity $p=1$ and control $\omega(s,t):= t-s$ (for $(s,t)\in\Delta [0,T]$) as in \autoref{ex:natural-lift}. Suppose that $f: \Rsf^m\to R_n^2\Rsf^m$ is a rough velocity field with components $(f^i_\tau)$ in the standard chart. If $Y:[0,T]\to\Rsf^m$ is an integral curve of $f$ driven by $X$, then it follows that there is a constant $C\in (0,\infty)$ such that the estimate
	\[
		\left| 
			Y^i(t)- Y^i(s) - f^i_{\ta}(Y(s)) \dot{x}^\alpha(s) (t-s)
		\right|
		\leq C(t-s)^2
	\]
holds for every $i\in\llbracket 1,m \rrbracket$ and $(s,t) \in \Delta[0,T]$ such that $|t-s|$ is small.\footnote{Here, we denote by $\dot{x}$ the derivative of a function $x$.} This implies that, for $i\in \llbracket 1,m \rrbracket$, the equation
	\[
		\dot{Y}^i(s) = f^i_{\ta} (Y(s)) \dot{x}^\alpha(s)
	\]	
holds for every $s\in (0,T)$ so that $Y$ is the solution to an ODE with smooth coefficients and hence is itself smooth. But from~\eqref{eq:integral-curve} we see that the first and second partial derivative of 
	\[
		Y^i(t) - Y^i(s) - \sum_{\tau\in\Fcal_n^2}\frac{1}{\sigma(\tau)} f_\tau^i(Y(s)) X^\tau(s,t)
	\]
at the diagonal $\{s=t\}$ must be zero. By calculating the derivatives, we obtain that 
	\begin{align*} 
		0 & = \dot{Y}^i(s) - f_{\ta}^i(Y(s)) \dot{x}^\alpha(s),\\
		0 & = \frac{\partial f^i_{\ta}}{\partial y^j}(Y(s)) \dot{Y}^j(s) \dot{x}^\alpha(s) 
				- f^i_{\tab}(Y(s)) \dot{x}^\alpha(s)\dot{x}^\beta(s) 
				- f^i_{\tc}(Y(s)) \dot{x}^\alpha(s)\dot{x}^\beta(s)
	\end{align*}
holds for every $s\in (0,T)$. Substituting $\dot{Y}^i(s)$ from the first equation to the second one yields
	\[
		0 = \left(
				\frac{\partial f^i_{\ta}}{\partial y^j}(Y(s)) f^j_{\tb}(Y(s)) 
				- f^i_{\tab}(Y(s)) 
				- f^i_{\tc}(Y(s))
			\right)
			\dot{x}^\alpha(s) \dot{x}^\beta(s).
	\]
Therefore, in order for a rough velocity field to have integral curves for all smooth drivers through all points, the expression in the brackets above must equal to zero.
\end{example}

Thus we arrive at the notion of admissibility. 

\begin{definition}
\label{def:admissible-rvf}
Let $m,n\in \Nsf$ and let $M$ be an $m$-dimensional manifold. A rough velocity field $f:M\to R_n^2M$ is called \emph{admissible} if, for every $p\in M$, there exists a chart $(U,x)$ around $p$ such that the coordinate representation $(f_\tau^i)$ of $f$ in the induced chart $(\overline{U},\overline{x})$ on $R_n^2M$ satisfies 
	\begin{equation}
	\label{eq:admissible-rvf}
		\left(
			\frac{\partial f^i_{\tb}}{\partial x^j}f^j_{\ta}
		\right)(p) 
		= f_{\tab}^i(p) + f_{\tc}^i(p)
	\end{equation}
for every $i\in \llbracket 1,m \rrbracket$ and every $\alpha,\beta\in \llbracket 1,n \rrbracket$. 
\end{definition}

\begin{remark}
\label{rem:admissibility_is_intrinsic}
Condition~\eqref{eq:admissible-rvf} is an intrinsic condition: Let $(U,x)$ and $(V,y)$ be charts on manifold $M$ and let $(f^i_\tau)$ and $(g^i_\tau)$ be the coordinate representations of a rough velocity field $M\to R_n^2M$ in the induced charts $(\overline{U},\overline{x})$ and $(\overline{V},\overline{y})$ on $R_n^2$, respectively. Then on $U\cap V$, we have 
	\begin{align*}
  		g^{i}_{\tab} 
  			= \frac{\partial y^{i}}{\partial x^{j}}f^{j}_{\tab} 
 		 		+ \frac{\partial^{2} y^{i}}{\partial x^{j}\partial x^{k}}f^{j}_{\ta}f^{k}_{\tb}, 
 		   \quad
 		 g^{i}_{\tc}
 			 = \frac{\partial y^{i}}{\partial x^{j}}f^{j}_{\tc}, 
 		 	\quad\mbox{and}\quad
 		 \frac{\partial g^{i}_{\tb}}{\partial y^{j}}g^{j}_{\ta}
 		 	 = f^{k}_{\ta}
				\left(
	 				\frac{\partial^{2} y^{i}}{\partial x^{k}\partial x^{p}}f^{p}_{\tb} 
	 					+ \frac{\partial y^{i}}{\partial x^{p}}\frac{ \partial f^{p}_{\tb}}{\partial x^{k}}
   				\right),
	\end{align*}
so that
	\[
 		\left(
 			\frac{\partial g^{i}_{\tb}}{\partial y^{j}}g^{j}_{\ta}
 				- g^{i}_{\tab}
  				- g^{i}_{\tc}
  		\right)
 		= \frac{\partial y^{i}}{\partial x^{j}}
 			 \left(
 				\frac{\partial f^{j}_{\tb}}{\partial x^{k}}f^{k}_{\ta}
  				- f^{j}_{\tab}
  				- f^{j}_{\tc}
  			\right).
	\]
holds for every $i\in \llbracket 1,m \rrbracket$ and every $\alpha,\beta\in \llbracket 1,n \rrbracket$.
\end{remark}

\begin{remark}
The rough velocity fields constructed in \autoref{ex:connection-rvf} and \autoref{ex:canonical-rvf} are admissible.
\end{remark}

To conclude this section we note that if we rescale an admissible rough velocity field by a smooth function, the resulting rough velocity field is also admissible. 

\begin{remark}
Let $m,n\in\Nsf$, let $M$ be an $m$-dimensional manifold, and let $(U,x)$ be a chart on $M$. Let $f: M\to R_n^2M$ be a rough velocity field whose representation in the induced chart $(\overline{U},\overline{x})$ on $R_n^2M$ is $(f^i_\tau)$. Let also $\delta: M\to \Rsf$ be a smooth function and let $\widetilde{f}:M\to R_n^2M$ be the rescaling of $f$ by $\delta$ whose representation $(\widetilde{f}^i_{\tau})$ in the chart $(\overline{U},\overline{x})$ is given by~\eqref{eq:rescaling}. Then we have
	\begin{multline*}
      \frac{\partial \widetilde{f}^{i}_{\tb}}{\partial y^{j}}\widetilde{f}^{j}_{\ta}
      	- \widetilde{f}^{i}_{\tab}
      	- \widetilde{f}^{i}_{\tc}
       =
		\frac{\partial \delta}{\partial y^{j}}f^{i}_{\tb}\delta f^{j}_{\ta} + \delta \frac{\partial f^{i}_{\tb}}{\partial y^{j}}\delta f^{j}_{\ta}
      	- \delta^{2} f^{i}_{\tab}
    	- \frac{\partial \delta}{\partial y^{j}}f^{j}_{\ta}\delta f^{i}_{\tb} - \delta^{2}f^{i}_{\tc} 
      \\
       = 
      \delta^{2}
      \left(
      	\frac{\partial f^{i}_{\tb}}{\partial y^{j}}f^{j}_{\ta}
      	- f^{i}_{\tab}
     	 - f^{i}_{\tc}
      \right)
	\end{multline*}
on $U$ for every $i\in \llbracket 1,m \rrbracket$ and every $\alpha,\beta\in \llbracket 1,n \rrbracket$. . It follows that if $f$ is admissible, the right-hand side of the above expression is zero so that $\widetilde{f}$ is also admissible.
\end{remark}

In the rest of the article, we only consider admissible rough velocity fields.

\subsection{Existence in \texorpdfstring{$\Rsf^m$}{Rᵐ}}
\label{sec:R-existence}

In this section, we prove the following

\begin{theorem}
\label{thm:R-existence}
Let $[S,T]\subseteq\Rsf$ be an interval and let $m,n\in\Nsf$. Let $p\in [1,3)$, let $\omega$ be a control function on interval $[S,T]$, and let $X$ be an $n$-dimensional branched rough path on interval $[S,T]$ with regularity $p$ and control $\omega$. Let $f: \Rsf^m\to R_n^2\Rsf^m$ be a compactly supported admissible rough velocity field and $(f^i_\tau)$ its global coordinate representation. Then for each $Y_0\in\Rsf^m$, there exists a function $Y: [S,T]\to \Rsf^m$ satisfying $Y(S)=Y_0$ for which there exists a constant $C\in (0,\infty)$ such that the inequality 
	\begin{equation}
	\label{eq:integral-curve-Rm}
		\left|
			Y^i(t) - Y^i(s) 
			- \sum_{\tau\in\Fcal_n^2} \frac{1}{\sigma(\tau)} f_{\tau}^i(Y(s)) X^\tau(s,t)
		\right|
		\leq C \omega(s,t)^\frac{3}{p}
	\end{equation}
holds for every $i\in \llbracket 1,m\rrbracket$ and $(s,t)\in \Delta[S,T]$.
\end{theorem}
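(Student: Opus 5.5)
We will follow Davie's Euler-scheme strategy \cite{Dav08}, adapted to the full expansion with the $\tab$-terms. Define the one-step map
\[
	\Phi(s,t;y)^i := y^i + \sum_{\tau\in\Fcal_n^2}\frac{1}{\sigma(\tau)} f^i_\tau(y) X^\tau(s,t),
\]
and for a partition $\mathcal{D}=\{S=t_0<t_1<\dots<t_N=T\}$ let $Y^{\mathcal{D}}$ be the Euler approximation $Y^{\mathcal{D}}(t_{k+1}) = \Phi(t_k,t_{k+1};Y^{\mathcal{D}}(t_k))$, $Y^{\mathcal D}(t_0)=Y_0$. Since $f$ is compactly supported, all $f^i_\tau$ and their derivatives are bounded, so every constant below depends only on $\|f\|_3$, $\|X\|$ and $p$. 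The goal is a uniform estimate
\[
	\Big|Y^{\mathcal{D}}(t_l)-Y^{\mathcal{D}}(t_k) - \sum_{\tau}\tfrac{1}{\sigma(\tau)} f_\tau(Y^{\mathcal{D}}(t_k))X^\tau(t_k,t_l)\Big| \le C\omega(t_k,t_l)^{3/p}
\]
for all partition points $t_k\le t_l$ with $\omega(t_k,t_l)\le\varepsilon$, where $C$ and $\varepsilon$ do not depend on $\mathcal{D}$.

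The key algebraic step is the two-step defect. For $s<u<t$ we will expand $\Phi(u,t;\Phi(s,u;y)) - \Phi(s,t;y)$ by Taylor expansion of $f_\tau$ around $y$. The first-order terms cancel by additivity \eqref{eq:rp-additive}. For the second-order terms, we collect the cross term $\frac{\partial f_{\tb}}{\partial y^j}f^j_{\ta}(y)X^{\ta}(s,u)X^{\tb}(u,t)$ and compare it with the increments of $X^{\tab}$ and $X^{\tc}$. Using \eqref{eq:rp-multiplicative}, we have $X^{\tab}(s,t)-X^{\tab}(s,u)-X^{\tab}(u,t) = X^{\ta}(s,u)X^{\tb}(u,t)+X^{\tb}(s,u)X^{\ta}(u,t)$, and Chen's relation \eqref{eq:rp-chen} gives the $\tc$ defect $X^{\ta}(s,u)X^{\tb}(u,t)$. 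Summing over unordered $\{\alpha,\beta\}$ with the symmetry factors, the missing second-order piece is exactly
\[
	\big(f_{\tab}+f_{\tc}\big)(y)X^{\ta}(s,u)X^{\tb}(u,t),
\]
which by admissibility \eqref{eq:admissible-rvf} equals the Taylor cross term. What remains is $O(\omega(s,t)^{3/p})$, uniformly in $y$. We also need a Lipschitz-type estimate for the defect of the local expansion along two nearby starting points. Concretely, $|\Gamma(s,t;y)-\Gamma(s,t;z)|\lesssim |y-z|\,\omega(s,t)^{2/p}$, where $\Gamma(s,t;y):=\Phi(s,t;y)-y-f_{\ta}(y)X^{\ta}(s,t)$, and the first-order part is Lipschitz with factor $\omega^{1/p}$.

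Next comes the Davie induction on the number of partition intervals between $t_k$ and $t_l$. We split $[t_k,t_l]$ at a point $t_j$ with $\omega(t_k,t_{j-1}),\omega(t_{j},t_l)\le\omega(t_k,t_l)/2$, which is possible by superadditivity. We then combine the induction hypothesis on both halves, the single step $[t_{j-1},t_j]$, the two-step defect, and the Lipschitz estimates. This yields $\theta(\delta)\le 2\theta(\delta/2)+K\delta^{3/p}+(\text{small})\,\theta$-terms, where $\theta(\delta)$ denotes the worst constant. Since $3/p>1$, the bound closes for $\omega\le\varepsilon$ small, with $C$ independent of $\mathcal{D}$. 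This bookkeeping, in particular controlling how the error at the intermediate point propagates through $\Phi$ over $[t_j,t_l]$, is the step I expect to be the main obstacle. The extra $\tab$-terms mean that the expansion is not an integral, so Picard iteration is unavailable (cf. \autoref{rem:invariance_of_davie_solutions}). We will handle the propagation with the Lipschitz bounds above, which gives an additional factor $\lesssim\omega^{1/p}$.

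Finally, the uniform estimate gives equicontinuity: $|Y^{\mathcal{D}}(t)-Y^{\mathcal D}(s)|\lesssim\omega(s,t)^{1/p}$ for small $\omega$, and boundedness on all of $[S,T]$ by chaining finitely many small intervals. We extend $Y^{\mathcal D}$ piecewise, for instance by $\Phi(t_k,t;Y^{\mathcal D}(t_k))$ on $[t_k,t_{k+1}]$. Arzel\`a--Ascoli then gives a uniformly convergent subsequence along partitions with mesh tending to zero, with limit $Y$ and $Y(S)=Y_0$. Passing to the limit in the uniform estimate, using continuity of $f_\tau$ and of $X^\tau$ and approximating $s,t$ by partition points, shows \eqref{eq:integral-curve-Rm} for $\omega(s,t)\le\varepsilon$. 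The argument of \autoref{thm:integral-curve-delta} then extends the estimate to all of $\Delta[S,T]$.
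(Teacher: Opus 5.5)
Your proposal is correct and follows essentially the same route as the paper. It uses Davie-type Euler approximations, a non-additivity estimate in which additivity, multiplicativity, Chen's relation and admissibility cancel all second-order terms, an induction on the number of partition intervals with an exact single step in the middle (the paper's \autoref{thm:discrete-sewing}), and a compactness argument to pass to the limit. The differences are only technical. Because you Taylor-expand around the predicted point and propagate the remainder through the globally Lipschitz map $\Phi$, your bound is linear in the remainder, whereas the paper's \autoref{thm:dJ-estimate} has a quadratic term absorbed via the a priori bound of \autoref{thm:recurrence-bound}. Your median split works for arbitrary partitions, not only those satisfying $(P_\theta)$. Finally, you use Arzel\`a--Ascoli on interpolated schemes where the paper uses a diagonal argument on nested partitions.
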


As discussed in \autoref{rem:invariance_of_davie_solutions}, \autoref{thm:R-existence} cannot be proved by Picard's iteration of rough integrals with respect to the rough path $X$. Instead, we shall use Euler's approximations in the spirit of \cite{Dav08}.

Let $m,n\in \Nsf$ and let $[S,T]\subseteq\Rsf$ be an interval. Let also $p\in [1,3)$, let $\omega$ be a control function on interval $[S,T]$, and let $X$ be an $n$-dimensional branched rough path on interval $[S,T]$ with regularity $p$ and control $\omega$. Let $f: \Rsf^m\to R_n^2\Rsf^m$ be a compactly supported admissible rough velocity field and denote its coordinate  representation in the global chart by $(f_\tau^i)$ and its support by $\mathrm{supp}(f)$. Finally, let $Y_0\in \Rsf^m$. 

For a partition $\Pcal:= \{S=U_0<U_1<\ldots<U_N=T\}$ of interval $[S,T]$, we shall denote 
	\[
		X_{s,t}^\tau := X^\tau(U_s,U_t)
	\]
for $\tau\in \Fcal_n^2$ and $(s,t)\in \Delta\llbracket 0, N\rrbracket $ and
	\[
		\omega_{s,t}:= \omega(U_s,U_t)
	\]
for $(s,t)\in \Delta\llbracket 0, N\rrbracket $. We also define
	\begin{equation}
	\label{eq:euler-approx}
		Y_{t+1}^i = Y_t^i + \sum_{\tau\in\Fcal_n^2} \frac{1}{\sigma(\tau)} f_\tau^i(Y_t) X_{t,t+1}^\tau
	\end{equation}
for $i\in\llbracket 1, m\rrbracket$ and $t\in \llbracket 0,N-1\rrbracket$, and denote
	\begin{equation}
	\label{eq:euler-approx-taylor}
		Z^i_{s,t} 
			:= Y_t^i - Y^i_s 
				- \sum_{\tau\in\Fcal_n^2} \frac{1}{\sigma(\tau)} f_{\tau}^i(Y_s) X_{s,t}^\tau
	\end{equation}
for $i\in\llbracket 1, m\rrbracket$ and $(s,t)\in \Delta\llbracket 0,N\rrbracket$. 

We first prove that Euler's approximations \eqref{eq:euler-approx} stay in a ball near the initial condition. 

\begin{lemma}
\label{thm:recurrence-bound}
There exists $R\in (0,\infty)$ such that for every partition $\Pcal=\{S=U_0<U_1<\ldots<U_N=T\}$ (with $N\in\Nsf$) of the interval $[S,T]$, the estimate 
	\[
		|Y_t-Y_0| < R
	\]
holds for every $t\in \llbracket 0, N\rrbracket$. Constant $R$ depends only on $\|X\|$, $\omega(S,T)$, $p$, $\mathrm{diam}\,\mathrm{supp}(f)$, and $\|f\|_0$. 
\end{lemma}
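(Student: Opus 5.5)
The proof uses no analytic estimate, only the compact support of $f$ together with the explicit form of the Euler recursion~\eqref{eq:euler-approx}. Note first that, since $\mathrm{supp}(f)$ is the closure of the set where $f$ is nonzero, all components satisfy $f^i_\tau(y)=0$ for $y\notin\mathrm{supp}(f)$. Hence, if $Y_t\notin \mathrm{supp}(f)$ for some $t$, then~\eqref{eq:euler-approx} gives $Y_{t+1}=Y_t$. By induction, $Y_{t'}=Y_t$ for all $t'\geq t$. So every Euler trajectory has the following structure: there is an index $t^*\in\llbracket 0,N\rrbracket\cup\{-1\}$ such that $Y_t\in\mathrm{supp}(f)$ for all $t\leq t^*$, and $Y_t=Y_{t^*+1}$ for all $t> t^*$. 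Here $t^*=-1$ corresponds to $Y_0\notin\mathrm{supp}(f)$, in which case the trajectory is constant.

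Next, bound a single Euler step uniformly in the partition. Since $1/\sigma(\tau)\leq 1$, the regularity bounds~\eqref{eq:rp_regularity} and superadditivity of $\omega$ (so that $\omega_{t,t+1}\leq\omega(S,T)$) give
\[
|Y_{t+1}-Y_t|\leq \sum_{\tau\in\Fcal_n^2}\sup|f^i_\tau|\,|X^\tau_{t,t+1}|\leq \|f\|_0\,\|X\|\left(\omega(S,T)^{\frac1p}+\omega(S,T)^{\frac2p}\right)=:J .
\]
This $J$ depends only on the allowed quantities.

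Now combine the two observations. If $t^*=-1$, then $|Y_t-Y_0|=0$ for all $t$. Otherwise $Y_0\in\mathrm{supp}(f)$. For $t\leq t^*$ both $Y_t$ and $Y_0$ lie in $\mathrm{supp}(f)$, so $|Y_t-Y_0|\leq\mathrm{diam}\,\mathrm{supp}(f)$. For $t>t^*$ we have $Y_t=Y_{t^*+1}$, and a single step from $Y_{t^*}$ gives $|Y_t-Y_0|\leq |Y_{t^*}-Y_0|+J\leq \mathrm{diam}\,\mathrm{supp}(f)+J$. Thus
\[
R:=\mathrm{diam}\,\mathrm{supp}(f)+\|f\|_0\,\|X\|\left(\omega(S,T)^{\frac1p}+\omega(S,T)^{\frac2p}\right)+1
\]
works (the $+1$ gives the strict inequality). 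It depends only on the quantities listed in the statement and not on $N$, the partition, or $Y_0$.

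The only point needing care is the first step, that the iteration freezes once it leaves the support. This is exactly where compact support replaces the Lipschitz/Gronwall-type control one would otherwise need. A direct summation of step sizes would fail, because $\sum_t\omega_{t,t+1}^{1/p}$ is not uniformly bounded over partitions when $p>1$.
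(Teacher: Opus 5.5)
Your proof is correct and is essentially the paper's argument: the Euler scheme freezes once it leaves $\mathrm{supp}(f)$, a single step is bounded by $\|f\|_0\|X\|\bigl(\omega(S,T)^{\frac1p}+\omega(S,T)^{\frac2p}\bigr)$, and $R$ is taken strictly larger than this bound plus $\mathrm{diam}\,\mathrm{supp}(f)$. The only difference is presentation: the paper runs an induction on $t$ with a case split on whether $Y_t\in\mathrm{supp}(f)$, whereas you organize the same reasoning around the exit index $t^*$.
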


\begin{proof}
Let $R\in (0,\infty)$ satisfy 
	\begin{equation}
	\label{eq:R-constant}
		R > \|X\| \|f\|_0 
			\left(
				\omega(S,T)^\frac{1}{p} 
				+ \omega(S,T)^\frac{2}{p}
			\right) 
			+ \mathrm{diam}\, \mathrm{supp}(f)
	\end{equation}
and let $\{S=U_0<U_1<\ldots<U_N=T\}$ (with $N\in\Nsf$) be a partition of interval $[S,T]$.

If $Y_0\not\in \mathrm{supp}(f)$, then we have $Y_t=Y_0$ for all $t\in \llbracket0, N\rrbracket$ so that we have $|Y_t-Y_0| = 0 < R$ for every $t\in \llbracket 0, N\rrbracket$. Suppose therefore that $Y_0\in \mathrm{supp}(f)$. We prove that the estimate $|Y_t - Y_0| < R$ holds for every $t\in \llbracket 0, N\rrbracket$ by induction. Clearly, the claim holds for $t=0$. Assume that $|Y_t-Y_0| < R$ holds for some $t\in \llbracket 0, N-1\rrbracket$. If $Y_t\not\in \mathrm{supp}(f)$, then $Y_{t+1}=Y_t$ so that 
	\[
		|Y_{t+1}-Y_0| = |Y_t-Y_0| < R.
	\]
If $Y_t\in \mathrm{supp}(f)$, then 
	\begin{align*}
		|Y_{t+1}-Y_0| 
			  & \leq 
			 	\left|
			 		\sum_{\tau\in\Fcal_n^2} \frac{1}{\sigma(\tau)} f_\tau (Y_t) X_{t,t+1}^\tau
			 	\right| 
			 	+ |Y_t-Y_0| 
			 \\ 
			  & \leq \|X\| \|f\|_0 
			 	\left(
			 		\omega(S,T)^\frac{1}{p} 
			 		+ \omega(S,T)^\frac{2}{p}
			 	\right) 
			 	+ \mathrm{diam}\, \mathrm{supp}(f) 
			 < R
	\end{align*}
by using assumption~\eqref{eq:rp_regularity}. This proves the induction step and concludes the proof.
\end{proof}

We continue with two general lemmas that will allow us to obtain local control of the Taylor approximation \eqref{eq:euler-approx-taylor}. As preparation, we give the following technical lemma that is easily proved by using the integral form of the remainder in Taylor's formula; see, e.g., \cite[8.14.3]{Die69}.

\begin{lemma}
\label{thm:diff-on-square-est}
Let $f:\Rsf^m\to\Rsf$ be a smooth, compactly supported function and let $\varepsilon, \tilde{\varepsilon}, \delta, \zeta\in [0,\infty)$ and $a,b, \tilde{a},\tilde{b}\in\Rsf^m$ be such that 
	\[
		|b-a|\leq \varepsilon, 
			\quad 
		|\tilde{b}-\tilde{a}|\leq \tilde{\varepsilon}, 
			\quad 
		|\tilde{a}-a|\leq \delta, 
			\quad\mbox{and}\quad 
		|\tilde{b}-\tilde{a} - (b-a)| \leq \zeta.
	\]
Let also $T_0,T_1: \Rsf^m\times\Rsf^m\to\Rsf$ be defined by 
	\begin{align*}
		& T_0(x,y):= f(y)-f(x),\\
		& T_1(x,y):= f(y)-f(x)- \frac{\partial f}{\partial x^j}(x)(y^j-x^j)
	\end{align*}
for $x,y\in\Rsf^m$. Then
	\[
		\left|T_0(a,b)\right|  \lesssim \varepsilon 
		\quad\mbox{and}\quad
		|T_0(\tilde{a},\tilde{b})-T_0(a,b)| \lesssim \varepsilon(\delta + \zeta) + \zeta,
	\]
and
	\[
		 |T_1(a,b)| \lesssim \varepsilon^2
		 \quad \mbox{and}\quad 
		 |T_1(\tilde{a},\tilde{b})-T_1(a,b)| \lesssim \varepsilon^2(\delta + \zeta) + (\varepsilon + \tilde{\varepsilon})\zeta
	\]
where the constants are non-decreasing functions of $C^1$, $C^2$, $C^2$, and $C^3$ norm of $f$, respectively.
\end{lemma}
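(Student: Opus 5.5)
The plan is to prove all four estimates from integral forms of the Taylor remainder. Write $h:=b-a$ and $\tilde h:=\tilde b-\tilde a$, so $|h|\le\varepsilon$, $|\tilde h|\le\tilde\varepsilon$ and $|\tilde h-h|\le\zeta$. For $T_0$ we use
\[
T_0(x,x+h)=\int_0^1 \frac{\partial f}{\partial x^j}(x+\theta h)\,h^j\d{\theta}.
\]
This gives $|T_0(a,b)|\le \|f\|_1\varepsilon$ immediately. For the difference, we split
\[
T_0(\tilde a,\tilde b)-T_0(a,b)=\int_0^1\Big(\frac{\partial f}{\partial x^j}(\tilde a+\theta\tilde h)-\frac{\partial f}{\partial x^j}(a+\theta h)\Big)h^j\d{\theta}+\int_0^1\frac{\partial f}{\partial x^j}(\tilde a+\theta\tilde h)(\tilde h^j-h^j)\d{\theta}.
\]
The second term is bounded by $\|f\|_1\zeta$. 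In the first term the two evaluation points differ by at most $|\tilde a-a|+\theta|\tilde h-h|\le\delta+\zeta$. The mean value theorem applied to $\partial f$ therefore bounds the first term by $\|f\|_2(\delta+\zeta)\varepsilon$. Together these give $\varepsilon(\delta+\zeta)+\zeta$ with a constant controlled by the $C^2$ norm.

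For $T_1$ we use the second-order integral remainder
\[
T_1(x,x+h)=\int_0^1(1-\theta)\,\frac{\partial^2 f}{\partial x^j\partial x^k}(x+\theta h)\,h^jh^k\d{\theta}.
\]
This yields $|T_1(a,b)|\lesssim\|f\|_2\varepsilon^2$ at once. For the difference we telescope in three pieces:
\[
\partial^2 f(\tilde a+\theta\tilde h)\,\tilde h\tilde h-\partial^2 f(a+\theta h)\,hh
=\big[\partial^2 f(\tilde a+\theta\tilde h)-\partial^2 f(a+\theta h)\big]hh+\partial^2 f(\tilde a+\theta\tilde h)\,(\tilde h-h)\tilde h+\partial^2 f(\tilde a+\theta\tilde h)\,h(\tilde h-h).
\]
The first piece is at most $\|f\|_3(\delta+\zeta)\varepsilon^2$ by the mean value theorem for $\partial^2 f$. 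The second is at most $\|f\|_2\,\zeta\tilde\varepsilon$ and the third at most $\|f\|_2\,\zeta\varepsilon$. Integrating in $\theta$ gives $\varepsilon^2(\delta+\zeta)+(\varepsilon+\tilde\varepsilon)\zeta$ with a constant depending on the $C^3$ norm.

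All the constants are sums of sup norms of derivatives of $f$, times combinatorial factors in $m$ coming from the max-norm. These norms are finite because $f$ is smooth and compactly supported, so the constants are non-decreasing in the relevant norms. The only point needing care is that the mean value estimate along segments is legitimate, which holds because $\mathbb{R}^m$ is convex. The only subtle step is choosing the telescoping for $T_1$ so that the $\zeta$-terms carry a factor $\varepsilon+\tilde\varepsilon$ rather than $\varepsilon^2$; that is exactly what the split above achieves. No genuine obstacle is expected beyond this.
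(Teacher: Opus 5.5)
Your proposal is correct and takes the route the paper indicates: the integral form of the Taylor remainder. The paper gives no details and only cites Dieudonn\'e 8.14.3, so your splitting of $T_0$ and three-term telescoping of $T_1$ supply the argument, with the $(\varepsilon+\tilde{\varepsilon})\zeta$ term and the $C^1$, $C^2$, $C^2$, $C^3$ dependence of the constants coming out exactly as stated.
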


The next lemma is used to quantify the local non-additivity of the approximations \eqref{eq:euler-approx-taylor}.

\begin{lemma}
\label{thm:dJ-estimate}
Let $\Pcal$ be a non-empty subset of the interval $[S,T]$ and let $y: \Pcal\to\Rsf^m$ be a function. Let $K:\Delta \Pcal\to\Rsf^m$ be defined by 
	\[
		K^i(s,t) := y^i(t) - y^i(s) - \sum_{\tau\in\Fcal_n^2} \frac{1}{\sigma(\tau)} f^i_\tau(y(s)) X^\tau(s,t)
	\]
for $i\in\llbracket 1, m\rrbracket$ and $(s,t)\in\Delta \Pcal$. Then the estimate
		\[
			 |K(s,t) - K(s,u) - K(u,t)| 
			 	\lesssim 
			 |K(s,u)|^2\omega(u,t)^\frac{1}{p}
			 + |K(s,u)|\omega(u,t)^\frac{1}{p}
			 + \omega(s,t)^\frac{3}{p}
		\]
holds for every $(s,u,t)\in \Pcal^3$ such that $s\leq u\leq t$ and such that $\omega(s,t)\leq 1$. The constant in the above estimate depends only on $m$, $\|f\|_2$, and $\|X\|$.
\end{lemma}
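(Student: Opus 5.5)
We compute $K(s,t)-K(s,u)-K(u,t)$ explicitly. The $y$-increments cancel telescopically, and so does the difference in the sums. For $\ta$, additivity \eqref{eq:rp-additive} gives the term
\[
-\bigl(f^i_{\ta}(y(s))-f^i_{\ta}(y(u))\bigr)X^{\ta}(u,t).
\]
For $\tab$, multiplicativity \eqref{eq:rp-multiplicative} expands $X^{\tab}(s,t)$ into $X^{\tab}(s,u)+X^{\tab}(u,t)+X^{\ta}(s,u)X^{\tb}(u,t)+X^{\tb}(s,u)X^{\ta}(u,t)$. For $\tc$, Chen's relation \eqref{eq:rp-chen} adds $X^{\ta}(s,u)X^{\tb}(u,t)$. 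Collecting terms, we obtain
\begin{align*}
K^i(s,t)-K^i(s,u)-K^i(u,t) ={}& \bigl(f^i_{\tb}(y(u))-f^i_{\tb}(y(s))\bigr)X^{\tb}(u,t)
- \bigl(f^i_{\tab}(y(s)) + f^i_{\tc}(y(s))\bigr)X^{\ta}(s,u)X^{\tb}(u,t)\\
&+ \sum_{\tau\neq\ta}\frac{1}{\sigma(\tau)}\bigl(f^i_\tau(y(u))-f^i_\tau(y(s))\bigr)X^\tau(u,t).
\end{align*}
Here the symmetry factors are handled as follows. Summing over unordered pairs with weight $1/\sigma$ reproduces $\frac12\sum_{\alpha,\beta}f_{\tab}X^{\tab}$ over ordered pairs, so the cross terms combine into $\sum_{\alpha,\beta}f^i_{\tab}X^{\ta}(s,u)X^{\tb}(u,t)$.

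Next, we write $\Delta := y(u)-y(s)$, so that
\[
\Delta = K(s,u) + \sum_\tau \frac{1}{\sigma(\tau)} f_\tau(y(s))X^\tau(s,u),
\]
and hence $|\Delta|\lesssim |K(s,u)| + \omega(s,u)^{1/p}$ when $\omega(s,t)\le 1$. We then handle the three groups of terms in turn.

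The last sum involves only $X^\tau(u,t)$ of second order, which is $\lesssim\omega^{2/p}$. Using $|f_\tau(y(u))-f_\tau(y(s))|\lesssim \|f\|_1|\Delta|$ (the $T_0$ bound of \autoref{thm:diff-on-square-est}), this sum is bounded by $(|K(s,u)|+\omega^{1/p})\omega^{2/p}$. This in turn is $\lesssim |K(s,u)|\omega(u,t)^{1/p}+\omega(s,t)^{3/p}$, by \eqref{eq:control} and the normalization $\omega\le1$.

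For the first-order group, we Taylor expand using $T_1$:
\[
f^i_{\tb}(y(u))-f^i_{\tb}(y(s)) = \frac{\partial f^i_{\tb}}{\partial x^j}(y(s))\,\Delta^j + O\bigl(\|f\|_2|\Delta|^2\bigr).
\]
Substituting the expansion of $\Delta$, the linear part splits three ways. The piece from $K(s,u)$ gives $\lesssim |K(s,u)|\omega(u,t)^{1/p}$. The piece from $\ta$ gives $\frac{\partial f_{\tb}}{\partial x^j}f^j_{\ta}X^{\ta}(s,u)X^{\tb}(u,t)$. The pieces from second-order $\tau$ give $\omega^{3/p}$. The quadratic remainder gives $(|K(s,u)|^2+\omega^{2/p})\omega(u,t)^{1/p}$.

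The key step is that, by admissibility \eqref{eq:admissible-rvf} at $y(s)$ (valid pointwise, in the global chart on $\Rsf^m$, and intrinsic by \autoref{rem:admissibility_is_intrinsic}), the $\ta$-term exactly cancels the middle term $-(f_{\tab}+f_{\tc})X^{\ta}(s,u)X^{\tb}(u,t)$. Without this cancellation we would be left with an $\omega^{2/p}$ error, and admissibility is precisely what prevents it.

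The main bookkeeping obstacle is the symmetry-factor and index matching in the $\tab$ cross terms. We must verify that the coefficient of $X^{\ta}(s,u)X^{\tb}(u,t)$ coming from $\tab$ is exactly $f_{\tab}$, with no factor $\frac12$ left over. This holds because $f_{\tab}$ is symmetric in $\alpha,\beta$ (it is indexed by an unordered forest). Constants throughout depend only on $m$, $\|f\|_2$ and $\|X\|$, which gives the stated estimate.
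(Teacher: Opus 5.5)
Your proposal is correct and follows essentially the same route as the paper. You expand $K(s,t)-K(s,u)-K(u,t)$ via \eqref{eq:rp-additive}--\eqref{eq:rp-chen}, Taylor-expand $f_{\tb}$ at $y(s)$, and let admissibility cancel the $f_{\ta}X^{\ta}(s,u)$ part against the $X^{\ta}(s,u)X^{\tb}(u,t)$ cross terms. You then close with the $T_0$/$T_1$ bounds of \autoref{thm:diff-on-square-est} and $|y(u)-y(s)|\lesssim |K(s,u)|+\omega(s,u)^{1/p}$. The only cosmetic difference is bookkeeping: the paper writes the linear Taylor term as $\frac{\partial f_{\tb}}{\partial y^j}(y(s))J^j(s,u)$, with $J$ the first-order remainder bounded afterwards, whereas you split $y(u)-y(s)$ directly into its $K$, first-order and second-order pieces.
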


\begin{proof}
Let $i\in\llbracket 1, m\rrbracket$ and let $(s,u,t)\in \Pcal^3$ satisfy $s\leq u\leq t$ and $\omega(s,t)\leq 1$. By using the algebraic identities \eqref{eq:rp-additive}---\eqref{eq:rp-chen} together with the admissibility of the rough velocity field $f$, we obtain the decomposition
	\begin{equation}
	\label{eq:K-split}
		K^i(s,t) - K^i(s,u) - K^i(u,t) = U^i_{\tb}(s,u) X^{\tb}(u,t) + \frac{1}{2}V^i_{\tab}(s,u)X^{\tab}(u,t) + W^i_{\tc}(s,u)X^{\tc}(u,t)
	\end{equation}
where we define
	\[
		U^i_{\tb}(s,u) := U^i_{1,\tb}(s,u) + U^i_{2,\tb}(s,u)
	\]
with
	\begin{align*}
		U^i_{1,\tb}(s,u) & 
			:= f^i_{\tb}(y(u)) - f^i_{\tb}(y(s)) - \frac{\partial f^i_{\tb}}{\partial y^j} (y(s)) (y^j(u) - y^j(s)),\\
		U^i_{2,\tb}(s,u) & 
			:= \frac{\partial f^i_{\tb}}{\partial y^j}(y(s))
				\left(
					y^j(u) - y^j(s) - f^j_{\ta}(y(s)) X^{\ta}(s,u)
				\right)
	\end{align*}
and 
	\begin{align*}
		V^i_{\tab}(s,u) & := f^i_{\tab}(y(u)) - f^i_{\tab}(y(s)),\\
		W^i_{\tc}(s,u) & := f^i_{\tc}(y(u)) - f^i_{\tc}(y(s))
	\end{align*}
for $\alpha,\beta\in\llbracket 1,n\rrbracket$. By using \autoref{thm:diff-on-square-est} together with regularity estimates \eqref{eq:rp_regularity}, we obtain
	\begin{equation}
	\label{eq:K-est-1-pf}
			 |K^i(s,t) - K^i(s,u) - K^i(u,t)| 
			 \lesssim \left(
						|I(s,u)|^2 + |J(s,u)|
					  \right)
					  \omega(u,t)^\frac{1}{p}
					  + 
					  |I(s,u)|\omega(u,t)^\frac{2}{p}
	\end{equation}
where we define
	\begin{align*}
		J^i(s,u) & := y^i(u) - y^i(s) - f^i_{\ta}(y(s))X^{\ta}(s,u),\\
		I^i(s,u) & := y^i(u) - y^i(s).
	\end{align*}
By the triangle inequality, regularity estimates \eqref{eq:rp_regularity}, and the control estimate \eqref{eq:control}, we obtain
	\begin{align*}
		|J^i(s,u)| & \lesssim |K^i(s,u)| + \omega(s,u)^\frac{2}{p},\\
		|I^i(s,u)| & \lesssim |J^i(s,u)| + \omega(s,u)^\frac{1}{p} \lesssim |K^i(s,u)| + \omega(s,u)^\frac{2}{p} + \omega(s,u)^\frac{1}{p} \leq |K^i(s,u)| + \omega(s,u)^\frac{1}{p}
	\end{align*}
which, when inserted into \eqref{eq:K-est-1-pf}, yield the estimate
	\begin{align*}
	\label{eq:K-est-2-pf}
			|K^i(s,t) - K^i(s,u) - K^i(u,t)| 
			& \lesssim |K(s,u)|^2\omega(u,t)^\frac{1}{p}\\
			& \quad 	+ |K(s,u)| (\omega(u,t)^\frac{1}{p} + \omega(u,t)^\frac{2}{p})\\
			& \quad 	+ \omega(s,u)^\frac{2}{p} \omega(u,t)^\frac{1}{p}
						+ \omega(s,u)^\frac{1}{p} \omega(u,t)^\frac{2}{p}.
		\end{align*}
The claim follows by appealing to the control estimate \eqref{eq:control} again.
\end{proof}

\begin{lemma}[Discrete local sewing]
\label{thm:discrete-sewing}
Let $\theta\in (0,1)$ and let $\Pcal=\{S = U_0<U_1<\ldots<U_N = T\}$ (with $N\in\Nsf$, $N\geq 2$) be a partition of the interval $[S,T]$ satisfying the following condition:
	\begin{enumerate}[label=($P_\theta$)]
	\item\label{ass:sewing-p}
		Whenever $s,t\in\llbracket 0,N\rrbracket$ satisfy $\llbracket s+1,t-1\rrbracket\neq \emptyset$, there exists $u\in\llbracket s+1,t-1\rrbracket$ such that 
            \[
                \omega_{s,u}\leq \theta \omega_{s,t}.
            \]
	\end{enumerate}
Let $K:\Delta\Pcal\to\Rsf^m$ be a function and denote 
	\[
		K_{s,t}:=K(U_s,U_t), \quad (s,t)\in\Delta\llbracket 0,N\rrbracket.
	\]
Assume that the following condition is satisfied:
	\begin{enumerate}[label=($K$)]
	\item\label{ass:sewing-K}
		It holds that $K_{s,s}=0$ for all $s\in\llbracket 0,N\rrbracket$ and $K_{s,s+1}=0$ for all $s\in \llbracket 0,N-1\rrbracket$. Additionally, there exist $C,M\in (0,\infty)$ such that the estimate
	\[
		| K_{s,t} - K_{s,u} - K_{u,t}| \leq C |K_{s,u}|\,\omega_{u,t}^\frac{1}{p} + M\omega_{s,t}^\frac{3}{p}
	\]
holds for every $(s,u,t)\in\llbracket 0,N\rrbracket^3$ satisfying $s\leq u\leq t$ and $\omega_{s,t}\leq 1$.
	\end{enumerate}
Then there exist $\delta \in (0,1)$ and $L\in (0,\infty)$ such that the estimate
	\[
		|K_{s,t}| \leq L M \omega_{s,t}^\frac{3}{p}
	\]
holds for every $s,t\in\Delta\llbracket 0,N\rrbracket$ satisfying $\omega_{s,t}<\delta$. Constant $\delta$ depends only on $p$, $\theta$, and $C$ while constant $L$ depends only on $\theta$ and $p$. 
\end{lemma}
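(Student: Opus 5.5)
I will prove the estimate by strong induction on the gap $t-s$, in the spirit of Davie's argument. Fix $\delta\in(0,1)$ and $L\geq 1$, to be chosen later depending only on $p,\theta,C$ (for $\delta$) and $p,\theta$ (for $L$). The inductive claim is: for every $(s,t)\in\Delta\llbracket 0,N\rrbracket$ with $\omega_{s,t}<\delta$ we have $|K_{s,t}|\leq LM\omega_{s,t}^{3/p}$. The base cases $t-s\in\{0,1\}$ are immediate from assumption \ref{ass:sewing-K}, since $K_{s,s}=K_{s,s+1}=0$.

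For the induction step, let $t-s\geq 2$ with $\omega_{s,t}<\delta$. I would not split at a single point. Instead I would use \ref{ass:sewing-p} to pick $u\in\llbracket s+1,t-1\rrbracket$ with $\omega_{s,u}\leq\theta\omega_{s,t}$. This controls the left piece, but the right piece $[u,t]$ may still carry almost all of $\omega_{s,t}$, so a second point is needed. Let $v$ be the largest index in $\llbracket u,t-1\rrbracket$ with $\omega_{v,t}\geq$ something. More simply, let $v\geq u$ be the largest index with $\omega_{s,v}\leq \theta\omega_{s,t}$; then $v\leq t-1$. I then write
\[
K_{s,t}=K_{s,v}+K_{v,v+1}+K_{v+1,t}+\text{(two defects)} .
\]
Here $K_{v,v+1}=0$, and both $\omega_{s,v}\leq\theta\omega_{s,t}$ and $\omega_{v+1,t}\leq \omega_{s,t}-\omega_{s,v+1}<(1-\theta)\omega_{s,t}$ by superadditivity and maximality of $v$. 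Applying \ref{ass:sewing-K} twice (first to $(s,v,v+1)$, where the defect is $\leq C|K_{s,v}|\omega^{1/p}+M\omega^{3/p}$, then to $(s,v+1,t)$) and the induction hypothesis to $K_{s,v}$ and $K_{v+1,t}$ (shorter gaps, smaller control), I obtain
\[
|K_{s,t}|\leq \bigl(1+C\omega_{s,t}^{1/p}\bigr)^2 LM\bigl(\theta^{3/p}+(1-\theta)^{3/p}\bigr)\omega_{s,t}^{3/p}+c\,(1+C\delta^{1/p})M\omega_{s,t}^{3/p}
\]
for an absolute $c$. The problematic $|K_{s,u}|\,\omega^{1/p}$ cross terms get absorbed into the factor $(1+C\delta^{1/p})$. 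The defect for $(s,v+1,t)$ involves $|K_{s,v+1}|$, which I bound by the first split using the induction hypothesis.

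The remaining step is to close the induction. Since $3/p>1$, we have $\kappa:=\theta^{3/p}+(1-\theta)^{3/p}<1$, and this quantity depends only on $\theta,p$. I choose $\delta$ so small that $(1+C\delta^{1/p})^2\kappa\leq (1+\kappa)/2$; this makes $\delta$ depend only on $p,\theta,C$. I then choose $L$ so that $(1+\kappa)L/2+2c\leq L$, i.e.\ $L=4c/(1-\kappa)$, which depends only on $\theta,p$.

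I expect the main obstacle to be the bookkeeping of the nested defect at $(s,v+1,t)$. Its bound involves $|K_{s,v+1}|$, not $|K_{s,v}|$, and one must check that the multiplicative factor stays $(1+C\delta^{1/p})^{O(1)}$ and not something growing with $L$. If the two-step splitting becomes messy, my fallback is the one-point split at $u$ from \ref{ass:sewing-p}, treating the right piece by a second application of \ref{ass:sewing-p} inside $[u,t]$. Either way, the key facts are the same: $\theta^{3/p}+(1-\theta)^{3/p}<1$, and the $C|K|\omega^{1/p}$ term is a small perturbation when $\delta$ is small.
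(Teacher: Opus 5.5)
Your proposal is correct and is essentially the paper's argument: induction on the index gap $t-s$, splitting at the largest index $v$ with $\omega_{s,v}\le\theta\omega_{s,t}$ and at $v+1$, using $K_{v,v+1}=0$ and superadditivity to get $\omega_{v+1,t}<(1-\theta)\omega_{s,t}$, and closing via $\theta^{3/p}+(1-\theta)^{3/p}<1$. The only difference is the bracketing. The paper applies $(K)$ to the triples $(s,v,t)$ and $(v,v+1,t)$, so the second defect's $C|K_{v,v+1}|$ term vanishes outright, whereas your triples $(s,v,v+1)$ and $(s,v+1,t)$ cost a harmless factor $(1+C\delta^{1/p})^2$; you should impose $C\delta^{1/p}\le 1$ explicitly to justify $c(1+C\delta^{1/p})\le 2c$, and take $v\le t-1$ by definition so that the degenerate case $\omega_{s,t}=0$ is covered.
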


\begin{proof}
Since $3/p>1$, we have that $1- \theta^\frac{3}{p} - (1-\theta)^\frac{3}{p}>0$ and therefore, we can choose $\delta\in (0,1)$ and $L\in (0,\infty)$ that satisfy the following inequalities:
	\begin{equation}
	\label{eq:delta-L}
		2C\delta^\frac{1}{p} \leq \frac{1}{2}\left(1-\theta^\frac{3}{p}-(1-\theta)^\frac{3}{p}\right)
		\quad\mbox{and}\quad 
		L\geq  \frac{4}{1-\theta^\frac{3}{p} - (1-\theta)^\frac{3}{p}}.
	\end{equation}
We proceed by induction. Note first that $|J_{s,s}|=0$ for every $s\in \llbracket 0, N\rrbracket$ and $|J_{s,s+1}|=0$ for every $s\in \llbracket 0,N-1\rrbracket$ by assumption \ref{ass:sewing-K}. Let $v\in \llbracket 2,N\rrbracket$ and suppose that
	\[
		|K_{s,t}| \leq LM\omega_{s,t}^\frac{3}{p}
	\]
holds for every $s,t\in \llbracket 0, N-v\rrbracket$ satisfying $s\leq t<s+v$ and $\omega_{s,t}<\delta$. Let $s\in \llbracket 0,N-v\rrbracket$ and let $t=s+v$ and assume that $\omega_{s,t}<\delta$. Since $\Pcal$ satisfies \ref{ass:sewing-p}, there exists $u\in \llbracket s+1,t-1\rrbracket$ such that 
	\begin{equation}
	\label{eq:theta-1}
		\omega_{s,u}\leq \theta \omega_{s,t}
	\end{equation}
and we choose $u$ to be the largest such index. Then we have that $\omega_{s,u+1}>\theta\omega_{s,t}$ and, consequently, 
	\begin{equation}
	\label{eq:theta-2}
		\omega_{u+1,t}\leq \omega_{s,t} - \omega_{s,u+1} < (1-\theta)\omega_{s,t}
	\end{equation}
by superadditivity of the control function. Then we obtain
	\begin{align*}
		|K_{s,t}| & \leq |K_{s,t} - K_{s,u} - K_{u,t}| + |K_{s,u}| + |K_{u,t}-K_{u,u+1} - K_{u+1,t}| + |K_{u, u+1}| + |K_{u+1,t}| \\
			& \leq C|K_{s,u}|\omega_{u,t}^\frac{1}{p} + M\omega_{s,t}^\frac{3}{p} + |K_{s,u}| + C|K_{u,t}| \omega_{u+1,t}^\frac{1}{p} + M\omega_{u,t}^\frac{3}{p} + |K_{u+1,t}|\\
			& \leq C LM\omega_{s,u}^\frac{3}{p}\omega_{u,t}^\frac{1}{p} + M\omega_{s,t}^\frac{3}{p} + LM\omega_{s,u}^\frac{3}{p} + CLM\omega_{u,t}^\frac{3}{p} + M\omega_{u,t}^\frac{3}{p} + LM\omega_{u+1,t}^\frac{3}{p} \\
			& \leq (2 + (\theta^\frac{3}{p} + (1-\theta)^\frac{3}{p} + 2C\delta^\frac{1}{p})L) M\, \omega_{s,t}^\frac{3}{p}
	\end{align*}
by using assumption \ref{ass:sewing-K}, the induction hypothesis, and the estimates  \eqref{eq:theta-1} and \eqref{eq:theta-2} successively. As $\delta$ and $L$ were chosen so that the inequalities in \eqref{eq:delta-L} are satisfied, we have that
	\[
		|K_{s,t}| \leq L M\omega_{s,t}^\frac{3}{p}
	\]
holds as desired.
\end{proof}

\begin{remark}
Note that if there exists $\theta\in (0,1)$ such that a partition 
	\[
		\Pcal=\{S = U_0<U_1<\ldots<U_N = T\}\quad (\mbox{with } N\in\Nsf, N\geq 2)
	\]
satisfies
	\begin{equation}
	\label{ass:sewing-p2}
		\max_{s\in \llbracket 0, N-1\rrbracket} \omega_{s,s+1} \leq \theta \min_{\substack{s,t\in \llbracket 0,N\rrbracket \\ t> s+1}} \omega_{s,t},
	\end{equation}
then $\Pcal$ also satisfies \ref{ass:sewing-p}. If, for example, the set $\Pcal$ is uniform in $\omega$, i.e.\ there exists $\varepsilon\in (0,\infty)$ satisfying $\omega_{s,s+1} = \varepsilon$ for every $s\in \llbracket 0,N-1 \rrbracket$, then $\Pcal$ satisfies the above condition \eqref{ass:sewing-p2}, and therefore also condition \ref{ass:sewing-p}, with any $\theta\in [1/2,1)$. 
Let us also note that there exists a sequence of partitions $\{\Pcal^{(a)}\}_{a\in\Nsf}$, denoted by 
	\[
		\Pcal^{(a)}:=\{S=U_0^a<U_1^a<\ldots<U_{N_a}^a=T\} \quad (\mbox{with } N_a\in \Nsf, N_a\geq 2),
	\]
such that the following holds:
	\begin{enumerate}[label=(\alph*)]
    \itemsep0em
	\item There exists $\theta\in (0,1)$ such that for every $a\in\Nsf$, partition $\Pcal^{(a)}$ satisfies \ref{ass:sewing-p}.
	\item For every $a\in\Nsf$, partition $P^{(a+1)}$ is a refinement of $\Pcal^{(a)}$. 
	\item There is the convergence $|\Pcal^{(a)}|:=\max_{i\in\llbracket 0,N_a-1\rrbracket } |U_{i+1}^a-U^a_i|\to 0$ as $a\to\infty$.
	\end{enumerate}
Indeed, consider the sequence $\{D_n\}_{n\in\Nsf}$ of partitions $D_n:=\{S=U_0^n<U_1^n<\ldots <U_{2^n}^n=T\}$ such that $\omega_{s,s+1}=2^{-n}\omega(S,T)$ for every $s\in \llbracket 0, 2^{n}-1\rrbracket$ holds. Then for every $\theta\in [1/2,1)$ we have that for every $n\in\Nsf$, $D_n$ satisfies \ref{ass:sewing-p}, $D_{n}\subseteq D_{n+1}$, and $|D_n|\to 0$ as $n\to\infty$ by continuity of $\omega$. 
\end{remark}

We now apply the two results above to obtain a local bound on the approximations \eqref{eq:euler-approx-taylor}.

\begin{lemma}
\label{thm:J-estimate}
Let $\theta\in (0,1)$. There exist $\delta\in (0,1)$ and $L\in (0,\infty)$ such that for every partition $\Pcal=\{S=U_0<U_1<\ldots<U_N=T\}$ (with $N\in\Nsf$, $N\geq 2$) satisfying \ref{ass:sewing-p} the estimate
	\[
		|Z_{s,t}|\leq L\omega_{s,t}^\frac{3}{p}
	\]
holds for every $(s,t)\in \Delta\llbracket 0,N \rrbracket$ satisfying $\omega_{s,t}<\delta$. Both constants $\delta$ and $L$ depend only on $\theta$, $m$, $\|f\|_2$, $\mathrm{supp}(f)$, $p$, $\omega(S,T)$, and $\|X\|$. 
\end{lemma}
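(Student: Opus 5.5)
The plan is to feed the Euler scheme into \autoref{thm:dJ-estimate} and then into \autoref{thm:discrete-sewing}. The one obstruction is the quadratic term $|K|^2$ produced by \autoref{thm:dJ-estimate}, which \autoref{thm:discrete-sewing} does not accept; I will remove it with an a priori bound on $|Z_{s,u}|$.

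First I fix the objects. Take a partition $\Pcal$ satisfying \ref{ass:sewing-p} and let $y:\Pcal\to\Rsf^m$ be $y(U_t):=Y_t$, the Euler approximations \eqref{eq:euler-approx}. Then the function $K$ of \autoref{thm:dJ-estimate} satisfies $K(U_s,U_t)=Z_{s,t}$. Clearly $Z_{s,s}=0$, and $Z_{s,s+1}=0$ by the definition of the scheme. So the first part of \ref{ass:sewing-K} holds. For $s\le u\le t$ with $\omega_{s,t}\le 1$, \autoref{thm:dJ-estimate} gives
	\[
		|Z_{s,t}-Z_{s,u}-Z_{u,t}| \le C_1\left(|Z_{s,u}|^2\omega_{u,t}^\frac{1}{p}+|Z_{s,u}|\omega_{u,t}^\frac{1}{p}+\omega_{s,t}^\frac{3}{p}\right),
	\]
where $C_1$ depends only on $m$, $\|f\|_2$ and $\|X\|$.

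Next, the a priori bound. By \autoref{thm:recurrence-bound}, every $Y_t$ lies in the ball of radius $R$ around $Y_0$. Hence $|Z_{s,u}|\le 2R+\|f\|_0\|X\|(\omega(S,T)^{1/p}+\omega(S,T)^{2/p})=:B$, uniformly in the partition. Therefore $|Z_{s,u}|^2\le B|Z_{s,u}|$, and the estimate above becomes the hypothesis \ref{ass:sewing-K} with $C:=C_1(1+B)$ and $M:=C_1$. Applying \autoref{thm:discrete-sewing} then yields $\delta\in(0,1)$ and $L'$ with $|Z_{s,t}|\le L'C_1\omega_{s,t}^{3/p}$ whenever $\omega_{s,t}<\delta$. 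Here $\delta$ depends on $p,\theta,C$, and $L'$ depends on $\theta,p$. Setting $L:=L'C_1$ gives the claim.

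It remains to track the dependencies. $C_1$ depends on $m,\|f\|_2,\|X\|$. $B$ depends on $R$ (hence on $\|X\|,\omega(S,T),p,\mathrm{supp}(f),\|f\|_0$) and on the same quantities. So $\delta$ and $L$ depend only on the listed data and, crucially, not on $\Pcal$. The main obstacle is exactly the quadratic term: a crude global bound is enough here, because the discrete sewing lemma tolerates a large constant $C$ in front of the linear term at the cost of a smaller $\delta$. This is also why the conclusion holds only for $\omega_{s,t}<\delta$, not globally.

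One check is needed in the inductive step of \autoref{thm:discrete-sewing}. That proof applies the three-point estimate to the triple $(u,u+1,t)$, where the linear term appears as $|K_{u,u+1}|$, which vanishes, so no issue arises. The hypothesis is stated for every triple with $\omega_{s,t}\le1$, which we have verified, so the application goes through.
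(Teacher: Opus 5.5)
Your proposal is correct and is essentially the paper's proof. You verify \ref{ass:sewing-K} via \autoref{thm:dJ-estimate}, absorb the quadratic term $|Z_{s,u}|^2$ into the linear one using the partition-independent bound from \autoref{thm:recurrence-bound}, and then apply \autoref{thm:discrete-sewing} with the dependencies tracked as stated (the paper uses $|Z_{s,u}|<3R$, which your explicit $B$ satisfies by \eqref{eq:R-constant}).
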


\begin{proof}
Let $\Pcal=\{S=U_0<U_1<\ldots<U_N=T\}$ (with $N\in\Nsf$, $N\geq 2$) be a partition of interval $[S,T]$ that satisfies \ref{ass:sewing-p}. We have that $Z_{s,s}=0$ for every $s\in \llbracket 0,N\rrbracket$ by definition and we also have $Z_{s,s+1}=0$ for every $s\in \llbracket 0,N-1\rrbracket$ by definition of $Y$. Moreover, by \autoref{thm:dJ-estimate}, we have for every $(s,u,t)\in \llbracket 0, N\rrbracket^3$ satisfying $s\leq u\leq t$ and $\omega_{s,t}\leq 1$ the estimate 
	\[ 
		|Z_{s,t}-Z_{s,u}-Z_{u,t}| \lesssim |Z_{s,u}|^2 \omega_{u,t}^\frac{1}{p} + |Z_{s,u}|\omega_{u,t}^\frac{1}{p} + \omega_{s,t}^\frac{3}{p},
	\]
but, by \autoref{thm:recurrence-bound}, we have that there exists $R\in (0,\infty)$ such that
	\[ 
		|Z_{s,u}| \leq |Y_u-Y_0| + |Y_s-Y_0| + \left|\sum_{\tau\in\Fcal_n^2} \frac{1}{\sigma(\tau)} f_{\tau}(Y_s)X_{s,u}^\tau\right| < 3R
	\]
holds (see \eqref{eq:R-constant}), so that in fact, we have the estimate
	\[
		|Z_{s,t}-Z_{s,u}-Z_{u,t}| \lesssim |Z_{s,u}|\,\omega_{u,t}^\frac{1}{p} + \omega_{s,t}^\frac{3}{p}.
	\]
Thus, condition \ref{ass:sewing-K} of \autoref{thm:discrete-sewing} is verified and we can infer that there exists $\delta\in (0,1)$ and $L\in (0,\infty)$, that both depend only on $\theta$, $m$, $\|X\|$, $\omega(S,T)$, $p$, $\|f\|_2$, and $\mathrm{supp}(f)$, such that 
	\[
		(s,t)\in\Delta\llbracket 0,N\rrbracket: \omega_{s,t}<\delta \quad\implies\quad |Z_{s,t}|\leq L\omega_{s,t}^\frac{3}{p}
	\]
as desired.
\end{proof}

Combining the local bound obtained in \autoref{thm:J-estimate} with the global boundedness of Euler's approximations from \autoref{thm:recurrence-bound} yields a global bound on Taylor's approximations \eqref{eq:euler-approx-taylor}.

\begin{lemma}
\label{thm:J-estimate-2}
Let $\theta\in (0,1)$. There exists $C\in (0,\infty)$ such that for every partition $\Pcal=\{S=U_0<U_1<\ldots<U_N=T\}$ (with $N\in\Nsf$, $N\geq 2$) of the interval $[S,T]$ satisfying \ref{ass:sewing-p}, we have
	\[
		|Z_{s,t}| \leq C\omega_{s,t}^\frac{3}{p}
	\]
for every $(s,t)\in \Delta \llbracket 0, N\rrbracket$. The constant $C$ depends only on $\theta$, $m$, $\|f\|_2$, $\mathrm{supp}(f)$, $p$, $\omega(S,T)$, and $\|X\|$.
\end{lemma}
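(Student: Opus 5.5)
The plan is to split pairs $(s,t)$ into those with small $\omega_{s,t}$, handled by \autoref{thm:J-estimate}, and those with large $\omega_{s,t}$, handled by the uniform boundedness in \autoref{thm:recurrence-bound}.

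Fix $\theta\in(0,1)$. Let $\delta\in(0,1)$ and $L\in(0,\infty)$ be the constants from \autoref{thm:J-estimate}. They depend only on $\theta$, $m$, $\|f\|_2$, $\mathrm{supp}(f)$, $p$, $\omega(S,T)$ and $\|X\|$, and not on the partition. For every partition $\Pcal$ satisfying \ref{ass:sewing-p} and every $(s,t)\in\Delta\llbracket 0,N\rrbracket$ with $\omega_{s,t}<\delta$, we already have $|Z_{s,t}|\le L\omega_{s,t}^{3/p}$. This settles the small case.

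For the complementary case $\omega_{s,t}\ge\delta$, bound $Z_{s,t}$ crudely. Let $R$ be the constant from \autoref{thm:recurrence-bound}, which is also partition-independent. By the triangle inequality and the regularity estimates \eqref{eq:rp_regularity},
\[
|Z_{s,t}|\le |Y_t-Y_0|+|Y_s-Y_0|+\|f\|_0\|X\|\left(\omega(S,T)^{\frac1p}+\omega(S,T)^{\frac2p}\right)\le 3R,
\]
using the choice \eqref{eq:R-constant} of $R$. Since $\omega_{s,t}\ge\delta$, this gives $|Z_{s,t}|\le 3R\,\delta^{-3/p}\omega_{s,t}^{3/p}$.

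Combining the two cases, set $C:=\max\{L,\,3R\delta^{-3/p}\}$. Then $|Z_{s,t}|\le C\omega_{s,t}^{3/p}$ for all $(s,t)\in\Delta\llbracket 0,N\rrbracket$. The constant $C$ depends only on the quantities listed in the statement, because $L$, $\delta$ and $R$ do. Note that $R$ depends on $\mathrm{diam}\,\mathrm{supp}(f)$ and $\|f\|_0$, both of which are controlled by $\mathrm{supp}(f)$ and $\|f\|_2$.

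There is no real obstacle here. The only point requiring care is checking that every constant used is independent of the partition $\Pcal$, and that is guaranteed by the dependence statements in \autoref{thm:recurrence-bound} and \autoref{thm:J-estimate}.
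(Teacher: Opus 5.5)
Your proposal is correct and follows essentially the same route as the paper: for $\omega_{s,t}<\delta$ you use \autoref{thm:J-estimate}, and for $\omega_{s,t}\ge\delta$ you use the crude bound $|Z_{s,t}|<3R$ from \autoref{thm:recurrence-bound} and \eqref{eq:R-constant}, converted into $3R\,\delta^{-3/p}\omega_{s,t}^{3/p}$. The only difference is that you take $C=\max\{L,3R\delta^{-3/p}\}$ where the paper takes $C\ge L+3R\delta^{-3/p}$, which is immaterial.
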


\begin{proof}
Let $R$ be as in \autoref{thm:recurrence-bound} and $\delta$ and $L$ be as in \autoref{thm:J-estimate}. Let $\Pcal:=\{S=U_0<U_1<\ldots<U_N=T\}$ (with $N\in\Nsf$, $N\geq 2$) be a partition of interval $[S,T]$ satisfying \ref{ass:sewing-p}. Let $C\in (0,\infty)$ be such that 
	\[
		C \geq L + 3R\delta^{-\frac{3}{p}}.
	\]
Let $s,t\in \Delta\llbracket 0,N\rrbracket$. If $\omega_{s,t}<\delta$, then $|Z_{s,t}|\leq L\omega_{s,t}^\frac{3}{p} < C\omega_{s,t}^\frac{3}{p}$ by \autoref{thm:J-estimate}. If $\omega_{s,t}\geq \delta$, then 
	\[
		|Z_{s,t}| 
			\leq 
				|Y_t-Y_0| 
				+ |Y_s-Y_0| 
				+ \left|
					\sum_{\tau\in\Fcal_n^2} \frac{1}{\sigma(\tau)} f_\tau(Y_s) X_{s,t}^\tau
				\right| 			
			< 
				3R
			< 
				C \delta^\frac{3}{p} 
			\leq 
				C\omega_{s,t}^\frac{3}{p}
	\]
by using \eqref{eq:R-constant}.
\end{proof}

Finally, we prove \autoref{thm:R-existence} by constructing the integral curve of the rough velocity field $f$ driven by the rough path $X$ as a limit of Euler approximations \eqref{eq:euler-approx} along a sequence of partitions of the interval $[S,T]$ whose mesh converges to zero.

\begin{proof}[Proof of \autoref{thm:R-existence}]

Let $\{\Pcal^{(a)}\}_{a\in\Nsf}$ be a sequence of partitions of interval $[S,T]$ denoted by 
	\[
		\Pcal^{(a)} = \{S=U_0^a<U_1^a <\ldots <U_{N_a}^a = T\}  \quad (\mbox{with } N_a\in\Nsf, N_a\geq 2),
	\]
such that the following hold:
	\begin{enumerate}[label=(\alph*)]
	\item There exists $\theta\in (0,1)$ such that for every $a\in\Nsf$, partition $\Pcal^{(a)}$ satisfies \ref{ass:sewing-p}.
	\item\label{refinement} For every $a\in\Nsf$, $\Pcal^{(a+1)}$ is a refinement of $\Pcal^{(a)}$. 
	\item\label{mesh} There is the convergence $|\Pcal^{(a)}|\to 0$ as $a\to\infty$.
	\end{enumerate}
For $a\in\Nsf$, let $\{Y_k^a\}_{k\in \llbracket 0,N^a\rrbracket}$ be the sequence defined by~\eqref{eq:euler-approx} corresponding to partition $\Pcal^{(a)}$ starting at $Y_0$. Let $\Scal:= \bigcup_{a\in\Nsf} \Pcal^{(a)}$. By property \ref{refinement} above, for every $s\in\Scal$, there exists $a_0(s)\in \Nsf$ such that for every $a\geq a_0(s)$ there is a unique $k$ such that $U_k^a=s$ and we define $Y^a(s) := Y_k^a$. Then there are constants $R\in (0,\infty)$ and $C\in (0,\infty)$ such that the estimate
	\begin{equation}
	\label{eq:bddness-Ya}
		|Y_0 - Y^{a}(s)| < R
	\end{equation}
holds for all $s\in\Scal$ and $a\geq a_0(s)$ and the estimate
	\begin{equation}
	\label{eq:intcurve-Ya-S}
		\left| 
			Y^{a}(t) - Y^{a}(s) 
			- \sum_{\tau\in\Fcal_n^2} \frac{1}{\sigma(\tau)} f_\tau(Y^a(s))X^\tau(s,t) 
		\right|
		\leq 
		C\omega(s,t)^\frac{3}{p}
	\end{equation}
holds for all $(s,t)\in\Delta\Scal$ and $a\geq \max\{a_0(s),a_0(t)\}$ by \autoref{thm:recurrence-bound} and \autoref{thm:J-estimate-2}, respectively. It follows from~\eqref{eq:bddness-Ya} by a diagonal argument that there is a subsequence $\{a_\ell\}_\ell$, $a_\ell\xrightarrow[\ell\to\infty]{}\infty$, such that for all $s\in\Scal$, $Y^{a_\ell}(s)$ converges to some $Y(s)\in\Rsf^m$ as $\ell\to\infty$. It follows from~\eqref{eq:intcurve-Ya-S} that there is the estimate
	\[ 
		\left|
			Y(t)-Y(s) 
			- \sum_{\tau\in\Fcal_n^2} \frac{1}{\sigma(\tau)} f_{\tau}(Y(s)) X^\tau(s,t)
		\right| 
		\leq 
		C\omega(s,t)^\frac{3}{p}
	\]
for all $(s,t)\in\Delta\Scal$ so that
	\[
		|Y(t)-Y(s)| 
			\leq 
			\|f_0\|\|X\| \left(\omega(s,t)^\frac{1}{p} 
			+ \omega(s,t)^\frac{2}{p} \right)
			+ C\omega(s,t)^\frac{3}{p}
	\]
holds for every $(s,t)\in\Delta\Scal$. Thus it is seen that $Y:\Scal\to \Rsf^m$ has a unique continuous extension to the closure of $\Scal$. By \ref{mesh}, this closure is the whole interval $[S,T]$, and, by continuity, this extension satisfies~\eqref{eq:integral-curve-Rm} as desired.
\end{proof}

\subsection{Uniqueness in \texorpdfstring{$\Rsf^m$}{Rᵐ}}
\label{sec:R-uniqueness}

In this section, we will prove the following

\begin{theorem}
\label{thm:R-uniqueness}
Let $[S,T]\subseteq\Rsf$ be an interval and let $m,n\in\Nsf$. Let $p\in [1,3)$, let $\omega$ be a control function on the interval $[S,T]$, and let $X$ be an $n$-dimensional branched rough path on the interval $[S,T]$ with regularity $p$ and control $\omega$. Let $f:\Rsf^m\to R_n^2\Rsf^m$ be a compactly supported admissible rough velocity field. Finally, let $Y, \widetilde{Y}: [S,T]\to \Rsf^m$ be two integral curves of $f$ driven by $X$ such that $Y(S)=\widetilde{Y}(S)$. Then there exists $\varepsilon\in (0,\infty)$ such that $Y=\widetilde{Y}$ on $[S,S+\varepsilon]$. 
\end{theorem}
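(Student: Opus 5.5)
We compare the two curves directly through a sewing-type argument applied to their difference, in the spirit of Davie's uniqueness proof. Set $D(t):=\widetilde{Y}(t)-Y(t)$; then $D(S)=0$. By \autoref{thm:integral-curve-delta} and the definition of integral curve (the global chart is admissible for every compact interval), there is a common constant $C$ such that both $Y$ and $\widetilde Y$ satisfy~\eqref{eq:integral-curve-Rm} on $\Delta[S,T]$. Write $K(s,t)$ and $\widetilde K(s,t)$ for the respective remainders. Subtracting the two expansions gives
\[
	D(t)-D(s) = \sum_{\tau\in\Fcal_n^2}\frac{1}{\sigma(\tau)}\bigl(f_\tau(\widetilde Y(s))-f_\tau(Y(s))\bigr)X^\tau(s,t) + \widetilde K(s,t)-K(s,t).
\]
The goal is to show that $\sup_{[S,S+\varepsilon]}|D|=0$ for small $\varepsilon$.

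\textbf{Step 1.} Introduce the two-parameter quantity $\Delta K(s,t):=\widetilde K(s,t)-K(s,t)$. We estimate its non-additivity $\Delta K(s,t)-\Delta K(s,u)-\Delta K(u,t)$ using the decomposition~\eqref{eq:K-split} from the proof of \autoref{thm:dJ-estimate}, applied once to $\widetilde Y$ and once to $Y$, and subtracting term by term. The terms $U_1$, $U_2$, $V$, $W$ are differences of $T_0$/$T_1$-type expressions evaluated at $(Y(s),Y(u))$ and $(\widetilde Y(s),\widetilde Y(u))$. This is exactly the situation covered by the second halves of \autoref{thm:diff-on-square-est}, with $\varepsilon,\tilde\varepsilon\lesssim\omega(s,u)^{1/p}$, $\delta=|D(s)|$, and $\zeta=|D(u)-D(s)|$. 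The $U_2$ difference additionally brings in $\Delta J$, which is controlled by $|\Delta K(s,u)|+|D(s)|\,\omega(s,u)^{1/p}$-type terms.

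\textbf{Step 2.} Define $A(s,t):=\sup_{s\le u\le v\le t}|D(v)-D(u)|$ and $\|D\|_{\infty,[S,t]}$. From the expansion and regularity~\eqref{eq:rp_regularity}, we have $|D(t)-D(s)|\lesssim \|D\|_\infty\,\omega(s,t)^{1/p}+|\Delta K(s,t)|$. Combining this with Step 1 yields, for $\omega(s,t)\le1$,
\[
	|\Delta K(s,t)-\Delta K(s,u)-\Delta K(u,t)|\lesssim |\Delta K(s,u)|\,\omega(u,t)^{1/p}+\bigl(\|D\|_{\infty,[S,t]}+\rho(s,t)\bigr)\omega(s,t)^{3/p}.
\]
Here $\rho(s,t):=\sup|\Delta K(u,v)|/\omega(u,v)^{3/p}$ over the subinterval. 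The factor in front of $\omega^{3/p}$ is linear in the difference, as it must be for uniqueness.

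\textbf{Step 3.} Run a sewing argument. Since $|\Delta K|\le 2C\omega^{3/p}$ a priori, the continuous version of \autoref{thm:discrete-sewing} (dyadic subdivision in $\omega$, with $\Delta K$ small as $\omega\to0$ because $3/p>1$, so the sum of local defects telescopes) gives
\[
	|\Delta K(s,t)|\le L\bigl(\|D\|_{\infty,[S,S+\varepsilon]}+\omega(S,S+\varepsilon)^{1/p}\rho\bigr)\omega(s,t)^{3/p}
\]
on $[S,S+\varepsilon]$. Choosing $\varepsilon$ so that $L\omega(S,S+\varepsilon)^{1/p}<1/2$ absorbs the $\rho$ term, so that $\rho\lesssim\|D\|_\infty$. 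Inserting this into Step 2 with $s=S$ and $D(S)=0$ gives $\|D\|_{\infty,[S,S+\varepsilon]}\lesssim \omega(S,S+\varepsilon)^{1/p}\|D\|_{\infty,[S,S+\varepsilon]}$. Shrinking $\varepsilon$ further forces $D\equiv0$. If $\omega(S,S+\varepsilon)=0$ for some $\varepsilon>0$, the conclusion is immediate, since both curves are then constant there by~\eqref{eq:integral-curve-Rm}.

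The main obstacle is Step 1/2: making the non-additivity estimate of $\Delta K$ \emph{linear} in the difference, with a genuinely small coefficient in front of $\rho$. The term $U_{1}$ (via $T_1$) and the coupling through $\zeta=|D(u)-D(s)|$ require care, so that only $\omega^{3/p}$ times (difference) appears and no unsmall multiple of $|\Delta K|$ itself. If this fails, I would instead argue in $p$-variation-type norms of $D$ on short intervals.
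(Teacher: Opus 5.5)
Your architecture matches the paper's: subtract the two expansions, bound the defect of $\Delta K$ via \eqref{eq:K-split} and \autoref{thm:diff-on-square-est}, sew, and close with $|D(t)-D(s)|\lesssim|\Delta K(s,t)|+|D(s)|\,\omega(s,t)^{1/p}$ (\autoref{thm:R-uniqueness-L1}). However, as written, Step 3 does not follow from Step 2. Your Step 2 display puts $\rho(s,t)\,\omega(s,t)^{3/p}$, with an $O(1)$ coefficient, into the $M$-slot of the sewing lemma. Sewing then only returns $\rho\le Lc(\|D\|_\infty+\rho)$, where $c$ is the implicit constant of Step 2, and this cannot be closed. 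The factor $\omega(S,S+\varepsilon)^{1/p}$ you attach to $\rho$ in Step 3 is not supported by anything before it.

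The missing observation is that no $\rho$-term arises at all. In the subtracted decomposition, $\Delta K(s,u)$ enters in only two ways:
\begin{itemize}
\item through $\zeta=|D(u)-D(s)|\lesssim|\Delta K(s,u)|+|D(s)|\,\omega(s,u)^{1/p}$, in the $U_1$, $V$, $W$ differences; for instance $|T_1(\tilde a,\tilde b)-T_1(a,b)|\lesssim\omega(s,u)^{1/p}|\Delta K(s,u)|+\omega(s,u)^{2/p}|D(s)|$;
\item through $\Delta J$ in the $U_2$ difference, where the correct bound is $|\Delta J(s,u)|\lesssim|\Delta K(s,u)|+|D(s)|\,\omega(s,u)^{2/p}$. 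The power is $2/p$, not the $1/p$ you wrote, because the two-vertex terms carry $X^\tau(s,u)$.
\end{itemize}
Every such term is then multiplied by some $X^\tau(u,t)$, hence by at least $\omega(u,t)^{1/p}$. This gives
\[
|\Delta K(s,t)-\Delta K(s,u)-\Delta K(u,t)|\lesssim|\Delta K(s,u)|\,\omega(u,t)^{1/p}+|D(s)|\,\omega(s,t)^{3/p},
\]
which is \autoref{thm:R-uniqueness-L2}. All dependence on $\Delta K$ sits in the $C|K(s,u)|\,\omega(u,t)^{1/p}$ slot that \autoref{thm:cts-sewing} is built for, and $M$ is just a constant times $\sup|D|$. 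The a priori bound $|\Delta K|\le 2C\omega^{3/p}$ guarantees that the supremum absorbed in the sewing argument is finite. You then get $|\Delta K(s,t)|\le L\sup_{[U,V]}|D|\,\omega(s,t)^{3/p}$ for $\omega(s,t)<\delta$, which is \autoref{thm:R-uniqueness-L3}, with nothing left to absorb.

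With that repaired, your endgame is a genuinely different and simpler route than the paper's. The paper argues by contradiction using the hitting times $T_k=\inf\{t:|D(t)|\ge 2^{-k}\}$. It applies \autoref{thm:R-uniqueness-L1} and \autoref{thm:R-uniqueness-L3} on $[T_{k+1},T_k]$ to get a lower bound on $\omega(T_{k+1},T_k)$ that is uniform in $k$, contradicting $\omega(S,T_k)\to0$. You apply the same two estimates once, on $[S,S+\varepsilon]$ with $s=S$, where $D(S)=0$. This yields $\sup_{[S,S+\varepsilon]}|D|\le c'\,\omega(S,S+\varepsilon)^{\gamma}\sup_{[S,S+\varepsilon]}|D|$ for some $\gamma>0$. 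The constants $c'$, $L$, $\delta$ depend only on $C$, $\widetilde C$, $m$, $\|f\|_3$, $\|X\|$, $p$, not on $\varepsilon$. So shrinking $\varepsilon$ forces $D\equiv0$ directly, with an explicit $\varepsilon$ and no case analysis.
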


Let $m,n\in\Nsf$ and let $[S,T]\subseteq\Rsf$ be an interval. Let also $p\in [1,3)$, let $\omega$ be a control function on interval $[S,T]$, and let $X$ be an $n$-dimensional branched rough path on interval $[S,T]$ with regularity $p$ and control $\omega$. Let $f:\Rsf^m\to R_n^2\Rsf^m$ be a compactly supported admissible rough velocity field and denote its coordinate representation in the global chart by $(f^i_\tau)$. Let $Y, \widetilde{Y}: [S,T]\to\Rsf^m$ be two integral curves of $f$ driven by $X$ such that $Y(S)=\widetilde{Y}(S)$. Denote
	\begin{align*}
		Z^i(s,t):= Y^i(t)-Y^i(s) - \sum_{\tau\in\Fcal_n^2} \frac{1}{\sigma(\tau)} f^i_\tau(Y(s))X^\tau(s,t),\\
		\widetilde{Z}^i(s,t):= \widetilde{Y}^i(t)-\widetilde{Y}^i(s) - \sum_{\tau\in\Fcal_n^2} \frac{1}{\sigma(\tau)} f^i_\tau(\widetilde{Y}(s))X^\tau(s,t),
	\end{align*}
and 
	\[ 
		\overline{Z}^i(s,t):= Z^i(s,t)-\widetilde{Z}^i(s,t)
	\]
for $i\in\llbracket 1,m \rrbracket$ and $(s,t)\in\Delta[S,T]$. As $Y$ and $\widetilde{Y}$ are integral curves, there exist finite positive constants $C, \widetilde{C}$ such that 
	\begin{equation}
	\label{eq:KK-est}
		|Z(s,t)|\leq C\omega(s,t)^\frac{3}{p}
			\quad\mbox{and}\quad 
		|\widetilde{Z}(s,t)| \leq C \omega(s,t)^\frac{3}{p}
	\end{equation}
for every $(s,t)\in\Delta[S,T]$. We give several technical lemmas first.

\begin{lemma}
\label{thm:R-uniqueness-L1}
The estimate 
	\[
		|\widetilde{Y}(t)-\widetilde{Y}(s) - (Y(t)-Y(s))| \lesssim |\overline{Z}(s,t)| + |\widetilde{Y}(s)-Y(s)|\omega(s,t)^\frac{1}{p}
	\]
holds for every $(s,t)\in\Delta[S,T]$ such that $\omega(s,t)\leq 1$. The constant in the above estimate depends only on $C$, $\widetilde{C}$, $m$, $\|f\|_1$, and $\|X\|$.
\end{lemma}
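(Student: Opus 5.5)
The estimate follows directly from the definitions of $Z$ and $\widetilde{Z}$. Subtracting them gives, for every $i\in\llbracket 1,m\rrbracket$ and $(s,t)\in\Delta[S,T]$, the identity
\[
	\widetilde{Y}^i(t)-\widetilde{Y}^i(s) - \big(Y^i(t)-Y^i(s)\big)
	= -\overline{Z}^i(s,t) + \sum_{\tau\in\Fcal_n^2}\frac{1}{\sigma(\tau)}\big(f^i_\tau(\widetilde{Y}(s)) - f^i_\tau(Y(s))\big)X^\tau(s,t).
\]
So it suffices to bound the sum on the right by a multiple of $|\widetilde{Y}(s)-Y(s)|\,\omega(s,t)^{1/p}$.

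For each $\tau$, the mean value theorem (the case $T_0$ of \autoref{thm:diff-on-square-est} with $a=Y(s)$, $b=\widetilde{Y}(s)$) gives
\[
	|f^i_\tau(\widetilde{Y}(s))-f^i_\tau(Y(s))| \le m\|f\|_1\,|\widetilde{Y}(s)-Y(s)|.
\]
The factor $m$ comes from the max-norm on $\Rsf^m$, and the estimate is global because $f$ is compactly supported, hence has bounded derivatives. The regularity estimates~\eqref{eq:rp_regularity} give
\[
	|X^{\ta}(s,t)|\le \|X\|\omega(s,t)^{1/p}
	\quad\mbox{and}\quad
	|X^{\tab}(s,t)|,\ |X^{\tc}(s,t)|\le \|X\|\omega(s,t)^{2/p}.
\]
Since $\omega(s,t)\le 1$, the control estimate~\eqref{eq:control} yields $\omega(s,t)^{2/p}\le\omega(s,t)^{1/p}$.

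Summing over the finitely many $\tau\in\Fcal_n^2$, with $\sigma(\tau)\ge 1$, bounds the sum by a constant depending only on $m$, $n$, $\|f\|_1$ and $\|X\|$, times $|\widetilde{Y}(s)-Y(s)|\,\omega(s,t)^{1/p}$. The triangle inequality then gives the claim. Strictly, the constant also depends on $n$ through the number of forests; the constants $C$ and $\widetilde{C}$ are not needed at all.

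There is no real obstacle here. The only point to check is that the mean-value bound holds globally, without restricting to a compact set, and this is guaranteed by the compact support of $f$.
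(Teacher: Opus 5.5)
Your proof is correct and is essentially the paper's argument. The paper only splits your one-line identity into two steps: it first bounds the difference of the first-order remainders, $\overline{J}$, by $|\overline{Z}(s,t)|+|\widetilde{Y}(s)-Y(s)|\,\omega(s,t)^{2/p}$, and then adds the first-order term, because that intermediate bound on $\overline{J}$ is reused in \autoref{thm:R-uniqueness-L2}. Your side remarks are also right: $C$ and $\widetilde{C}$ play no role, and even the $n$-dependence can be dropped, since $\|f\|_1$ already sums over all $\tau\in\Fcal_n^2$.
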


\begin{proof}
For $i\in \llbracket 1, m\rrbracket$ and $(s,t)\in\Delta[S,T]$, set
	\begin{align*}
		J^i(s,t) & := Y^i(t)-Y^i(s) - f^i_{\ta}(Y(s))X^{\ta}(s,t),\\
		\widetilde{J}^i(s,t) & := \widetilde Y^i(t)-\widetilde Y^i(s) - f^i_{\ta}(\widetilde Y(s))X^{\ta}(s,t), \\
		\overline{J}^i(s,t) & := \widetilde J^i(s,t)-J^i(s,t)
	\end{align*}
and 
	\begin{align*}
		I^i(s,t) & := Y^i(t)-Y^i(s),\\
		\widetilde I^i(s,t) & := \widetilde Y^i(t) - \widetilde Y^i(s),\\
		\overline{I}^i(s,t) & := \widetilde I^i(s,t)-I^i(s,t).
	\end{align*} 
Let $i\in\llbracket 1,m\rrbracket$ and let $(s,t)\in\Delta[S,T]$ be such that $\omega(s,t)\leq 1$. By using the Triangle Inequality, \autoref{thm:diff-on-square-est}, the regularity estimates in \eqref{eq:rp_regularity}, and the estimate \eqref{eq:control}, we obtain
	\begin{align*}
		|\overline{J}^i(s,t)| 
			 & \lesssim |\overline{Z}^i(s,t)| 
			  			 + |\widetilde{y}(s) - y(s)| \omega(s,t)^\frac{2}{p},
	\\
	    |\overline{I}^i(s,t)| 
	    	 & \lesssim |\overline{J}^i(s,t)| 
	    	 			 + |\tilde{y}(s)-y(s)|\omega(s,u)^\frac{1}{p}
	    	   \lesssim |\overline{Z}^i(s,t)| 
	    	   			 + |\tilde{y}(s)-y(s)|\omega(s,u)^\frac{1}{p}.
	\end{align*}
\end{proof}

\begin{lemma}
\label{thm:R-uniqueness-L2}
The estimate 
		\[
			 |\overline{Z}(s,t) - \overline{Z}(s,u) - \overline{Z}(u,t)| 
			 	\lesssim 
			 |\overline{Z}(s,u)|\omega(u,t)^\frac{1}{p}
			 + |\widetilde{Y}(s)-Y(s)|\omega(s,t)^\frac{3}{p}
		\]
holds for every $(s,u,t)\in [S,T]^3$ such that $s\leq u\leq t$ and such that $\omega(s,t)\leq 1$. The constant in the above estimate depends only on $C, \widetilde{C}$, $m$, $\|f\|_3$, and $\|X\|$.
\end{lemma}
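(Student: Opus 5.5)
We subtract the three-point decompositions for $Z$ and $\widetilde{Z}$ supplied by the proof of \autoref{thm:dJ-estimate}. Identity~\eqref{eq:K-split} holds for any function $y$ together with an admissible $f$, so we may apply it with $y=Y$ and with $y=\widetilde{Y}$. Subtracting gives
\[
	\overline{Z}^i(s,t)-\overline{Z}^i(s,u)-\overline{Z}^i(u,t)
	= \overline{U}^i_{\tb}(s,u)X^{\tb}(u,t) + \tfrac12\overline{V}^i_{\tab}(s,u)X^{\tab}(u,t) + \overline{W}^i_{\tc}(s,u)X^{\tc}(u,t),
\]
where $\overline{U}=\widetilde{U}-U$ etc.\ denote the differences of the corresponding coefficients along $\widetilde{Y}$ and $Y$. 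By~\eqref{eq:rp_regularity}, it then suffices to bound $\overline{U}_{\tb}(s,u)$ by a multiple of $\omega(u,t)^{-1/p}$ times the target. Likewise, it suffices to bound $\overline{V}(s,u)$ and $\overline{W}(s,u)$ by a multiple of $\omega(u,t)^{-2/p}$ times the target. In practice we aim for
\[
	|\overline{U}(s,u)|\lesssim |\overline{Z}(s,u)| + \delta\,\omega(s,u)^{\frac2p},\qquad |\overline{V}(s,u)|+|\overline{W}(s,u)|\lesssim |\overline{Z}(s,u)| + \delta\,\omega(s,u)^{\frac1p},
\]
with $\delta:=|\widetilde{Y}(s)-Y(s)|$. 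Multiplying by the powers of $\omega(u,t)$ and using~\eqref{eq:control} then gives the claim, since $\omega(s,u)^{a}\omega(u,t)^{b}\le\omega(s,t)^{(a+b)}$.

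The $\overline{V}$ and $\overline{W}$ terms are differences $T_0(\widetilde{a},\widetilde{b})-T_0(a,b)$ in the notation of \autoref{thm:diff-on-square-est}, with $a=Y(s)$, $b=Y(u)$, $\widetilde{a}=\widetilde{Y}(s)$, $\widetilde{b}=\widetilde{Y}(u)$. We take $\varepsilon,\widetilde{\varepsilon}\lesssim\omega(s,u)^{1/p}$, which follows from~\eqref{eq:KK-est}. We take $\zeta$ equal to the bound of \autoref{thm:R-uniqueness-L1}, namely $\zeta\lesssim|\overline{Z}(s,u)|+\delta\omega(s,u)^{1/p}$. This yields a bound of $\varepsilon(\delta+\zeta)+\zeta\lesssim |\overline{Z}(s,u)|+\delta\omega(s,u)^{1/p}$, as wanted.

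For $\overline{U}$ we split as $\overline{U}_1+\overline{U}_2$. The term $\overline{U}_1$ is a difference of first-order Taylor remainders $T_1$. \autoref{thm:diff-on-square-est} (which uses the $C^3$ norm, explaining $\|f\|_3$) gives
\[
	|\overline{U}_1|\lesssim\varepsilon^2(\delta+\zeta)+(\varepsilon+\widetilde{\varepsilon})\zeta \lesssim \omega(s,u)^{\frac2p}\delta + \omega(s,u)^{\frac1p}\big(|\overline{Z}(s,u)|+\delta\omega(s,u)^{\frac1p}\big).
\]
This is of the required form, with an extra factor $\omega(s,u)^{1/p}\le1$ on $|\overline{Z}|$.

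For $\overline{U}_2$ we write $\partial f_{\tb}(\widetilde{Y}(s))\widetilde{J}(s,u)-\partial f_{\tb}(Y(s))J(s,u)$ and split it into two pieces:
\[
	\big(\partial f_{\tb}(\widetilde{Y}(s))-\partial f_{\tb}(Y(s))\big)\widetilde{J}(s,u) \quad\mbox{and}\quad \partial f_{\tb}(Y(s))\,\overline{J}(s,u).
\]
The first piece is bounded by $\delta\,\omega(s,u)^{2/p}$, because $|\widetilde{J}|\lesssim\omega^{2/p}$ by~\eqref{eq:KK-est} and~\eqref{eq:rp_regularity}. The second piece is bounded by the estimate $|\overline{J}|\lesssim|\overline{Z}|+\delta\omega^{2/p}$ obtained inside the proof of \autoref{thm:R-uniqueness-L1}.

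The main obstacle is exactly this $\overline{U}_2$ term. It is the only place where the coefficient of $|\overline{Z}(s,u)|$ carries merely $\omega(u,t)^{1/p}$ rather than a higher power. This is precisely why the statement keeps $|\overline{Z}(s,u)|\omega(u,t)^{1/p}$ rather than absorbing it. All other contributions carry a factor $\delta$ multiplied by a total power of at least $3/p$, which after~\eqref{eq:control} gives $\delta\,\omega(s,t)^{3/p}$. We therefore need to check carefully that every $\delta$-term indeed reaches total exponent $3/p$. The $\overline{V}$ and $\overline{W}$ terms give $\delta\omega(s,u)^{1/p}\omega(u,t)^{2/p}$, and the $\overline{U}$ terms give $\delta\omega(s,u)^{2/p}\omega(u,t)^{1/p}$, so this holds.
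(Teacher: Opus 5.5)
Your proposal is correct and follows the paper's proof essentially step for step. The paper also subtracts the decomposition \eqref{eq:K-split} for $Y$ and $\widetilde{Y}$, and splits $\overline{U}$ into the difference of Taylor remainders, the term $\bigl(\partial f_{\tb}(\widetilde{Y}(s))-\partial f_{\tb}(Y(s))\bigr)\widetilde{J}$, and the term $\partial f_{\tb}(Y(s))\,\overline{J}$ (your two pieces of $\overline{U}_2$ are its $\overline{U}_2$ and $\overline{U}_3$); it then bounds all terms via \autoref{thm:diff-on-square-est} and the estimates from \autoref{thm:R-uniqueness-L1}, arriving at the same bounds you state.
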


\begin{proof}
Let $i\in \llbracket 1,m \rrbracket$ and let $(s,u,t)\in[S,T]^3$ be such that $s\leq u\leq t$ and $\omega(s,t)\leq 1$. Let $I,\widetilde{I}, \overline{I}$ and $J, \widetilde{J}, \overline{J}$ be as in the proof of \autoref{thm:R-uniqueness-L1}. Using assumption \eqref{eq:KK-est}, the regularity estimates in \eqref{eq:rp_regularity}, and the control estimate \eqref{eq:control} yields
	\begin{align*}
		& |J(s,u)|  \lesssim \omega(s,u)^\frac{2}{p}
		\quad\mbox{and}\quad 
		|\widetilde{J}(s,u)| \lesssim \omega(s,u)^\frac{2}{p},
		\\
		& |I(s,u)| \lesssim \omega(s,u)^\frac{1}{p}
		\quad\mbox{and}\quad 
		|\widetilde{I}(s,u)| \lesssim \omega(s,u)^\frac{1}{p}.
	\end{align*}
and, by the proof of \autoref{thm:R-uniqueness-L1}, we also have
	\begin{align*}
		|\overline{J}^i(s,u)| 
			 & \lesssim |\overline{Z}^i(s,u)| 
			  			 + |\widetilde{Y}(s) - Y(s)| \omega(s,u)^\frac{2}{p},
	\\
	    |\overline{I}^i(s,u)| 
	    	 &
	    	   \lesssim |\overline{Z}^i(s,u)| 
	    	   			 + |\tilde{Y}(s)-Y(s)|\omega(s,u)^\frac{1}{p}.
	\end{align*}
By using the decomposition \eqref{eq:K-split} for both $Z$ and $\widetilde{Z}$, we obtain
\begin{equation}
\label{eq:K-split-2}
	\overline{Z}^i(s,t) 
		- \overline{Z}^i(s,u) 
		- \overline{Z}^i(u,t) 
		= \overline{U}^i_{\tb}(s,u)X^{\tb}(u,t) 
			+ \frac{1}{2} \overline{V}_{\tab}^i(s,u)X^{\tab}(u,t) 
			+ \overline{W}^i_{\tc}(s,u)X^{\tc}(u,t)
\end{equation}
where we define
	\[
		\overline{U}_{\tb}^i(s,u) := \overline{U}_{1,\tb}^i(s,u) + \overline{U}_{2,\tb}^i(s,u) + \overline{U}_{3,\tb}^i(s,u)
	\]
with
	\begin{align*}
		\overline{U}^i_{1,\tb}(s,u) & := \left(
					f^i_{\tb}(\widetilde y(u)) 
					- f^i_{\tb}(\widetilde y(s)) 
					- \frac{\partial f^i_{\tb}}{\partial y^j}(\widetilde y(s)) \widetilde{I}^j(s,u)
				\right) \\
				& \qquad - \left(
					f^i_{\tb}(y(u)) 
					- f^i_{\tb}(y(s)) 
					- \frac{\partial f^i_{\tb}}{\partial y^j}(y(s)) I^j(s,u)
				\right),\\
		\overline{U}^i_{2,\tb}(s,u) & := 
					\left(
						\frac{\partial f^i_{\tb}}{\partial y^j} (\widetilde y(s))
						- \frac{\partial f^i_{\tb}}{\partial y^j} (y(s))
					\right)
					\widetilde{J}^j(s,u), \\
		\overline{U}^i_{3,\tb}(s,u) & := 
			\frac{\partial f^i_{\tb}}{\partial y^j}(y(s)) \overline{J}^j(s,u)
	\end{align*}
for $\beta\in \llbracket 1,n\rrbracket$ and 
	\begin{align*}
		\overline{V}^i_{\tab}(s,u) & := f^i_{\tab}(\widetilde{y}(u))-f^i_{\tab}(\widetilde{y}(s)) - (f^i_{\tab}(y(u))-f^i_{\tab}(y(s))),\\
		\overline{W}^i_{\tc}(s,u) & := f^i_{\tc}(\widetilde{y}(u))-f^i_{\tc}(\widetilde{y}(s)) - (f^i_{\tc}(y(u))-f^i_{\tc}(y(s)))
	\end{align*}
for $\alpha,\beta\in\llbracket 1,n\rrbracket$. It follows by appealing to \autoref{thm:diff-on-square-est} and to the control estimate \eqref{eq:control} that
	\begin{align*}
		|\overline{U}^i_{1,\tb}(s,u)| & \lesssim |\overline{K}(s,u)|\omega(s,u)^\frac{1}{p} + |\widetilde{y}(s)-y(s)| \omega(s,u)^\frac{2}{p}, \\
		|\overline{U}^i_{2,\tb}(s,u)| & \lesssim |\widetilde{y}(s)-y(s)|\omega(s,u)^\frac{2}{p}, \\
		|\overline{U}^i_{3,\tb}(s,u)| & \lesssim |\overline{K}(s,u)| + |\widetilde{y}(s) - y(s)|\omega(s,u)^\frac{2}{p},\\
		|\overline{V}^i_{\tab}(s,u)| & \lesssim |\overline{K}(s,u)| + |\widetilde{y}(s)-y(s)| \omega(s,u)^\frac{1}{p},\\
		|\overline{W}^i_{\tab}(s,u)| & \lesssim |\overline{K}(s,u)| + |\widetilde{y}(s)-y(s)| \omega(s,u)^\frac{1}{p}.
	\end{align*}
The claim now follows by inserting the above estimates into \eqref{eq:K-split-2} and using the regularity estimates in \eqref{eq:rp_regularity} and the control estimate \eqref{eq:control} again.
\end{proof}

We continue with an analogue of \autoref{thm:discrete-sewing} in continuous time.

\begin{lemma}[Continuous local sewing]
\label{thm:cts-sewing}
Let $[U,V]\subseteq\Rsf$ be an interval and let $K:\Delta[U,V]\to\Rsf^m$ be a function. Assume that $|K(s,t)|=0$ for every $(s,t)\in\Delta[U,V]$ such that $\omega(s,t)=0$ and that there are constants $C,M\in (0,\infty)$ such that 
	\begin{equation}
	\label{eq:cts-sewing-dK}
		|K(s,t)-K(s,u)-K(u,t)| \leq C|K(s,u)|\omega(u,t)^\frac{1}{p} + M\omega(s,t)^\frac{3}{p}
	\end{equation}
holds for every $(s,t)\in [U,V]^3$ satisfying $s\leq u\leq t$ and $\omega(s,t)\leq 1$. Then there exist constants $\delta\in (0,1)$ and $L \in (0,\infty)$ such that 
	\[
		| K(s,t) | \leq L M\omega(s,t)^\frac{3}{p}
	\] 
holds for every $(s,t)\in\Delta[U,V]$ such that $\omega(s,t)<\delta$. The constant $\delta$ depends only on $p$ and $C$ while $L$ only on $p$.
\end{lemma}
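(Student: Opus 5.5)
The plan is to mimic the proof of \autoref{thm:discrete-sewing}: pick the same $\delta\in(0,1)$ and $L\in(0,\infty)$ from the inequalities \eqref{eq:delta-L}, with some fixed $\theta\in[1/2,1)$. Then I would show $|K(s,t)|\le LM\omega(s,t)^{3/p}$ for $\omega(s,t)<\delta$ by replacing the induction on the number of partition intervals with an induction on the $\omega$-size of $(s,t)$. I expect this replacement, and in particular the base case, to be the real difficulty (see the last paragraph).

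\emph{Splitting step.} Fix $(s,t)$ with $0<\omega(s,t)<\delta$; the case $\omega(s,t)=0$ is covered by the assumption $K(s,t)=0$. The map $u\mapsto\omega(s,u)$ is continuous and nondecreasing on $[s,t]$, so there is $u\in[s,t]$ with $\omega(s,u)=\tfrac12\omega(s,t)$. By superadditivity $\omega(u,t)\le\tfrac12\omega(s,t)$. Since $u$ can be chosen exactly in continuous time, the auxiliary index $u+1$ used in the discrete proof is not needed. Inserting this $u$ into \eqref{eq:cts-sewing-dK} gives
\[
|K(s,t)|\le (1+C\delta^{1/p})|K(s,u)|+|K(u,t)|+M\omega(s,t)^{3/p}.
\]
If the claimed bound already holds on the two halves, the choice of $\delta$ and $L$ closes the estimate exactly as in \autoref{thm:discrete-sewing}:
\[
|K(s,t)|\le \Bigl(1+\bigl(2^{-3/p}+2^{-3/p}+2C\delta^{1/p}\bigr)L\Bigr)M\omega(s,t)^{3/p}\le LM\omega(s,t)^{3/p},
\]
where the last inequality uses $1-2^{1-3/p}>0$, which holds because $3/p>1$.

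\emph{Iterating.} To run the induction, I would set $\Lambda_k:=\sup\{|K(a,b)|\,\omega(a,b)^{-3/p}: \omega(a,b)\le 2^{-k}\delta\}$. The splitting step then yields a recursion of the form
\[
\Lambda_k\le 1+(2^{1-3/p}+2C\delta^{1/p})\Lambda_{k+1}.
\]
Iterating this $N$ times gives $\Lambda_0\le LM+\rho^N\Lambda_N$ with $\rho:=2^{1-3/p}+2C\delta^{1/p}<1$. Equivalently, one can unfold the dyadic splitting $N$ times along the $\omega$-uniform partition of $[s,t]$ into $2^N$ pieces. Either way, the conclusion follows once the remainder $\rho^N\Lambda_N$, or equivalently the sum of $K$ over the $2^N$ smallest intervals, tends to $0$ as $N\to\infty$.

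\emph{Main obstacle.} This limiting step is exactly where the stated hypotheses seem insufficient. The only information on small intervals is $K=0$ when $\omega=0$, and that does not force $\rho^N\Lambda_N\to0$. In fact I suspect the lemma fails as worded. Take $\omega(s,t)=t-s$ and $K(s,t)=t-s$. Then $K$ is additive, so the left-hand side of \eqref{eq:cts-sewing-dK} vanishes and the hypothesis holds for every $M>0$. Yet $|K(s,t)|=\omega(s,t)\le LM\omega(s,t)^{3/p}$ fails for small $\omega(s,t)$ when $p<3$. So in carrying out the plan I would first check whether an implicit a priori condition is intended. In every application in the paper, $K=\overline{Z}$ is built from integral curves and already satisfies $|K|\lesssim\omega^{3/p}$ by \eqref{eq:KK-est}, which makes every $\Lambda_N$ finite and uniformly bounded, so $\rho^N\Lambda_N\to0$ and the argument above goes through. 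I would look for a way to derive such a bound, or the vanishing of the small-scale sums, from the stated assumptions. If no derivation exists, the honest conclusion is that the statement needs this extra condition; I cannot certify the unconditional version, and the example above suggests it is false.
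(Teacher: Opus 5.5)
Your diagnosis is correct, and it is the real issue with this lemma. Your example ($\omega(s,t)=t-s$ on $[0,1]$, $K(s,t)=t-s$ in one coordinate) satisfies every hypothesis for all $C,M>0$. It violates the conclusion for every $\delta$ and $L$, because $(t-s)^{1-3/p}\to\infty$ as $t-s\to 0$ when $p<3$.

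The paper's proof has exactly the hole you located. It sets $F_0(\delta):=\sup\{|K(s,t)|\,\omega(s,t)^{-3/p}: 0<\omega(s,t)<\delta\}$ and derives $F_0(\delta)\le(\theta^{3/p}+(1-\theta)^{3/p}+C\delta^{1/p})F_0(\delta)+M$. It then concludes $F_0(\delta)\le LM$. That last step is valid only if $F_0(\delta)<\infty$; in your example $F_0(\delta)=\infty$ and the inequality holds vacuously.

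The correct statement needs an extra a priori hypothesis, for instance $|K(s,t)|\le A\,\omega(s,t)^{3/p}$ for all sufficiently small $\omega(s,t)$ with some unquantified $A$. With it, $\delta$ still depends only on $p$ and $C$, and $L$ only on $p$, since $A$ drops out. The lemma is used only in \autoref{thm:R-uniqueness-L3}. There $K=\overline{Z}$ satisfies $|\overline{Z}|\le (C+\widetilde{C})\,\omega^{3/p}$ by \eqref{eq:KK-est}, so the downstream uniqueness results survive.

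Under that hypothesis, your argument is the paper's argument in different packaging. The paper works with a general $\theta\in(0,1)$ and takes $u^*$ to be the last point with $\omega(s,u^*)=\theta\,\omega(s,t)$. It treats the case $\omega(u^*,t)=0$ separately, where $K(u^*,t)=0$ by hypothesis, and closes with the single self-referential inequality for $F_0(\delta)$.

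Your dyadic recursion should read $\Lambda_k\le M+\rho\Lambda_{k+1}$, with $M$ rather than $1$ in the constant term. It collapses to the paper's inequality, because $\Lambda_{k+1}\le\Lambda_k$ gives $\Lambda_0\le M+\rho\Lambda_0$. Moreover, finiteness of any single $\Lambda_N$ propagates back to $\Lambda_0$ through the recursion. So your condition $\rho^N\Lambda_N\to 0$ is equivalent to, not weaker than, the paper's implicit assumption $F_0(\delta)<\infty$.

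Two small points to fix when writing it up. Take the supremum defining $\Lambda_k$ only over pairs with $\omega(a,b)>0$. When $\omega(u,t)=0$, use $K(u,t)=0$ directly, as the paper does via its set $\Delta_2(\delta)$.
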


\begin{proof}
Fix any $\theta\in (0,1)$. Since $3/p>1$, we have that $1-\theta^\frac{3}{p}-(1-\theta)^\frac{3}{p}>0$ and therefore, we can choose $\delta\in (0,1)$ and any $L\in (0,\infty)$ that satisfy the following inequalities:
	\begin{equation}
	\label{eq:cts-sewing-delta-L}
		C\delta^\frac{1}{p} \leq \frac{1}{2}\left(1-\theta^\frac{3}{p} - (1-\theta)^\frac{3}{p}\right) \quad\mbox{and}\quad L\geq \frac{2}{1-\theta^\frac{3}{p} - (1-\theta)^\frac{3}{p}}.
	\end{equation}
Note first that if $(s,t)\in\Delta[U,V]$ is such that $\omega(s,t)=0$, then $|K(s,t)|=0$ holds by the assumption of the lemma and the claim holds trivially. For $(s,t)\in\Delta[U,V]$ such that $\omega(s,t) >0$, we denote
	\[ 
		u^* := \sup\{ u\in (s,t): \omega(s,u) = \theta\omega(s,t) \}.
	\]
(A point $u\in (s,t)$ that satisfies $\omega(s,u)=\theta\omega(s,t)$ always exists by the Intermediate Value Theorem.) It then follows by superadditivity of the control function that $\omega(u^*,t)\leq (1-\theta)\omega(s,t)$ but it can happen $\omega(u^*,t)=0$. We thus set
	\begin{align*}
		\Delta_0(\delta) & := \{(s,t)\in\Delta[S,T]: 0<\omega(s,t)<\delta\},\\
		\Delta_1(\delta) & := \{(s,t)\in\Delta[S,T]: 0<\omega(s,t)<\delta, \omega(u^*,t)>0\},\\
		\Delta_2(\delta) & := \{(s,t)\in\Delta[S,T]: 0<\omega(s,t)<\delta, \omega(u^*,t)=0\}
	\end{align*}
and also 
	\[
		F_j(\delta):= \sup_{(s,t)\in \Delta_j(\delta)} \frac{|K(s,t)|}{\omega(s,t)^\frac{3}{p}}, \quad j\in\{0,1,2\}.
	\]
Clearly, we have that $F_0=\max\{F_1,F_2\}$. Let now $(s,t)\in\Delta_0(\delta)$. We have that 
	\begin{equation}
	\label{eq:cts-sewing-est-split}
		|K(s,t)| \leq |K(s,u^*)| + |K(u^*,t)| + C|K(s,u^*)|\delta^\frac{1}{p} + M\omega(s,t)^\frac{3}{p}
	\end{equation}
by appealing to estimate \eqref{eq:cts-sewing-dK}. 
If $(s,t)\in\Delta_1(\delta)$, we divide inequality \eqref{eq:cts-sewing-est-split} by $\omega(s,t)^\frac{3}{p}$ and use 
	\[
		\frac{\omega(s,u^*)^\frac{3}{p}}{\omega(s,t)^\frac{3}{p}} =\theta^\frac{3}{p}, \quad \frac{\omega(u^*,t)^\frac{3}{p}}{\omega(s,t)^\frac{3}{p}} \leq (1-\theta)^\frac{3}{p}, \quad \frac{\omega(u^*,t)^\frac{3}{p}}{\omega(s,t)^\frac{3}{p}} \leq 1
	\]
to obtain 
	\[
		\frac{|K(s,t)|}{\omega(s,t)^\frac{3}{p}} \leq \left(\theta^\frac{3}{p} + (1-\theta)^\frac{3}{p} + C\delta^\frac{1}{p}\right) F_0(\delta) + M
	\]
which yields the inequality
	\begin{equation}
	\label{eq:cts-sewing-est-f1}
		F_1(\delta) \leq \left(\theta^\frac{3}{p} + (1-\theta)^\frac{3}{p} + C\delta^\frac{1}{p}\right) F_0(\delta) + M.
	\end{equation}
If, on the other hand, $(s,t)\in\Delta_2(\delta)$, we have $|K(u^*,t)|=0$ and inequality \eqref{eq:cts-sewing-est-split} reduces to 
	\[
		|K(s,t)| \leq |K(s,u^*)| + C|K(s,u^*)|\delta^\frac{1}{p} + M\omega(s,t)^\frac{3}{p}.
	\]
By the same arguments as before, we obtain 
	\begin{equation}
	\label{eq:cts-sewing-est-f2} 
		F_2(\delta) \leq \left( \theta^\frac{3}{p} + C\delta^\frac{1}{p}\right) F_0(\delta) + M.
	\end{equation}
Putting together \eqref{eq:cts-sewing-est-f1} and \eqref{eq:cts-sewing-est-f2} 
	\[
		F_0(\delta) = \max\{F_1(\delta),F_2(\delta)\} \leq \left(\theta^\frac{3}{p} + (1-\theta)^\frac{3}{p} + C\delta^\frac{1}{p}\right) F_0(\delta) + M
	\]
and as $\delta$ and $L$ were chosen so that \eqref{eq:cts-sewing-delta-L} are satisfied, we obtain $F_0(\delta)\leq LM$ as desired.
\end{proof}

\begin{lemma}
\label{thm:R-uniqueness-L3}
There exist constants $\delta\in (0,1)$ and $L\in (0,\infty)$ such that for every interval $[U,V]\subseteq [S,T]$, we have 
	\[
		|\overline{Z}(s,t)|\leq L \sup_{r\in [U,V]} |\widetilde{Y}(r)-Y(r)| \omega(s,t)^\frac{3}{p}
	\]
for every $(s,t)\in\Delta[U,V]$ satisfying $\omega(s,t)<\delta$. Both constants $\delta$ and $L$ depend only on $C, \widetilde{C}$, $m$, $\|f\|_3$, $\|X\|$, and $p$.
\end{lemma}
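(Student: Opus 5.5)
The plan is to apply \autoref{thm:cts-sewing} on the interval $[U,V]$ to the function $K:=\overline{Z}$ restricted to $\Delta[U,V]$. For the constants, I would take $C$ to be the implicit constant of \autoref{thm:R-uniqueness-L2}, and $M := C'\,\sup_{r\in[U,V]}|\widetilde{Y}(r)-Y(r)|$ with $C'$ the same implicit constant. If this supremum is zero, then $Y=\widetilde{Y}$ on $[U,V]$ and $\overline{Z}\equiv 0$ there, so the claim is trivial. Hence we may assume $M>0$, as \autoref{thm:cts-sewing} requires $M\in(0,\infty)$.

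The two hypotheses of \autoref{thm:cts-sewing} are checked as follows.
\begin{enumerate}
\item \emph{Vanishing on the diagonal of $\omega$.} If $\omega(s,t)=0$, then by \eqref{eq:KK-est} we have $|Z(s,t)|\leq C\omega(s,t)^{3/p}=0$, and likewise $|\widetilde{Z}(s,t)|=0$. Hence $\overline{Z}(s,t)=0$.
\item \emph{Almost-additivity estimate \eqref{eq:cts-sewing-dK}.} This is exactly \autoref{thm:R-uniqueness-L2}, with $|\widetilde{Y}(s)-Y(s)|$ bounded by the supremum over $[U,V]$, valid for $s\leq u\leq t$ in $[U,V]$ with $\omega(s,t)\leq 1$. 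Here we use that the constant in \autoref{thm:R-uniqueness-L2} depends only on $C,\widetilde{C},m,\|f\|_3,\|X\|$ and not on $[U,V]$.
\end{enumerate}

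\autoref{thm:cts-sewing} then yields $\delta\in(0,1)$ depending only on $p$ and the constant $C$ of \autoref{thm:R-uniqueness-L2}, and $L_0$ depending only on $p$, such that
\[
|\overline{Z}(s,t)|\leq L_0 C' \sup_{[U,V]}|\widetilde{Y}-Y|\,\omega(s,t)^{3/p}
\]
whenever $\omega(s,t)<\delta$. Setting $L:=L_0C'$ gives the statement, and neither $\delta$ nor $L$ depends on the interval $[U,V]$, as required.

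The main point needing care is this uniformity of the constants in $[U,V]$. It holds because the constants in \autoref{thm:R-uniqueness-L2} and \autoref{thm:cts-sewing} were stated to depend only on the listed global quantities. One should also note that \autoref{thm:cts-sewing} is stated for a general interval and a control defined there. The restriction of $\omega$ to $\Delta[U,V]$ is still a control function, so the lemma applies directly.
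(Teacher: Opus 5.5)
Your proposal is correct and follows the paper's proof: you bound $|\widetilde{Y}(s)-Y(s)|$ in \autoref{thm:R-uniqueness-L2} by the supremum over $[U,V]$ and then invoke \autoref{thm:cts-sewing}, and your checks of the diagonal hypothesis and of the case $\sup_{[U,V]}|\widetilde{Y}-Y|=0$ are details the paper leaves implicit. One point worth adding is that the absorption step in the proof of \autoref{thm:cts-sewing} tacitly requires the a priori bound $\sup\{|K(s,t)|\,\omega(s,t)^{-3/p}: 0<\omega(s,t)<\delta\}<\infty$ (without it, $K(s,t)=t-s$ with $\omega(s,t)=t-s$ would contradict that lemma); for $K=\overline{Z}$ this bound is supplied by \eqref{eq:KK-est}, which gives $|\overline{Z}(s,t)|\leq (C+\widetilde{C})\,\omega(s,t)^{3/p}$.
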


\begin{proof}
By \autoref{thm:R-uniqueness-L2}, we have that
	\[
			 |\overline{Z}(s,t) - \overline{Z}(s,u) - \overline{Z}(u,t)|
			 	\lesssim 
			 |\overline{Z}(s,u)|\omega(u,t)^\frac{1}{p}
			 + |\widetilde{Y}(s)-Y(s)|\omega(s,t)^\frac{3}{p}
	\]
holds for every $(s,u,t)\in[S,T]^3$ satisfying $s\leq u\leq t$ and $\omega(s,t)\leq 1$ with the constant depending only on $C, \widetilde{C}, m, \|f\|_3,$ and $\|X\|$. Thus we have, for an interval $[U,V]\subseteq [S,T]$, that 
	\[
			 |\overline{Z}(s,t) - \overline{Z}(s,u) - \overline{Z}(u,t)|
			 	\lesssim 
			 |\overline{Z}(s,u)|\omega(u,t)^\frac{1}{p}
			 + \sup_{r\in [U,V]}|\widetilde{Y}(r)-Y(r)|\omega(s,t)^\frac{3}{p}
	\]
holds whenever $(s,u,t)\in [U,V]^3$ satisfies $s\leq u\leq t$ and $\omega(s,t)\leq 1$. The claim now follows by \autoref{thm:cts-sewing}.
\end{proof}

The proof of \autoref{thm:R-uniqueness} can be given now.

\begin{proof}[Proof of \autoref{thm:R-uniqueness}] 
Let $\{T_k\}$ be a sequence of times defined by  
	\[
		T_k := \inf\{t\in [S,T]: |\widetilde{Y}(t)-Y(t)|\geq 2^{-k}\}\wedge T, \quad k\in\Nsf.
	\]
Assume that there is no $\varepsilon>0$ such that $Y=\widetilde{Y}$ holds on $[S,S+\varepsilon]$. Then the sequence $\{T_k\}$ converges to $S$ as $k\to\infty$ and it follows (by appealing to superadditivity and continuity of the control function $\omega$) that
	\begin{equation}
	\label{eq:R-uniqueness-convergence}
		\omega(T_{k+1},T_k) \leq \omega(S,T_k) \xrightarrow[k\to\infty]{} 0.
	\end{equation}
Consequently, we can find integers $k$ and $n$ such that
	\begin{equation}
	\label{eq:R-uniqueness-est-1}
		T_{k+1} < T_k < T
			\quad\mbox{and}\quad 
		2^{-(n+1)}\leq \omega(T_{k+1},T_{k}) \leq 2^{-n} < \delta
	\end{equation}
where $\delta$ is as in \autoref{thm:R-uniqueness-L3}. We have 
	\begin{equation}
	\label{eq:R-uniqueness-est-2}
		\sup_{r\in [T_{k+1},T_k]} |\widetilde{Y}(r)-Y(r)| \leq 2^{-k}
	\end{equation}
by the definition of $\{T_k\}$ and we obtain
	\begin{align*}
	2^{-(k+1)} 
		& \leq |\widetilde{Y}(T_{k})-\widetilde{Y}(T_{k+1}) - (Y(T_k)-Y(T_{k+1}))| \\
		& \lesssim |\overline{Z}(T_{k+1},T_k)| 
			+ |\widetilde{Y}(T_{k+1})-Y(T_{k+1})|
			\omega(T_{k+1},T_k)^\frac{1}{p} \\
		& \lesssim \sup_{r\in [T_{k+1},T_k]} |\widetilde{Y}(r)-Y(r)| \omega(T_{k+1},T_k)^\frac{3}{p} + |\widetilde{Y}(T_{k+1})-Y(T_{k+1})| \omega(T_{k+1},T_k)^\frac{1}{p}\\
		& \lesssim 2^{-k-\frac{n}{p}}
	\end{align*}
by using the Reverse Triangle Inequality and the definition of $\{T_k\}$, and then \autoref{thm:R-uniqueness-L1}, \autoref{thm:R-uniqueness-L3}, the upper bound for $\omega$ in \eqref{eq:R-uniqueness-est-1}, and estimate \eqref{eq:R-uniqueness-est-2} successively. It follows that there exists a constant $B\in (0,\infty)$ that only depends on $C$, $\widetilde{C}$, $m$, $\|f\|_3$, $\|X\|$, and $p$ such that $2^{-n} \geq B$. Thus we obtain that $2\omega(T_{k+1},T_k) \geq B$ holds by the lower bound for $\omega$ in \eqref{eq:R-uniqueness-est-1} which contradicts \eqref{eq:R-uniqueness-convergence}.
\end{proof}

\subsection{Existence and uniqueness on manifolds}
\label{sec:M-existence-uniqueness}

In this section, we will prove the existence and uniqueness of integral curves for a rough velocity field on a general manifold $M$. This will be done by gluing integral curves constructed in coordinate patches. We begin with the uniqueness result. 

\begin{theorem}
\label{thm:M-uniqueness}
Let $[S,T]\subseteq\Rsf$ be an interval and let $m,n\in\Nsf$. Let $p\in [1,3)$, let $\omega$ be a control function on interval $[S,T]$, and let $X$ be an $n$-dimensional branched rough path on interval $[S,T]$ with regularity $p$ and control $\omega$. Let $M$ be an $m$-dimensional manifold, let $f: M\to R_n^2M$ be an admissible rough velocity field, and let $Y_1$ and $Y_2$ be two integral curves of $f$ driven by $X$ defined on interval $[S,T]$ such that $Y_1(S)=Y_2(S)$. Then $Y_1=Y_2$ on the whole interval $[S,T]$.
\end{theorem}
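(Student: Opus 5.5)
We argue by a connectedness argument on the set of agreement times. Let
\[
A:=\{t\in[S,T]: Y_1=Y_2 \text{ on } [S,t]\}
\]
and $t^*:=\sup A$. Since $Y_1$ and $Y_2$ are continuous and $M$ is Hausdorff, $\{t: Y_1(t)=Y_2(t)\}$ is closed, so $t^*\in A$. The aim is to show $t^*=T$. Suppose, for contradiction, that $t^*<T$; we will show that $Y_1$ and $Y_2$ agree on some interval $[t^*,t^*+\varepsilon]$, which contradicts the definition of $t^*$.

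\emph{Localisation to a chart.} Set $q:=Y_1(t^*)=Y_2(t^*)$ and choose a chart $(U,x)$ around $q$ with $x(U)$ convex. By continuity there is $\eta>0$ such that $Y_1([t^*,t^*+\eta])$ and $Y_2([t^*,t^*+\eta])$ lie in a compact set $K\subseteq U$. Pick a smooth bump function $\chi:\Rsf^m\to[0,1]$ that is compactly supported in $x(U)$ and equal to $1$ on a neighbourhood of $x(K)$. Rescale the coordinate representation $(f^i_\tau)$ of $f$ by $\chi$ as in \eqref{eq:rescaling}, and extend the result by zero to obtain a compactly supported rough velocity field $g$ on $\Rsf^m$. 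By the remark on rescaling, $g$ is admissible. On the neighbourhood where $\chi\equiv1$ the derivative $\partial\chi$ vanishes, so $g^i_\tau=f^i_\tau$ there.

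\emph{Reduction to $\Rsf^m$.} Since $Y_1$ and $Y_2$ are integral curves, applying \autoref{def:integral-curve} with the compact interval $I=[t^*,t^*+\eta]$ and the chart $(U,x)$ shows that $x\circ Y_1$ and $x\circ Y_2$ satisfy \eqref{eq:integral-curve-Rm} on $[t^*,t^*+\eta]$ with the coefficients $f^i_\tau$. Because the curves stay in $K$, where $f^i_\tau=g^i_\tau$, these coefficients may be replaced by $g^i_\tau$. The control $\omega$ and the rough path $X$, restricted to $[t^*,t^*+\eta]$, remain a control and a branched rough path of the same regularity. Hence $x\circ Y_1$ and $x\circ Y_2$ are integral curves in $\Rsf^m$ of the compactly supported admissible field $g$, driven by $X$ on $[t^*,t^*+\eta]$, with the same starting point $x(q)$.

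\emph{Conclusion.} \autoref{thm:R-uniqueness}, applied on the interval $[t^*,t^*+\eta]$, gives $\varepsilon>0$ such that $x\circ Y_1=x\circ Y_2$ on $[t^*,t^*+\varepsilon]$. Since $x$ is injective, $Y_1=Y_2$ there, which is the required contradiction. Therefore $t^*=T$ and $Y_1=Y_2$ on all of $[S,T]$.

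\emph{Expected difficulty.} The only delicate point is that the bump modification must not change the coefficients along the curves. The term $\partial\chi$ in the rescaled component $\widetilde{f}_{\tc}$ is the one to watch; it vanishes because $\chi\equiv1$ on a neighbourhood of $x(K)$, not merely on $x(K)$ itself. One must also note that the constants in \eqref{eq:integral-curve} are allowed to depend on the chosen subinterval, and \autoref{def:integral-curve} supplies such a constant on $[t^*,t^*+\eta]$.
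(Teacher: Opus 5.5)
Your proof is correct and follows the paper's argument: you take the first time the curves could separate, localize in a chart with a bump-function rescaling, extend by zero to a compactly supported admissible field on $\Rsf^m$, and apply \autoref{thm:R-uniqueness}. Your requirement $\chi\equiv 1$ near $x(K)$ plays the role of the paper's $\delta\equiv 1$ on $K$ with $0\le\delta\le 1$, which also forces the derivative of $\delta$ to vanish on $K$. The only difference is that you read the estimate off directly in the chart $(U,x)$ instead of passing through \autoref{thm:related-fields}; to invoke \autoref{thm:R-uniqueness} as stated, note that this single global-chart estimate on the whole interval already makes $x\circ Y_i$ integral curves by \autoref{thm:int-curve-local}, and it is all that the proof of \autoref{thm:R-uniqueness} actually uses.
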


\begin{proof}
We prove the claim by contradiction. Suppose that there exists $s\in [S,T]$ such that $Y_1(s)\neq Y_2(s)$ and let
	\begin{equation}
	\label{eq:M-uniqueness:t} 
		t:= \inf\{s\in [S,T]: Y_1(s)\neq Y_2(s)\}.
	\end{equation}
Then $t>S$ and $Y_1=Y_2$ on $[S,t)$ so that $Y_1(t)=Y_2(t)$ holds as well by continuity. If $t=T$, the proof is finished. Let us therefore assume that $t<T$. Let $Y_0:=Y_1(t)=Y_2(t)$ and choose a chart $(U,x)$ around $Y_0$. Let $V$ be a compact neighborhood of $Y_0$ contained in $U$, let $K$ be a compact neighborhood of $Y_0$ contained in $V$, and let $\delta: M\to [0,1]$ be a smooth bump function supported in $V$ with unit value on $K$. Let $\widetilde{f}: U\to R_n^2 U$ be the restriction of the rescaling of $f$ by $\delta$ to $U$ and let $(\widetilde{f}^i_{\tau})$ be the coordinate representation of $\widetilde{f}$ in the induced chart $(\overline{U},\overline{x})$ on $R_n^2M$. Let $g: \Rsf^m\to R_n^2\Rsf^m$ be the rough velocity field that is obtained by extending the functions $\widetilde{f}^i_{\tau} \circ {x}^{-1}: x(U)\to \Rsf$ (for $i\in\llbracket 1,m \rrbracket$, $\tau\in\Fcal_n^2$) to the whole of $\Rsf^m$ by zero outside of $x(U)$. (Such extension is smooth as $\delta$ vanishes near the boundary of $U$.) It follows that $g$ is compactly supported, admissible, and $x$-related to $\widetilde{f}$.

By continuity of $Y_1$ and $Y_2$, there exists $\theta\in (0,\infty)$ such that both $Y_1$ and $Y_2$ take values in $K$ on the interval $[t,t+\theta]$. Denote by $\widetilde{Y}_1$ and $\widetilde{Y}_2$ the restrictions of $Y_1$ and $Y_2$ to $[t,t+\theta]$, respectively. Since $f=\widetilde{f}$ on $K$, it follows that both $\widetilde{Y}_1$ and $\widetilde{Y}_2$ are integral curves of $\widetilde{f}$ driven by $X$. Hence, both $x\circ\widetilde{Y}_1$ and $x\circ\widetilde{Y}_2$ are integral curves of $g$ driven by $X$ by \autoref{thm:related-fields} such that both start at $x(Y_0)$. But then there exists $\varepsilon\in (0,\infty)$ such that $x\circ\widetilde{Y}_1 = x\circ\widetilde{Y}_2$ on $[t,t+\varepsilon]$ by \autoref{thm:R-uniqueness}. But then also $Y_1=Y_2$ on $[t,t+\varepsilon]$ which contradicts the definition of $t$ in~\eqref{eq:M-uniqueness:t}.
\end{proof}

We continue with two basic existence theorems. First, we give a lemma that transfers the global existence result from \autoref{thm:R-existence} to a manifold.

\begin{lemma}
\label{thm:M-local-existence-1}
Let $[S,T]\subseteq\Rsf$ be an interval and let $n\in\Nsf$. Let $p\in [1,3)$, let $\omega$ be a control function on interval $[S,T]$, and let $X$ be an $n$-dimensional branched rough path on interval $[S,T]$ with regularity $p$ and control $\omega$. Let $M$ be a manifold, let $f: M\to R_n^2M$ be an admissible rough velocity field, and let $q\in M$. Then there exists $\varepsilon\in (0,\infty)$ and a neighborhood $U$ of $q$ such that for every $Y_0\in U$ and every $S_0\in [S,T]$, there exists an integral curve $Y: [S_0,(S_0+\varepsilon)\wedge T]\to M$ of $f$ driven by $X$ that satisfies $Y(S_0)=Y_0$. 
\end{lemma}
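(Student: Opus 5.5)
The plan is to localize $f$ near $q$ by the rescaling construction used in the proof of \autoref{thm:M-uniqueness}, apply \autoref{thm:R-existence} in $\Rsf^m$, and then pull the solution back to $M$. The time $\varepsilon$ will come from a uniform modulus-of-continuity estimate for the Euclidean integral curves. That estimate does not depend on the starting point or the starting time.

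Fix a chart $(W,x)$ around $q$. Choose a compact neighborhood $V\subseteq W$ of $q$ and a compact neighborhood $K\subseteq V$ of $q$ with $x(K)$ a closed ball of radius $2r$ centered at $x(q)$. Let $\delta: M\to[0,1]$ be a smooth bump function supported in $V$ and equal to $1$ on $K$. Let $\widetilde f$ be the rescaling of $f$ by $\delta$, restricted to $W$. Let $g:\Rsf^m\to R_n^2\Rsf^m$ be the extension by zero of $\widetilde f^i_\tau\circ x^{-1}$. By the remarks on rescaling and the argument in the proof of \autoref{thm:M-uniqueness}, $g$ is smooth, compactly supported, admissible, and $x$-related to $\widetilde f$. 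Moreover, \eqref{eq:rescaling} shows that $\widetilde f=f$ on the interior of $K$, since $\delta\equiv1$ and $\partial\delta=0$ there. Set $U:=x^{-1}(B(x(q),r))$.

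Fix $S_0\in[S,T]$ and $Y_0\in U$. The restriction of $X$ to $[S_0,T]$ is a branched rough path with control $\omega|_{\Delta[S_0,T]}$ and $\|X|_{[S_0,T]}\|\le\|X\|$. \autoref{thm:R-existence} then gives $Z:[S_0,T]\to\Rsf^m$ with $Z(S_0)=x(Y_0)$ satisfying \eqref{eq:integral-curve-Rm}. The key point is uniformity. I will revisit the construction of $Z$ as a limit of Euler schemes. By \autoref{thm:J-estimate-2} and \autoref{thm:recurrence-bound}, the constant $C$ in \eqref{eq:integral-curve-Rm} depends only on $\theta$, $m$, $\|g\|_2$, $\mathrm{supp}(g)$, $p$, $\omega(S,T)$ and $\|X\|$, all of which can be bounded independently of $S_0$ and $Y_0$. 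Consequently
\[
|Z(t)-Z(S_0)|\le \|g\|_0\|X\|\bigl(\omega(S_0,t)^{\frac1p}+\omega(S_0,t)^{\frac2p}\bigr)+C\omega(S_0,t)^{\frac3p}=:h(\omega(S_0,t)).
\]
By uniform continuity of $\omega$ on the compact set $\Delta[S,T]$, with $\omega(s,s)=0$, there is $\varepsilon>0$ such that $h(\omega(S_0,t))<r$ whenever $S_0\le t\le S_0+\varepsilon$. Hence $Z$ stays in the open ball $B(x(q),2r)\subseteq x(\mathrm{int}\,K)$ on $[S_0,(S_0+\varepsilon)\wedge T]$.

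Define $Y:=x^{-1}\circ Z$ on $[S_0,(S_0+\varepsilon)\wedge T]$. Since $Z$ is an integral curve of $g$ in the global chart, and $g$ agrees there with the coordinate representation of $\widetilde f=f$ on $\mathrm{int}\,K$, the estimate \eqref{eq:integral-curve} holds for $Y$ in the chart $(W,x)$ on the whole interval with the coordinate representation of $f$. By \autoref{thm:int-curve-local}, with the single interval $[u,v]$ equal to the whole domain and $v=T$ or $\omega(r,v)>0$ handled trivially since one interval suffices, or alternatively by \autoref{thm:related-fields} applied to $x^{-1}$, $Y$ is an integral curve of $f$ driven by $X$ with $Y(S_0)=Y_0$.

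The main obstacle is the uniformity claim: the constants from \autoref{thm:R-existence} must not depend on $S_0$ or $Y_0$. I handle it by noting that all dependencies listed in \autoref{thm:J-estimate-2} are monotone in $\omega(S_0,T)\le\omega(S,T)$ and in $\|X\|$, and do not involve the initial point. A second subtlety is applying the change-of-chart property on a compact interval whose image lies in $W$, which is fine because $x(Z)$ stays in $x(K)$.
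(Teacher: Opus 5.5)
Your proposal is correct and follows essentially the paper's route. You localize $f$ near $q$ by rescaling with a bump function in a chart, extend by zero to a compactly supported admissible field $g$ on $\Rsf^m$, use \autoref{thm:R-existence} with constants independent of $S_0$ and the initial point to keep the Euclidean curve inside the chart image of $K$ for a uniform time $\varepsilon$, and pull back via \autoref{thm:related-fields} (or \autoref{thm:int-curve-local}). Tracing the uniformity of the constants through \autoref{thm:recurrence-bound} and \autoref{thm:J-estimate-2}, using $\omega(S_0,T)\le\omega(S,T)$, just makes explicit what the paper asserts in one line.
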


\begin{proof}
Let $(V,y)$ be a chart around $q$, let $W$ be a compact neighborhood of $q$ contained in $V$, let $K$ be another compact neighborhood of $q$ contained in $W$, and let $\delta:M\to [0,1]$ be a smooth bump function supported in $W$ that takes unit value on $K$. Let $\widetilde{f}:V\to R_n^2V$ be the restriction of the rescaling of $f$ by $\delta$ to $V$. As in the proof of \autoref{thm:M-uniqueness}, let $g: \Rsf^m \to R_n^2\Rsf^m$ be the rough velocity field obtained by extending the representation of $\widetilde{f}$ in the induced chart $(\overline{V},\overline{y})$ on $R_n^2M$ expressed in coordinates from $y(V)$ to the whole of $\Rsf^m$ by zero outside of $y(V)$. 

Then $g$ is compactly supported and admissible so that, by \autoref{thm:R-existence}, for every $c_0\in\Rsf^m$ and every $S_0\in [S,T]$, there is a function $c: [S_0,T]\to \Rsf^m$ satisfying $c(S_0)=c_0$ and such that the inequality
	\begin{equation}
	\label{eq:M-existence-lemma-1}
		\left|c^i(t) - c^i(s) - \sum_{\tau\in\Fcal_n^2} \frac{1}{\sigma(\tau)} g^i_\tau(c(s)) X^\tau(s,t)\right| \leq C\omega(s,t)^\frac{3}{p}
	\end{equation}
holds for every $i\in\llbracket 1,m \rrbracket$ and all $(s,t)\in \Delta[S_0,T]$. (The constant in the above estimate can be chosen independently of $S_0$ and $c_0$ as it only depends on $m$, $n$, $\|g\|_3$, and $\|X\|$.) It follows that 
	\begin{equation}
	\label{eq:M-existence-lemma-2}
		|c^i(t)-c^i(s)| \lesssim \omega(s,t)^\frac{3}{p} + \omega(s,t)^\frac{2}{p} + \omega(s,t)^\frac{1}{p} \leq \widetilde{C}\omega(s,t)^\frac{1}{p}
	\end{equation}
holds for every $i\in\llbracket 1,m\rrbracket$ and $(s,t)\in\Delta[S_0,T]$ with $\widetilde{C}\in (0,\infty)$ that only depends on $m$, $n$, $\|f\|_3$, and $\|X\|$.

Now, let $r\in (0,\infty)$ be such that the ball with center $y(q)$ and radius $r$, $B(y(q),r)$, is contained in $y(K)$. Let also $U:= y^{-1}(B(y(q),r/2))$ and let $\varepsilon\in (0,\infty)$ be such that $\widetilde{C}\omega(s,t)^{1/p} \leq r/2$ holds for all $(s,t)\in\Delta[S,T]$ satisfying $|s-t|\leq \varepsilon$. 
Let $Y_0\in U$ and $S_0\in[S,T]$, and let $c:[S_0,T]\to\Rsf^m$ be the function such that $c(S_0)=y(Y_0)$ and such that~\eqref{eq:M-existence-lemma-1} holds. Then
	\[ 
		|c^i(t) - y^i(q)| 
			\leq |c^i(t)-c^i(S_0)| + |y^i(Y_0)-y^i(q)| 
			\leq \widetilde{C}{\omega}(t,S_0)^\frac{1}{p} + \frac{r}{2} 
			\leq r
	\]
holds for all $t\in [S_0,(S_0+\varepsilon)\wedge T]$ by appealing to~\eqref{eq:M-existence-lemma-2} so that $c$ stays in $B(y(q),r)\subseteq y(K)$ up to time $(S_0+\varepsilon)\wedge T$. Now, as $\widetilde{f}$ is $y^{-1}$-related to $g$ and as $c: [S_0, (S_0+\varepsilon)\wedge T] \to y(K)$ is an integral curve of $g$ driven by $X$, we have by \autoref{thm:related-fields} that the function $Y: [S_0, (S_0+\varepsilon)\wedge T] \to K$ defined by $Y:=y^{-1}\circ c$ is an integral curve of $\widetilde{f}$ driven by $X$ that satisfies $Y(S_0)=Y_0$. Since $\widetilde{f}=f$ on $K$, $Y$ is also an integral curve of $f$ driven by $X$.
\end{proof}

The local existence theorem is a special case of the above lemma. 

\begin{theorem}
\label{thm:M-local-existence}
Let $[S,T]\subseteq\Rsf$ be an interval and let $n\in\Nsf$. Let $p\in [1,3)$, let $\omega$ be a control function on interval $[S,T]$, and let $X$ be an $n$-dimensional branched rough path on interval $[S,T]$ with regularity $p$ and control $\omega$. Let $M$ be a manifold, let $f: M\to R_n^2M$ be an admissible rough velocity field, and let $Y_0\in M$. Then there exists $\varepsilon\in (0,\infty)$ and an integral curve $Y:[S,(S+\varepsilon)\wedge T]\to M$ of $f$ driven by $X$ that satisfies $Y(S)=Y_0$. 
\end{theorem}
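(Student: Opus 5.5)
The plan is to obtain this theorem as an immediate specialization of \autoref{thm:M-local-existence-1}. I will apply that lemma with the point $q:=Y_0$. It yields a constant $\varepsilon\in(0,\infty)$ and a neighborhood $U$ of $Y_0$ with the following property: for every initial point in $U$ and every starting time $S_0\in[S,T]$, there is an integral curve of $f$ driven by $X$ on $[S_0,(S_0+\varepsilon)\wedge T]$ that starts at that point.

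I then choose the initial point to be $Y_0$ itself, which lies in $U$ because $U$ is a neighborhood of $Y_0$, and I choose $S_0:=S$. This gives a function $Y:[S,(S+\varepsilon)\wedge T]\to M$ that is an integral curve of $f$ driven by $X$ and satisfies $Y(S)=Y_0$. That is exactly the assertion.

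One detail must be checked: the integral curve supplied by the lemma is defined relative to the restricted interval $[S,(S+\varepsilon)\wedge T]$, not all of $[S,T]$. Concretely, $X$ and $\omega$ are understood through their restrictions to this subinterval. These restrictions are still a branched rough path and a control function with the same regularity $p$ and the same constant $\|X\|$, since conditions \eqref{eq:rp-additive}--\eqref{eq:rp_regularity} are inherited by any subinterval. So the notion of integral curve in \autoref{def:integral-curve} applies verbatim on the subinterval.

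There is essentially no obstacle here; all the analytic work sits in \autoref{thm:M-local-existence-1}, and through it in \autoref{thm:R-existence}. The only point requiring care is the bookkeeping above: reading the conclusion of the lemma with $S_0=S$, and noting that the uniformity of $\varepsilon$ in the initial point is not needed for this special case.
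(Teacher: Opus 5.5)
Your proposal is correct and is exactly the paper's argument: the paper likewise obtains this theorem as the special case $q:=Y_0$, $S_0:=S$ of \autoref{thm:M-local-existence-1}. Your remark that $X$ and $\omega$ restrict to the subinterval with the same regularity and constant is a harmless piece of bookkeeping that the paper leaves implicit.
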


As the next example shows, one cannot expect global existence without additional assumptions.

\begin{example}
\label{ex:local-but-not-global-solution}
Let $X=(X^\tau)$ be the (non-geometric) $2$-dimensional branched rough path (with regularity $p=2$ and control function $\omega(s,t)=t-s$) defined by 
	\[
		X^{\tau}(s,t) := \begin{cases}
							t-s, & \quad \tau = \tcjd, \\
							0, & \quad \mbox{otherwise},
						\end{cases}
	\]
for $(s,t)\in \Delta[0,1]$. Let $f:\Rsf\to R_2^2 \Rsf$ be the (admissible but not compactly supported) rough velocity field defined by 
	\[
		f_{\tau}(y) := \begin{cases}
							y^2, & \quad \tau = \tcjd,\\
							-y^2, & \quad \tau = \tjd,\\
							0, & \quad \mbox{otherwise},
						\end{cases}
	\]
for $y\in \Rsf$. Then, by \autoref{thm:M-local-existence}, for every $Y_0 >0$, there exists $\varepsilon\in (0,\infty)$ and an integral curve $Y: [0, \varepsilon\wedge 1]\to \Rsf$ of $f$ driven by $X$ that satisfies $Y(0)=Y_0$. That is, we have that 
	\[ 
		Y(t) - Y(s) = Y(s)^2 (t-s) + r(s,t)	\]
holds for all $(s,t)\in \Delta [0,\varepsilon\wedge 1]$ where the remainder satisfies $|r(s,t)| \leq C(t-s)^\frac{3}{2}$ for a suitable constant $C\in (0,\infty)$ and every $(s,t)\in \Delta [0,\varepsilon\wedge 1]$, i.e.\ $Y$ is the solution to the ODE $\dot{Y}=Y^2$ on $[0,\varepsilon \wedge 1]$. It is well-known that this equation has a finite blow-up time $1/Y_0$.
\end{example}

In what follows, we prove that if the rough velocity field is compactly supported, then a global solution (i.e.\ one that is defined on the maximal possible interval) exists. To this end, we first show that if the rough velocity field is compactly supported, then we may take $\varepsilon$ in \autoref{thm:M-local-existence-1} independent of the initial condition. 

\begin{lemma}
\label{thm:M-local-existence-2}
Let $[S,T]\subseteq\Rsf$ be an interval, and let $n\in\Nsf$. Let $p\in [1,3)$, let $\omega$ be a control function on interval $[S,T]$, and let $X$ be an $n$-dimensional branched rough path on interval $[S,T]$ with regularity $p$ and control $\omega$. Let $M$ be a manifold and let $f:M\to R_n^2M$ be a compactly supported admissible rough velocity field. Then there exists $\varepsilon\in (0,\infty)$ such that for every $Y_0\in M$ and every $S_0\in [S,T]$, there exists an integral curve $Y: [S_0, (S_0+\varepsilon)\wedge t]\to M$ of $f$ driven by $X$ that satisfies $Y(S_0)=Y_0$.
\end{lemma}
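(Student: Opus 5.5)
The plan is a compactness argument over $\mathrm{supp}(f)$ using \autoref{thm:M-local-existence-1}, with the trivial constant curve for initial points away from the support. Write $K:=\mathrm{supp}(f)$, which is compact by assumption. For each $q\in K$, \autoref{thm:M-local-existence-1} gives $\varepsilon_q\in(0,\infty)$ and an open neighborhood $U_q$ of $q$ such that for every $Y_0\in U_q$ and every $S_0\in[S,T]$ there is an integral curve on $[S_0,(S_0+\varepsilon_q)\wedge T]$ starting at $Y_0$. By compactness, finitely many $U_{q_1},\dots,U_{q_N}$ cover $K$. Set $U:=\bigcup_k U_{q_k}$ and $\varepsilon:=\min_k \varepsilon_{q_k}>0$. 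Restricting a curve to a subinterval containing its initial time preserves the integral-curve property, since \eqref{eq:integral-curve} only has to hold on compact subintervals. Hence every $Y_0\in U$ admits an integral curve on $[S_0,(S_0+\varepsilon)\wedge T]$.

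It remains to treat $Y_0\notin U$. Such a point lies in the open set $M\setminus K$, where $f(p)=0_p$. I will take the constant curve $Y\equiv Y_0$. To verify it is an integral curve, I apply \autoref{thm:int-curve-local} with a single chart $(W,x)$ around $Y_0$ chosen with $W\subseteq M\setminus K$. In that chart, every coordinate $f^i_\tau$ vanishes on $W$, because $0_p$ has zero fiber coordinates in every chart. The left-hand side of \eqref{eq:integral-curve} is then identically zero. Condition (a) of \autoref{thm:int-curve-local} is met by taking $[u,v]$ to be the whole interval $[S_0,(S_0+\varepsilon)\wedge T]$, so that $v$ is its right endpoint.

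The main obstacle is a technical point about the domain. \autoref{thm:M-local-existence-1} is stated on $[S,T]$ with the roles of $S_0$ and $T$ fixed, and for the degenerate case $S_0=T$ the interval is a single point, which is trivial. A second concern is that \autoref{thm:int-curve-local} is formulated on $[S,T]$, whereas I apply it on the subinterval with $S_0,(S_0+\varepsilon)\wedge T$ in place of $S,T$, with the control $\omega$ restricted accordingly. Applying the definitions on that subinterval should settle both points. The expression $(S_0+\varepsilon)\wedge t$ in the statement is read as $(S_0+\varepsilon)\wedge T$.
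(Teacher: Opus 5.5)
Your proposal is correct and follows essentially the same route as the paper. Both cover the compact support $K$ by finitely many neighborhoods from \autoref{thm:M-local-existence-1}, take $\varepsilon:=\min_k \varepsilon_{q_k}$, and use the constant curve for initial points off the support; splitting by $U=\bigcup_k U_{q_k}$ rather than by $K$ makes no difference. Your extra check that the constant curve is an integral curve is fine, though it also follows directly from \autoref{def:integral-curve}, since $f(Y_0)=0_{Y_0}$ has vanishing fiber coordinates in every chart.
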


\begin{proof}
Let $K$ be the support of $f$. Then for each $q\in K$, there exists a neighborhood $U_q$ of $q$ and $\varepsilon_q\in (0,\infty)$ such that for each $Y_0\in U_q$ and each $S_0\in [S,T]$, there exists an integral curve of $f$ starting from $Y_0$ defined on $[S_0,(S_0+\varepsilon_q)\wedge T]$ by \autoref{thm:M-local-existence-1}. Since $\{U_q\}_{q\in K}$ is a cover of $K$, we can find finitely many points $q_1, q_2, \ldots, q_N$ ($N\in \Nsf$) such that $\{U_{q_k}\}_{k\in \llbracket 1,N \rrbracket}$ still cover $K$. Set $\varepsilon:= \min_{k\in \llbracket 1,N \rrbracket} \varepsilon_{q_k}$. Then for each $Y_0\in K$ and each $S_0\in [S,T]$, there is an integral curve $Y: [S_0, (S_0+\varepsilon)\wedge T]\to M$ of $f$ driven by $X$ starting from $Y_0$. For each $Y_0\not\in K$ and each $S_0\in [S,T]$, we simply set $Y:=Y_0$ to obtain an integral curve with the desired property.
\end{proof}

Now we may glue together the local integral curves obtained by the above lemma to produce a global integral curve. Because each of the local integral curves is guaranteed to be at least $\varepsilon$ long, it is possible to cover the whole interval $[S,T]$ in finitely many steps.

\begin{theorem}
\label{thm:M-global-existence}
Let $[S,T]\subseteq\Rsf$ be an interval, and let $n\in\Nsf$. Let $p\in [1,3)$, let $\omega$ be a control function on interval $[S,T]$, and let $X$ be an $n$-dimensional branched rough path on interval $[S,T]$ with regularity $p$ and control $\omega$. Let $M$ be a manifold and let $f:M\to R_n^2M$ be a compactly supported admissible rough velocity field. Then for each $Y_0\in M$, there exists an integral curve $Y: [S,T]\to M$ of $f$ driven by $X$ such that $Y(S)=Y_0$. 
\end{theorem}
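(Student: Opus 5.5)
The plan is to glue local integral curves, using \autoref{thm:M-local-existence-2} to get a uniform step size and \autoref{thm:int-curve-local} to verify that the glued curve is an integral curve. Fix $\varepsilon\in(0,\infty)$ as provided by \autoref{thm:M-local-existence-2}; it is independent of both the starting point and the starting time. Set $S_k:=(S+k\varepsilon)\wedge T$ for $k\in\llbracket 0,N\rrbracket$, where $N$ is the least integer with $S+N\varepsilon\geq T$. Then construct curves recursively:
\begin{enumerate}[label=(\roman*)]
\item start from $Y_0$ at time $S_0=S$ and obtain an integral curve $Y_{(0)}:[S_0,S_1]\to M$ of $f$;
\item given $Y_{(k-1)}$, apply \autoref{thm:M-local-existence-2} with starting time $S_k$ and initial point $Y_{(k-1)}(S_k)$ to obtain an integral curve $Y_{(k)}:[S_k,S_{k+1}]\to M$.
\end{enumerate}
Define $Y$ on $[S,T]$ by $Y:=Y_{(k)}$ on $[S_k,S_{k+1}]$. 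This is well defined and continuous, because consecutive pieces agree at the junction points $S_k$.

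Next I would check that $Y$ is an integral curve using the local criterion \autoref{thm:int-curve-local}. Let $r\in[S,T]$.
\begin{itemize}
\item If $r$ lies in the interior of some $[S_k,S_{k+1}]$, it suffices to take a chart around $Y(r)$ and a small subinterval $[u,v]$ on which $Y$ stays in the chart. The estimate~\eqref{eq:integral-curve} there follows from $Y_{(k)}$ being an integral curve.
\item If $r=S$ or $r=T$, the same argument works with one-sided intervals.
\end{itemize}
Condition (a) of \autoref{thm:int-curve-local} asks that either $v=T$ or $\omega(r,v)>0$. I would handle this as in the proof of \autoref{thm:related-fields}: choose $v$ as the exit time of $Y$ from a neighbourhood $\widetilde U$ of $Y(r)$ with closure in the chart domain. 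If then $\omega(r,v)=0$, the integral-curve expansion forces $Y(v)=Y(r)$, so $Y$ has not left $\widetilde U$ and hence $v=T$.

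The main obstacle is a junction point $r=S_k$ with $0<k<N$. There the expansion must hold for pairs $(s,t)$ with $s<S_k<t$, that is, across two different pieces. I would establish it with a two-interval Chen-type argument. Take a chart $(U,x)$ with $x(U)$ convex containing $Y([u,v])$, where $u<S_k<v$ are close to $S_k$. Write $K(s,t)$ for the left-hand side of~\eqref{eq:integral-curve}. The bound $|K(s,S_k)|,|K(S_k,t)|\lesssim\omega^{3/p}$ holds on each side, since each piece is an integral curve in this chart by \autoref{thm:ch-var}. The decomposition~\eqref{eq:K-split} from the proof of \autoref{thm:dJ-estimate} uses only admissibility and Chen's relations, and gives
\[
|K(s,t)-K(s,S_k)-K(S_k,t)|\lesssim |K(s,S_k)|^2\omega^{1/p}+|K(s,S_k)|\omega^{1/p}+\omega(s,t)^{3/p}.
\]
Two points need care here. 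First, \autoref{thm:dJ-estimate} is stated for globally defined compactly supported $f$ on $\Rsf^m$, so I would first replace $f$ by a rescaled, extended field $g$ on $\Rsf^m$ that agrees with $f$ near $Y([u,v])$, exactly as in the proofs of \autoref{thm:M-uniqueness} and \autoref{thm:M-local-existence-1}. Second, the lemma requires $\omega(s,t)\leq 1$, which is handled by choosing $u,v$ close to $S_k$. Combining these bounds gives $|K(s,t)|\lesssim\omega(s,t)^{3/p}$ for all $(s,t)\in\Delta[u,v]$. Then \autoref{thm:int-curve-local} applies and shows that $Y$ is an integral curve of $f$ driven by $X$ with $Y(S)=Y_0$.
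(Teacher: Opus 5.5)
Your proposal is correct, but it handles the junctions differently from the paper.

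\textbf{The paper's route.} The paper uses overlapping pieces. Each $y_k$ lives on $[S_k,(S_k+\varepsilon)\wedge T]$, while the next start is $S_{k+1}=(S_k+\varepsilon/2)\wedge T$. So near a junction $S_k$, both $y_{k-1}$ and $y_k$ are defined on $[S_k,S_{k+1}]$, and \autoref{thm:M-uniqueness} forces them to coincide there. Hence $Y=y_{k-1}$ on a whole neighbourhood of $S_k$, and no estimate across a junction is ever needed.

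\textbf{Your route.} You use abutting pieces and prove a concatenation property directly. The decomposition \eqref{eq:K-split} is purely algebraic (Chen's relations plus admissibility). Splitting at $u=S_k$ and inserting $|K(s,S_k)|\lesssim\omega(s,S_k)^{3/p}$ and $|K(S_k,t)|\lesssim\omega(S_k,t)^{3/p}$ into \autoref{thm:dJ-estimate} (for the localized, extended field) gives $|K(s,t)|\lesssim\omega(s,t)^{3/p}$ without any sewing.

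\textbf{What each buys.} The paper's route is shorter, given that uniqueness is already available. Yours does not use uniqueness at all; in the paper, uniqueness rests on the continuous sewing lemma and $C^3$ bounds, whereas you need only $C^2$ Taylor bounds. It also gives the independently useful fact that two integral curves agreeing at the junction concatenate to an integral curve.

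\textbf{A bookkeeping point.} Condition (a) of \autoref{thm:int-curve-local} can force the interval $[u,v]$ around a point $r$ inside a piece to run past the next junction. This happens when $\omega(r,\cdot)$ vanishes up to that junction. So the claim that interior points follow from $Y_{(k)}$ being an integral curve is not quite enough on its own. This is harmless. On such a flat stretch $Y$ is constant, so the image stays in the chart, and your junction estimate covers the crossing. The cleanest way to organize this is an induction on $k$ with a two-piece concatenation lemma, so that only one junction is ever involved.
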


\begin{proof}
Let $\varepsilon$ be as in \autoref{thm:M-local-existence-2}. Let us define a sequence of points $\{q_k\}$, curves $\{y_k\}$, and times $\{S_k\}$ recursively as follows: Let $S_0:=S$ and $q_0:=Y_0$. Given $q_k\in M$ and $S_k\in [S,T]$, let $y_k: [S_k,(S_k+\varepsilon)\wedge T]\to M$ be an integral curve of $f$ driven by $X$ such that $y_k(S_k)=q_k$, let $S_{k+1}:=(S_k+\varepsilon/2)\wedge T$, and let $q_{k+1}:=y_k(S_{k+1})$. Let $N$ be the smallest integer such that $S_N=T$. Finally, define $Y:[S,T]\to M$ to be equal to $y_k$ on $[S_k,S_{k+1}]$ for $k\in \llbracket 0,N-1 \rrbracket$. 

We now show that $Y$ is an integral curve of $f$ driven by $X$. Note first that $Y$ is clearly continuous. Now, let $r\in [S,T]$.

If $Y$ is equal to $y_k$ in a neighborhood $D$ of $r$, then, because $y_k$ is an integral curve, we can take any chart $(U,x)$ around $Y(r)$ and any compact interval $I\subseteq D\cap Y^{-1}(U)$ containing $Y(r)$ in its interior and we have that there exists $C\in (0,\infty)$ such that the inequality
	\[
		\left|
			Y^i(t) - Y^i(s) 
			- \sum_{\tau\in\Fcal_n^2}\frac{1}{\sigma(\tau)} f^i_\tau(Y(s)) X^\tau(s,t)
		\right| 
		\leq 
		C\omega(s,t)^\frac{3}{p},
	\]
where $(Y^i)$ is the coordinate representation of $Y$ in chart $(U,x)$ and $(f^i_\tau)$ is the coordinate representation of $f$ in the induced chart $(\overline{U},\overline{x})$ on $R_n^2M$, holds for every $i\in\llbracket 1,m \rrbracket$ and $(s,t)\in \Delta I$.

If $r$ is equal to $S$ or $T$, or if $r\in(S_k, S_{k+1})$ for some $k\in\llbracket 0,N-1 \rrbracket$, then the neighborhood $D$ of $r$ on which $Y=y_k$ holds clearly exists by construction of $Y$. If $r=S_k$ for some $k\in \llbracket 1,N-1 \rrbracket$, then by its construction, $Y$ is equal to $y_{k-1}$ on $[S_{k-1},r]$ and equal to $y_k$ on $[r, S_{k+1}]$. But $y_{k-1}$ is defined on the whole interval $[S_{k-1}, S_{k+1}]$ so that its restriction to $[r,S_{k+1}]$ is an integral curve of $f$ driven by $X$ starting at $y_{k-1}(r)$. On the other hand, the restriction of $y_{k}$ to interval $ [r, S_{k+1}]$ is also an integral curve of $f$ driven by $X$ starting at $y_{k-1}(r)$ so that by \autoref{thm:M-uniqueness}, we have that $y_{k}=y_{k-1}$ on $[r, S_{k+1}]$. Hence, $Y=y_{k-1}$ holds on $[S_{k-1}, S_{k+1}]$ and, in particular, on a neighborhood $D$ of $r$.
\end{proof}

\section{Invariance of submanifolds for RDEs}
\label{sec:invariance}

In what follows, we give a characterization of invariant submanifolds for solutions to RDEs. Recall that if $M$ and $M'$ are two manifolds such that $M$ is properly embedded in $M'$ and if $\phi:M\to M'$ is the embedding, then by \autoref{thm:r2-preserve-embedding}, the mapping $R_n^2\phi: R_n^2M\to R_n^2M'$ defined by $[(j_0^2\psi,f)]\mapsto [(j_{\psi(0)}^2 (\phi\circ \psi),f)]$, is also an embedding. 

Roughly speaking, the following theorem says that if $f$ is a rough velocity field on manifold $M'$, then any submanifold $M\subseteq M'$ on which $f$ takes values in $R_n^2M\subseteq R_n^2M'$ is invariant for an RDE corresponding to $f$, i.e.\ if a solution to an RDE corresponding to $f$ starts in $M$, it stays in $M$. 

\begin{theorem}
\label{thm:invariance}
Let $n\in\Nsf$, let $M'$ be a manifold and let $f: M'\to R_n^2M'$ be a rough velocity field. Let also $M$ be a properly embedded submanifold of $M'$ with $\phi: M\to M'$ being the embedding. Then the following statements are equivalent:
	\begin{enumerate}[label=(\arabic*)]
	\itemsep0em
	\item\label{invariance-condition-1} It holds that $(f\circ\phi) (M)\subseteq \mathrm{Im}( R_n^2\phi )$.
	\item\label{invariance-condition-2} For every interval $[S,T]\subseteq\Rsf$, every $n$-dimensional branched rough path $X$ on interval $[S,T]$ with regularity $p\in [1,3)$ and control function $\omega$, and every integral curve $Y:[S,T]\to M'$ of $f$ driven by $X$ such that $Y(S)\in \phi(M)$, we have that $Y(t)\in \phi(M) $ for every $t\in [S,T]$.
	\end{enumerate}
\end{theorem}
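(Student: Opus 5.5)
The plan is to prove both implications in a slice chart, using the admissibility of $f$ (standing assumption) together with \autoref{thm:related-fields}, \autoref{thm:M-local-existence} and \autoref{thm:M-uniqueness}.

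Throughout, for $q\in M$ I fix charts $(U,x)$ around $q$ and $(V,y)$ around $\phi(q)$ as in the proof of \autoref{thm:r2-preserve-embedding}. In these charts $\phi$ reads $(x^1,\ldots,x^m)\mapsto(x^1,\ldots,x^m,0,\ldots,0)$ and $\phi(M)\cap V=\{y^{m+1}=\cdots=y^{m'}=0\}$. In the induced chart, $\mathrm{Im}(R^2_n\phi)\cap\pi^{-1}(V)$ is exactly the set where $y^i=0$ and $g^i_\tau=0$ for all $i>m$ and all $\tau$. Hence condition \ref{invariance-condition-1} says that the ``normal'' components $f^i_\tau$ ($i>m$) of $f$ vanish on $\phi(M)\cap V$ for every $\tau\in\Fcal^2_n$.

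\textbf{\ref{invariance-condition-1}$\Rightarrow$\ref{invariance-condition-2}.} First I set $g:=(R^2_n\phi)^{-1}\circ f\circ\phi$. This is a section of $R^2_nM$ and it is smooth, because $R^2_n\phi$ is an embedding by \autoref{thm:r2-preserve-embedding}; in the slice chart its components are $f^i_\tau|_{M}$ for $i\le m$. By construction $f$ is $\phi$-related to $g$.

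Next I check that $g$ is admissible. For $i\le m$, the term $\partial_j f^i_{\tb}\, f^j_{\ta}$ only involves $j\le m$ on $M$, since $f^j_{\ta}=0$ there for $j>m$. So the admissibility identity for $f$ restricted to $M$ is exactly the identity for $g$.

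Now let $Y$ be an integral curve of $f$ with $Y(S)\in\phi(M)$, and set $t^*:=\sup\{t: Y([S,t])\subseteq\phi(M)\}$. Since $M$ is properly embedded, $\phi(M)$ is closed, so $Y(t^*)\in\phi(M)$. Suppose $t^*<T$. By \autoref{thm:M-local-existence} there is an integral curve $c$ of $g$ on $[t^*,t^*+\varepsilon]$ with $\phi(c(t^*))=Y(t^*)$. By \autoref{thm:related-fields}, $\phi\circ c$ is an integral curve of $f$, and by \autoref{thm:M-uniqueness} applied on $[t^*,t^*+\varepsilon]$ it coincides with $Y$ there. This contradicts the definition of $t^*$, so $t^*=T$.

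\textbf{\ref{invariance-condition-2}$\Rightarrow$\ref{invariance-condition-1}.} I argue by contraposition, in two cases.

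\emph{Case 1: some first-order normal component is nonzero.} Suppose $f^i_{\ta}(q)\ne0$ for some $q\in\phi(M)\cap V$, some $i>m$ and some $\alpha$. I take the smooth driver $x^\alpha(t)=t$, with all other components zero, lifted canonically as in \autoref{ex:natural-lift} with $p=1$. \autoref{thm:M-local-existence} gives a local integral curve $Y$ with $Y(0)=q$. In the chart, $Y^i(t)=f^i_{\ta}(q)\,t+O(t^2)$, which is nonzero for small $t>0$. So $Y$ leaves $\phi(M)$.

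\emph{Case 2: all first-order normal components vanish on $\phi(M)$.} Then each $f_{\ta}$ is tangent to $\phi(M)$. Since $f^i_{\tb}$ ($i>m$) vanishes on the slice and $f_{\ta}$ is tangent to it, $\partial_j f^i_{\tb}\, f^j_{\ta}=0$ on $\phi(M)$. Admissibility then gives $f^i_{\tab}=-f^i_{\tc}$ there for $i>m$. So if \ref{invariance-condition-1} fails, some $f^i_{\tc}(q)\ne0$ with $i>m$ (possibly $\alpha=\beta$).

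For this pair $(\alpha,\beta)$, I use the rough path of \autoref{ex:local-but-not-global-solution}: $X^{\tc}(s,t)=t-s$, all other components zero, $p=2$. Chen's relation holds because all first levels vanish, and $X^{\tab}=X^{\ta}X^{\tb}=0$ is consistent. A local integral curve through $q$ then satisfies $Y^i(t)-Y^i(0)=f^i_{\tc}(q)\,t+O(t^{3/2})$, hence leaves $\phi(M)$.

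The main obstacle I expect is the bookkeeping in Case 2. One must verify that tangency of the first-order part on all of $M$ (not just at $q$) is what makes $\partial_j f^i_{\tb}\, f^j_{\ta}$ vanish. One must also check that the degenerate rough path really is a valid branched rough path, so that the nonzero $f_{\tc}$ normal component can be excited without also exciting $f_{\tab}$.
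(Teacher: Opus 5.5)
Your proposal is correct and follows essentially the same route as the paper. The forward direction uses the pulled-back field $g$, local existence, \autoref{thm:related-fields} and \autoref{thm:M-uniqueness}; the converse works in slice charts, with the canonical lift of $x^\alpha(t)=t$ for the first-order normal components and the pure-area rough path together with admissibility for the second-order ones. The differences are minor: you apply admissibility before the pure-area argument (the paper first shows $f^i_{\tc}=0$ and then deduces $f^i_{\tab}=0$), you check explicitly that $g$ is admissible, and you invoke \autoref{thm:M-local-existence} rather than the compact-support lemma \autoref{thm:M-local-existence-2}, which is in fact the more appropriate citation.
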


\begin{proof}
Let us prove the implication \ref{invariance-condition-1} $\implies$ \ref{invariance-condition-2} first. Suppose that \ref{invariance-condition-1} holds and let $[S,T]\subseteq\Rsf$ be an interval, $X$ be an $n$-dimensional branched rough path on interval $[S,T]$ with regularity $p\in [1,3)$ and control function $\omega$, and let $Y:[S,T]\to M'$ be an integral curve of $f$ driven by $X$ such that $Y(S)\in \phi(M)$. Suppose to the contrary of \ref{invariance-condition-2} that there exists $t\in (S,T]$ such that $Y(t)\not\in \phi(M)$ and let $s:=\inf\{u\in [S,T]: Y(u)\not\in \phi(M)\}$. We note that $Y(s)\in \phi(M)$. (This is because if $s=S$, then $Y(s)=Y(S)\in \phi(M)$ holds by assumption and if $s>S$, then $Y(s)\in \phi(M)$ holds since $Y$ is continuous and $\phi(M)$ is closed.) By \ref{invariance-condition-1}, the map $\phi\circ f$ can be understood to take values in $R_n^2M$ and so it gives rise to a rough velocity field $g:M\to R_n^2M$. This field is clearly admissible. It follows by \autoref{thm:M-local-existence-2}, that there exists $\varepsilon\in (0,\infty)$ and an integral curve $Z: [s,(s+\varepsilon)\wedge T]\to M$ of $g$ driven by $X$ satisfying $\phi(Z(s)) = Y(s)$. As $f$ is clearly $\phi$-related to $g$ (i.e.\ $f(\phi(p)) = (R_n^2\phi)(g(p))$ holds for every $p\in M$), it follows, by \autoref{thm:related-fields}, that the function $\phi\circ Z: [s,(s+\varepsilon)\wedge T]\to \phi(M) \subseteq M'$ is an integral curve of $f$ driven by $X$ starting from $Y(s)$. This means, by \autoref{thm:M-uniqueness}, that $Y=\phi\circ Z$ holds on $[s,(s+\varepsilon)\wedge T]$ but this cannot happen since $Y$ is not $\phi(M)$-valued on $(s,(s+\varepsilon)\wedge T]$ by the definition of $s$ while $\phi\circ Z$ is $\phi(M)$-valued on $[s,(s+\varepsilon)\wedge T]$ by its construction.

Let us prove the implication \ref{invariance-condition-2} $\implies$ \ref{invariance-condition-1} now. Recall that given a point $q\in M$, by the Rank Theorem, we may choose charts $(V,y)$ around $q$ and $(U,x)$ around $\phi(q)$ in which $\phi$ has the representation 
	\[ 
		\phi(x^1, x^2, \ldots, x^m) = (x^1, x^2, \ldots, x^m, \underbrace{0,0, \ldots, 0}_{(m'-m)-\times})
	\]
where $m$ and $m'$ are the dimensions of $M$ and $M'$, respectively. Let $(f^i_\tau)$ be the coordinate representation of $f$ in $(U,x)$. Suppose that for some $q_0\in V$, $i_0\in\llbracket m+1,m' \rrbracket$, and $\tau_0\in \Fcal_n^2$, we have $f^{i_0}_{\tau_0}(\phi(q_0))\neq 0$.

\emph{Step 1.} First, suppose that $\tau_0=\tazero$ for some $\alpha_0\in \llbracket 1,n \rrbracket$. Let $X = (X^\tau)$ be the branched rough path canonically built from the smooth path $x^{\alpha_0}(t) = t$ and $x^{\alpha}(t) = 0$ for $\alpha\neq \alpha_0$ and $t\in [0,1]$, i.e.\ we have
	\[
		X^\tau(s,t) := \begin{cases}
							t-s, & \quad \tau =\tazero, \\
							\frac{1}{2}(t-s)^2, & \quad \tau = \tvazero, \\
							(t-s)^2, & \quad \tau = \taazero,\\
							0, & \quad \mbox{otherwise},
						\end{cases}
	\]
for $(s,t)\in\Delta[0,1]$. (Such branched rough path $X$ has regularity $p=1$ and control function $\omega(s,t):= t-s$ for $(s,t)\in \Delta[0,1]$.) By \autoref{thm:M-local-existence}, there exists $\varepsilon\in (0,1)$ and an integral curve $Y: [0,\varepsilon] \to M'$ of $f$ driven by $X$ such that $Y(0)=\phi(q_0)$. Then, by using admissibility of $f$ and the fact that $Y^{i_0}(\phi(q_0))=0$, we have the existence of a constant $C\in (0,\infty)$ such that the inequality
\[
	\left|
		Y^{i_0}(t) - f^{i_0}_{\tazero}(\phi(q_0)) t 
		- \frac{1}{2} \left(\frac{\partial f^{i_0}_{\tazero}}{\partial x^j} f^j_{\tazero}\right)(\phi(q_0))t^2
	\right| 
	\leq C t^3
\]
holds for all sufficiently small $t$. However, since $f^{i_0}_{\tazero}(\phi(q_0))\neq 0$, this not possible unless $Y^{i_0}(t)\neq 0$ for such small $t$, i.e.\ unless $Y$ leaves $M$. But since this cannot happen by \ref{invariance-condition-2}, we must have that $f^i_{\ta} = 0$ on $U\cap\phi(V)$ for all $i\in \llbracket m+1, m' \rrbracket$ and $\alpha\in \llbracket 1,n \rrbracket$.

\emph{Step 2.} Second, suppose that $\tau=\tczero$ for some $\alpha_0,\beta_0\in\llbracket 1,n \rrbracket$. Let $X=(X^\tau)$ be the pure-area rough path on interval $[0,1]$, i.e.\ the branched rough path defined by
	\[
		X^\tau(s,t) = \begin{cases}
						t-s, & \quad \tau=\tczero,\\
						0, & \quad\mbox{otherwise},
					  \end{cases}
	\]
for $(s,t)\in\Delta[0,1]$. (Such branched rough path $X$ has regularity $p=2$ and control function $\omega(s,t) = t-s$ for $(s,t)\in \Delta[0,1]$.) Then, similarly as in \emph{Step 1}, by \autoref{thm:M-local-existence}, there exists $\varepsilon\in (0,1)$ and an integral curve $Y: [0,\varepsilon] \to M'$ of $f$ driven by $X$ such that $Y(0)=\phi(q_0)$. For this curve, there exists a constant $C\in (0,\infty)$ such that the inequality 
	\[
		\left|
			Y^{i_0}(t) - f^{i_0}_{\tczero}(\phi(q_0))t
		\right|
		\leq C t^\frac{3}{2}
	\]
holds for all sufficiently small $t$. This, again, is impossible unless $Y^{i_0}(t)\neq 0$ for such small $t$. But since this cannot happen by \ref{invariance-condition-2}, we must have that $f^i_{\tc}=0$ on $U\cap \phi(V)$ for all $i\in \llbracket m+1, m' \rrbracket$ and $\alpha, \beta\in \llbracket 1,n \rrbracket$. 

\emph{Step 3.} Finally, for $i\in \llbracket m+1, m' \rrbracket$, $\alpha,\beta\in\llbracket 1,n \rrbracket$, and $p\in U\cap \phi(V)$ we have 
	\[
		f^i_{\tab}(p) 
			= \frac{\partial f^i_{\tb}}{\partial x^j}(p)f^j_{\ta}(p)
				- f^i_{\tc}(p)
	\]
by admissibility of the rough velocity field $f$ and since $f^i_{\tc}(p)=0$ and $f^i_{\tb}(p)=0$ (and, therefore, also $\frac{\partial f^i_{\tb}}{\partial x^j}(p) = 0$ for $j\in \llbracket 1,m\rrbracket$) hold by \emph{Steps 1} and \emph{2}, we see that $f^i_{\tab}(p)=0$.
\end{proof}

In what follows, we provide a description of bundle $R_n^2M$ for a manifold $M$ specified as a zero-set of a function. Recall that zero-sets of smooth submersions are properly embedded; see, e.g., \cite[Corollary 5.13]{Lee12}. 

\begin{theorem}
\label{thm:zero-set}
Let $F: \Rsf^m\to \Rsf^{m'}$ be a smooth submersion and let $M$ be its zero-set, i.e.\ 
	\[ 
		M := \{ x\in\Rsf^m : F(x)=0 \}.
	\]
Then for $n\in\Nsf$, we have
	\[ 
		R_n^2M = \{ \bm{f}\in R_n^2\Rsf^m: (R_n^2 F)(\bm{f}) = 0 \}.
	\]
\end{theorem}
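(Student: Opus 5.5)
We identify $R_n^2M$ with its image under $R_n^2\iota$, where $\iota:M\to\Rsf^m$ is the inclusion; this is an embedded submanifold by \autoref{thm:r2-preserve-embedding}. The equality $(R_n^2F)(\bm{f})=0$ is read as $(R_n^2F)(\bm{f})=0_0$, the distinguished zero element of the fibre over $0\in\Rsf^{m'}$. Both sides of the claimed equality lie over points of $\Rsf^m$. If $(R_n^2F)(\bm f)=0_0$, then its base point $F(\pi\bm f)$ equals $0$, so $\pi\bm f\in M$. Hence it suffices to fix $p\in M$ and compare the fibres over $p$. The tool throughout is the coordinate formula for $R_n^2\phi$ stated just before \autoref{thm:r2-functorial}, together with functoriality.

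For the inclusion ``$\subseteq$'', take $\bm f = R_n^2\iota(\bm g)$ with $\bm g\in R_n^2M$. By \autoref{thm:r2-functorial}, $R_n^2F(\bm f)=R_n^2(F\circ\iota)(\bm g)$. The map $F\circ\iota$ is the constant map $0$, so all its first and second derivatives vanish. The coordinate formula then gives $g_{\ta}=g_{\tab}=g_{\tc}=0$, that is, the image is $0_0$.

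For ``$\supseteq$'', I would use the Rank Theorem. Since $F$ is a submersion, around $p$ there is a chart $(U,x)$ of $\Rsf^m$ in which $F$ reads $(x^1,\dots,x^m)\mapsto(x^{m-m'+1},\dots,x^m)$. I then compose with the identity chart on $\Rsf^{m'}$; after a linear change of target coordinates this is harmless, because $0_0$ is invariant under the $G^2_{m'}$-action. In these coordinates the second derivatives of $F$ vanish, so the coordinate formula gives $(R_n^2F)(\bm f)^i_\tau = f^{m-m'+i}_\tau$ for every $i$ and every $\tau$. The condition $(R_n^2F)(\bm f)=0_0$ therefore says exactly that the last $m'$ components $f^j_\tau$ vanish. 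In the same chart $M\cap U$ is the slice $\{x^{m-m'+1}=\dots=x^m=0\}$, and the proof of \autoref{thm:r2-preserve-embedding} shows that, in the induced chart, $R_n^2\iota(R_n^2M)$ over $M\cap U$ is precisely the set where those same components vanish. Hence the two sets coincide over $U$. Covering $M$ by such charts gives the equality.

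The main obstacle I expect is the bookkeeping in the second inclusion. One must check that the flattening chart for $F$ simultaneously realizes $M$ as a coordinate slice compatible with the embedding coordinates used in \autoref{thm:r2-preserve-embedding}; up to reordering coordinates it does. One must also check that the target chart may be chosen so that $F$ has no second-order part. Because $F$ is already linear in the chosen coordinates, the term $\partial^2F\, f_{\ta}f_{\tb}$ disappears, and no admissibility assumption is needed. Once this is arranged, the remaining identification is immediate from the coordinate formulas.
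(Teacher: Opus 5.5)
Your argument is correct. The inclusion $R_n^2M\subseteq\{\bm f: R_n^2F(\bm f)=0\}$ via functoriality, and the observation $F(\pi\bm f)=0$, are exactly as in the paper. The difference lies in the reverse inclusion.

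\textbf{The paper's route.} The paper does not straighten $F$. It works in the global chart of $\Rsf^m$ and an arbitrary chart on $M$ around $p$. It writes $(a^i_j,a^i_{jk})$ for the $2$-jet of the embedding and $(b^j_k,b^j_{k\ell})$ for that of $F$, and finds the preimage fibre coordinates $g$ by pointwise linear algebra:
\begin{enumerate}
\item The equality $\mathrm{Im}\,a=\mathrm{Ker}\,b$, which is a dimension count using that $F$ is a submersion, yields $g_{\ta}$ and $g_{\tc}$.
\item Differentiating $F\circ\phi=0$ twice gives $b^j_ra^r_{k\ell}+b^j_{pq}a^p_ka^q_\ell=0$. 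This shows that $f_{\tab}-a_{pq}g^p_{\ta}g^q_{\tb}$ lies in $\mathrm{Ker}\,b=\mathrm{Im}\,a$, so $g_{\tab}$ exists.
\end{enumerate}

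\textbf{Your route.} You use the submersion normal form, so that a single chart both flattens $F$ to a linear projection and exhibits $M$ as a coordinate slice. Then neither $R_n^2F$ nor $R_n^2\iota$ has second-order terms. The condition reduces to the vanishing of the last $m'$ fibre coordinates, which matches the slice description from the proof of \autoref{thm:r2-preserve-embedding}.

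\textbf{Comparison.} Your version is shorter and more conceptual. It relies on naturality (chart-independence of $R_n^2$ and of the zero element $0_q$) to remove the second-order jet interaction altogether. The cost is invoking the inverse function theorem and the chart bookkeeping you mention. The paper's version is pointwise and stays in ambient Euclidean coordinates. It makes explicit how the second-order term $a^i_{pq}g^p_{\ta}g^q_{\tb}$ is compensated, which is the form in which the criterion is used in the circle example.

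A small point: for a submersion you can take the target chart to be the identity from the outset. Complete $F$ to a chart $(\ell,F)$, with $\ell$ linear and $d(\ell,F)_p$ invertible. So your remark about a linear change of target coordinates is unnecessary, though harmless.
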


\begin{proof}
Denote 
	\[ 
		A:= \{\bm{f}\in R_n^2\Rsf^m: (R_n^2F)(\bm{f}) = 0\}
	\]
and let $\phi: M\to\Rsf^m$ be the embedding of $M$ into $\Rsf^m$. 

First note that the inclusion $R_n^2 M\subseteq A$ follows from the equality
	\[ 
		((R_n^2F) \circ (R_n^2\phi))(\bm{f}) = (R_n^2 (F\circ \phi))(\bm{f}) = 0
	\]
that holds for all $\bm{f}\in R_n^2M$ by using \autoref{thm:r2-functorial} and the fact that we have  $F\circ\phi =0$ by definition. 

To prove that the inclusion $A\subseteq R_n^2M$ also holds, let $\bm{f}\in A$ and denote its coordinates in the global chart by $(y^i,f^i_\tau)$. We have that $F(\pi(\bm{f})) = \pi ((R_n^2 F)(\bm{f}))=0$ so that $\pi(\bm{f})=\phi(p)$ for some $p\in M$. (Recall that $\pi$ is the bundle projection map.) Choose a chart $(U,x)$ around $p$ and let 
	\[
		a_j^i := \frac{\partial\phi^i}{\partial x^j}(p), 
			\quad 
		a^i_{jk} := \frac{\partial^2 \phi^i}{\partial x^j\partial x^k}(p), 
			\quad (i,j,k\in \llbracket 1,m \rrbracket)
	\]
and 
	\[
		b_k^j := \frac{\partial F^j}{\partial y^k}(\phi(p)), 
			\quad 
		b_{k\ell}^j := \frac{\partial^2 F^j}{\partial y^k\partial y^\ell}(\phi(p))
			\quad (j\in \llbracket 1,m' \rrbracket, \, k,\ell\in \llbracket 1,m \rrbracket).
	\]
We aim to find $(g^j_\tau)$ such that $R_n^2\phi$ maps the point in $R_n^2M$ with coordinates $(x^j(p), g^j_\tau)$ to $\bm{f}$, i.e.\ we aim to find $(g^j_\tau)$ such that 
	\begin{equation}
	\label{eq:g-cond}
		f^i_{\ta} = a^i_jg^j_{\ta}, 
			\quad 
		f^i_{\tc} = a^i_j g^j_{\tc}, 
			\quad \mbox{and}\quad 
		f^i_{\tab} = a^i_j g^j_{\tab} + a^i_{jk}g^j_{\ta} g^k_{\tb}
	\end{equation}
hold for all $i\in\llbracket 1,m \rrbracket$ and $\alpha,\beta\in\llbracket 1,n \rrbracket$. As $\bm{f}\in A$, we have that 
	\begin{equation}
	\label{eq:f-assumption}
		b_k^j f^k_{\ta}=0, 
			\quad 
		b^j_kf^k_{\tc} = 0, 
			\quad \mbox{and}\quad 
		b^j_k f^k_{\tab} + b^j_{rs}f^r_{\ta} f^s_{\tb} = 0
	\end{equation}
hold for every $j\in\llbracket 1,m' \rrbracket$ and every $\alpha, \beta\in\llbracket 1,n \rrbracket$. Let $a$ be the matrix with components $(a^i_j)$ and $b$ the matrix with components $(b^j_k)$. Then because $\mathrm{Im}\, a\subseteq\mathrm{Ker}\, b$ and because these spaces have the same dimension $(m-m')$, we have the following: For every $(f^i)\in\Rsf^m$ such that $b^j_k f^k=0$ holds for every $j\in \llbracket 1,m' \rrbracket$, there exists $(g^j)\in \Rsf^{m-m'}$ such that $a_j^ig^j = f^i$ for every $i\in \llbracket 1,m \rrbracket$. It follows from this fact and the first two equations in~\eqref{eq:f-assumption}, that $(g^j_{\ta})$ and $(g^j_{\tc})$ satisfying the first two equations in~\eqref{eq:g-cond} exist. Now, by the chain rule, we have that
	\[ 
		 b^j_r a^r_{k\ell} + b^j_{pq}a_k^p a^q_{\ell} = 0
	\]
holds for every $j\in\llbracket 1,m' \rrbracket$ and $k,\ell\in \llbracket 1,m \rrbracket$, so that 
	\[
    	b^{j}_{k}\left(
    				f^{k}_{\tab} - a^{k}_{pq}g^{p}_{\ta}g^{q}_{\tb}
    			  \right)
    	  = b^{j}_{k}f^{k}_{\tab} - b^{j}_{k} a^k_{pq} g^{p}_{\ta} g^q_{\tb} 
  		  = b^{j}_{k}f^{k}_{\tab} + b^{j}_{rs}a^{p}_{r}a^{q}_{s} g^{p}_{\ta}g^{q}_{\tb}
  		  = b^{j}_{k}f^{k}_{\tab} + b^{j}_{rs}f^{r}_{\ta}f^{s}_{\tb} = 0
  	\]
holds for every $j\in\llbracket 1,m' \rrbracket$ by the third equation in~\eqref{eq:f-assumption}. Thus we can find $(g^j_{\tab})$ such that 
	\[
		a_j^ig^j_{\tab} = f^i_{\tab} - a^i_{pq}g^p_{\ta} g^q_{\tb}
	\]
holds for every $i\in\llbracket 1,m \rrbracket$ and every $\alpha,\beta\in \llbracket 1,n \rrbracket$ which implies the third equation in~\eqref{eq:g-cond}.
\end{proof}

\begin{example} 
Consider the vector field $f: (y^1,y^2)\mapsto (-y^2,y^1)$ that is tangent to circles of the form $M:=\{y\in\Rsf^2: F(y)=0\}$ for $F(y^1,y^2):= (y^1)^2 + (y^2)^2 - r^2$ where $r \in (0,\infty)$. The rough velocity field $(f^i_\tau)$ associated with $f$ via equations~\eqref{eq:canonical-rvf} is given by 
	\[
	\begin{array}{lcl}
		  f^1_{\tao}(y^1,y^2) = - y^2, & & f^2_{\tao} (y^1,y^2) = y^1, \\
		  f^1_{\taboo}(y^1,y^2) = 0,   & & f^2_{\taboo}(y^1,y^2) = 0,\\
		  f^1_{\tcoo}(y^1,y^2) = -y^1, & & f^2_{\tcoo}(y^1,y^2) = - y^2
	\end{array}
	\]
and it is clearly admissible. Let $g:= (R_1^2F)(f)$ and compute its components:
	\begin{align*}
			g_{\tao}(y^1,y^2) 
				& = \left(
						\frac{\partial F}{\partial y^1} f^1_{\tao}
					\right)(y^1,y^2) 
					+ \left(
						\frac{\partial F}{\partial y^2} f^2_{\tao}
					  \right)(y^1,y^2) 
				  = -2y^1y^2 + 2y^2y^1 
				  = 0, \\
			g_{\taboo}(y^1,y^2) 
				& = \left(
						\frac{\partial F}{\partial y^j}f^j_{\taboo}
					\right)(y^1,y^2) 
					+ \left(
						\frac{\partial^2 F}{\partial y^jy^k}f^j_{\tao}f^k_{\tao}
					  \right)(y^1,y^2) 
				  = 2(y^2)^2 + 2(y^1)^2,\\
			g_{\tcoo}(y^1,y^2) 
				& = \left(
						\frac{\partial F}{\partial y^1} f^1_{\tcoo}
					\right)(y^1,y^2) 
					+ \left(
						\frac{\partial F}{\partial y^2} f^2_{\tcoo}
					  \right)(y^1,y^2) 
				  = -2(y^1)^2 - 2(y^2)^2.
	\end{align*}
We see that not all the components of $g$ are zero and therefore, by \autoref{thm:zero-set}, the rough velocity field $(f^i_{\tau})$ when considered on $M$ does not take values in $R_1^2M$. However, if we consider the rough velocity field $(\widetilde{f}^i_\tau)$ given by 
	\[
	\begin{array}{lcl}
		  \widetilde{f}^1_{\tao}(y^1,y^2) := - y^2, & & \widetilde{f}^2_{\tao} (y^1,y^2) := y^1, \\
		  \widetilde{f}^1_{\taboo}(y^1,y^2) := -y^1,   & & \widetilde{f}^2_{\taboo}(y^1,y^2) := -y^2,\\
		  \widetilde{f}^1_{\tcoo}(y^1,y^2) := 0, & & \widetilde{f}^2_{\tcoo}(y^1,y^2) := 0, 
	\end{array}
	\]
then by performing the same calculations as above, it can be seen that this rough velocity field when considered on $M$ does take values in $R_1^2M$. It follows by \autoref{thm:invariance} that $M$ will not be invariant for any integral curve of $(f^i_\tau)$ but it will be invariant for any integral curve of $(\widetilde{f}^i_\tau)$.

To put these results into perspective, we note that if $X=(X^\tau)$ is a scalar branched rough path, then the integral curve $Y$ of $(f^i_\tau)$ driven by $X$ can be locally approximated as
	\[ 
		Y(t) \approx Y(s) + f(Y(s)) X^{\tao}(s,t) - Y(s)X^{\tcoo}(s,t) 
	\]
and the integral curve $\widetilde{Y}$ of $(\widetilde{f}^i_\tau)$ driven by $X$ can be locally approximated as 
	\[
		\widetilde{Y}(t) \approx \widetilde{Y}(s) + f(\widetilde{Y}(s))X^{\tao}(s,t)  - \frac{1}{2} \widetilde{Y}(s) [X](s,t) - \widetilde{Y}(s) X^{\tcoo}(s,t)
	\]
where $[X]:= X^{\taboo} - 2X^{\tcoo}$ (i.e.\ $[X]$ denotes the \emph{bracket} of the rough path $X$). In the particular case when $X$ is the It\^o rough path lift of a continuous local martingale $x$, these approximations mean that $Y$ is the (rough-path) solution to the It\^o SDE
	\[ 
		\d{Y} = f(Y)\d{x}
	\]
while $\widetilde{Y}$ solves the It\^o SDE
	\[
		\d{\widetilde{Y}} = f(\widetilde{Y})\d{x} - \frac{1}{2} \widetilde{Y}\d\langle x\rangle
	\]
(in this case, $[X]$ coincides with the quadratic variation $\langle x\rangle$ of $x$). Using the standard flat connection, the above It\^o SDE can be rewritten to the Stratonovich SDE 
	\[ 
		\d\widetilde{Y} = f(\widetilde{Y})\circ \d{x}
	\]
and we can immediately apply the classical criteria (see, e.g., \cite{AubDaP90,Mil97}) to see that our results are consistent with the known results for SDEs.
\end{example}

\section{Concluding remark}

In the article, we developed a novel framework based on natural bundles in which RDEs on manifolds driven by branched rough paths of regularity $p\in [1,3)$ can be treated. We believe that the extension of our results to the case of rough paths of arbitrarily low regularity $p\in [1,\infty)$ is possible but it would be technically demanding. In order to do so, one would first replace $\mathcal{F}_n^2$ by the space $\mathcal{F}_n^k$ of forests of rooted trees with at most $k:=\lfloor p\rfloor$ vertices. The notions of rough velocities, jet-rough velocity composition, rough velocity bundles, and rough velocity fields as well as that of integral curves would then be extended to this case analogously as in \autoref{sec:rvf} and \autoref{sec:RDEs}, respectively. A higher-order admissibility condition would also appear but it seems that a nontrivial abstract algebraic framework would have to be developed to characterize this condition in full generality. Additionally, the rescaling operation, compatible with the higher-order admissibility condition, would need to be introduced to allow for gluing solutions to RDEs constructed in coordinate patches.

\end{document}